\theoremstyle{plain}
\newtheorem{thm}{Theorem}[section]
\newtheorem*{thm*}{Theorem}
\newtheorem{prop}[thm]{Proposition}
\newtheorem{cor}[thm]{Corollary}       
\newtheorem{lem}[thm]{Lemma}
\newtheorem{que}[thm]{Question}
\theoremstyle{definition} 
\newtheorem{defn}[thm]{Definition} 
\newtheorem{ex}[thm]{Example}
\newtheorem{rem}[thm]{Remark}
\newcommand{\msf}[1]{\mathsf{#1}}
\newcommand{\mrm}[1]{\mathrm{#1}}
\newcommand{\mbb}[1]{\mathbb{#1}}
\newcommand{\p}{\mathfrak{p}}
\newcommand{\q}{\mathfrak{q}}
\newcommand{\m}{\mathfrak{m}}
\newcommand{\V}{\mathcal{V}}
\renewcommand{\k}{\Bbbk}
\newcommand{\1}{\mathds{1}}
\newcommand{\G}{\mathcal{G}}
\newcommand{\downcl}{\land}
\newcommand{\upcl}{\lor\!}
\author{Liran Shaul}
\address[Shaul]{Department of Algebra, Faculty of Mathematics and Physics, Charles University in Prague, Sokolovsk\'{a} 83, 186 75 Praha, Czech Republic}
\email{shaul@karlin.mff.cuni.cz}
\author{Jordan Williamson}
\address[Williamson]{Department of Algebra, Faculty of Mathematics and Physics, Charles University in Prague, Sokolovsk\'{a} 83, 186 75 Praha, Czech Republic}
\email{williamson@karlin.mff.cuni.cz}
\begin{document}

\title{Lifting (co)stratifications between tensor triangulated categories}% with applications to commutative DG-rings}
\maketitle 
\begin{abstract}
We give necessary and sufficient conditions for stratification and costratification to descend along a coproduct preserving, tensor-exact $R$-linear functor between $R$-linear tensor-triangulated categories which are rigidly-compactly generated by their tensor units. We then apply these results to non-positive commutative DG-rings and connective ring spectra. In particular, this gives a support-theoretic classification of (co)localizing subcategories, and thick subcategories of compact objects of the derived category of a non-positive commutative DG-ring with finite amplitude,
and provides a formal justification for the principle that the space associated to an eventually coconnective derived scheme is its underlying classical scheme.   
For a non-positive commutative DG-ring $A$, we also investigate whether certain finiteness conditions in $\msf{D}(A)$ (for example, proxy-smallness) can be reduced to questions in the better understood category $\msf{D}(H^0A)$.
\end{abstract}

\section{Introduction}
Given a triangulated category $\msf{T}$, it is generally hopeless to ask for a complete classification of its objects up to isomorphism. Instead, one can ask for a classification of its objects up to extensions, retracts and (co)products, and developing a support theory for $\msf{T}$ allows one to approach such classifications. This idea of classifying the thick, localizing and colocalizing subcategories of  triangulated categories began in chromatic homotopy theory with the classification of the thick subcategories of compact objects in the stable homotopy category by Hopkins-Smith~\cite{HopkinsSmith}. Hopkins~\cite{Hopkins} then transported this idea into algebra, providing a classification of the thick subcategories of the perfect complexes in the derived category $\msf{D}(R)$ of a commutative noetherian ring, and Neeman~\cite{Neeman1992, Neeman2011} extended this to a classification of (co)localizing subcategories of $\msf{D}(R)$. 

Benson-Iyengar-Krause~\cite{BIK08, BIK11b, BIK11, BIK12} then developed an abstract support theory for triangulated categories with the action of a ring $R$, leading to a notion of (co)stratification which provides a classification of thick, localizing and colocalizing subcategories, together with many more important consequences. They then applied this theory in the modular representation theory of finite groups~\cite{BIK11b}. 

In this paper, we investigate when stratification and costratification descend along an exact functor of tensor-triangulated categories. Given a strong symmetric monoidal, exact, coproduct-preserving functor $f^*\colon \msf{T} \to \msf{U}$ between rigidly-compactly generated tensor-triangulated categories, it follows that $f^*$ has a right adjoint $f_*$ which itself has a right adjoint $f^{(1)}$, see~\cite{BDS16}. If a ring $R$ acts on both $\msf{T}$ and $\msf{U}$ and the functor $f^*$ is suitably compatible with this action, then one can ask under what conditions $\msf{U}$ being (co)stratified by $R$ implies that $\msf{T}$ is (co)stratified by $R$.

Our first main result is the following which gives a sufficient condition for stratification and costratification to descend. Infact, under an additional mild hypothesis (which holds in the examples we study), we prove that the condition is also necessary. We direct the reader to Theorems~\ref{thm:stratdescends} and~\ref{thm:costratdescends}, and Corollaries~\ref{cor:equiv-stra} and~\ref{cor:equiv-costra} for a more detailed statement of the following main result. In this introduction we actually only state special cases of our main results, in the setting where the categories are generated by their tensor units.
\begin{thm*}
Let $f^*\colon \msf{T} \to \msf{U}$ be an exact, strong symmetric monoidal, coproduct-preserving functor between $R$-linear tensor-triangulated categories which are rigidly-compactly generated by their tensor units. Suppose that $\msf{U}$ is stratified (resp., costratified) by $R$.
\begin{enumerate}
\item If $f_*\1_\msf{U}$ builds $\1_\msf{T}$ (that is, $\1_\msf{T} \in \mrm{Loc}_\msf{T}(f_*\1_\msf{U})$), then $\msf{T}$ is stratified (resp., costratified) by the action of $R$. 
\item Moreover, if $\mrm{supp}_\msf{T}(\1_\msf{T}) \subseteq \mrm{supp}_\msf{U}(\1_\msf{U})$, then the converse holds; namely, $\msf{T}$ is stratified (resp., costratified) by $R$ if and only if $f_*\1_\msf{U}$ builds $\1_\msf{T}$.
\end{enumerate}
\end{thm*}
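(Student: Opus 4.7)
The plan hinges on two structural observations. First, the projection formula (valid since $f^*$ is strong monoidal with right adjoint $f_*$) gives $f_*f^*Y \cong Y \otimes f_*\1_\msf{U}$ for every $Y \in \msf{T}$. Combined with the hypothesis $\1_\msf{T} \in \mrm{Loc}_\msf{T}(f_*\1_\msf{U})$, tensoring with $Y$ (an exact, coproduct-preserving operation) yields $Y \in \mrm{Loc}_\msf{T}(f_*f^*Y)$; conversely, $f_*f^*Y = Y \otimes f_*\1_\msf{U} \in \mrm{Loc}_\msf{T}(Y)$ because $Y \otimes -$ carries $\mrm{Loc}_\msf{T}(\1_\msf{T}) = \msf{T}$ into $\mrm{Loc}_\msf{T}(Y)$, so $\mrm{Loc}_\msf{T}(Y) = \mrm{Loc}_\msf{T}(f_*f^*Y)$ and in particular $f^*$ is conservative. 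Second, since $\1_\msf{U}$ is a compact generator of $\msf{U}$ and $f^*\1_\msf{T} \cong \1_\msf{U}$, the adjunction $\mrm{Hom}_\msf{U}(\1_\msf{U}, \Sigma^n X) \cong \mrm{Hom}_\msf{T}(\1_\msf{T}, \Sigma^n f_*X)$ forces $f_*$ to be conservative as well. Strong monoidality together with the internal Hom adjunction $f_*\mrm{Hom}_\msf{U}(f^*A, B) \cong \mrm{Hom}_\msf{T}(A, f_*B)$ gives the commutation isomorphisms $f^*\Gamma_\p \cong \Gamma_\p f^*$, $f_*\Gamma_\p \cong \Gamma_\p f_*$, and $f_*\Lambda^\p \cong \Lambda^\p f_*$; finally $f_*$ preserves all small coproducts since it itself has a right adjoint $f^{(1)}$.

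Part (1) in the stratified case decomposes into local-to-global and minimality for $\msf{T}$. For local-to-global, given $X \in \msf{T}$, the corresponding principle in $\msf{U}$ yields $f^*X \in \mrm{Loc}_\msf{U}(\{\Gamma_\p f^*X\}_\p)$; applying $f_*$ and invoking commutation with $\Gamma_\p$ places $f_*f^*X$ in $\mrm{Loc}_\msf{T}(\{\Gamma_\p f_*f^*X\}_\p) \subseteq \mrm{Loc}_\msf{T}(\{\Gamma_\p X\}_\p)$ by the first observation applied to each $\Gamma_\p X$; combined with $X \in \mrm{Loc}_\msf{T}(f_*f^*X)$ this yields local-to-global. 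For minimality, let $0 \neq Y \in \Gamma_\p \msf{T}$. Conservativity of $f^*$ gives $0 \neq f^*Y \in \Gamma_\p \msf{U}$, so minimality in $\msf{U}$ gives $\mrm{Loc}_\msf{U}(f^*Y) = \Gamma_\p \msf{U}$; for any $Z \in \Gamma_\p \msf{T}$, applying $f_*$ to $f^*Z \in \mrm{Loc}_\msf{U}(f^*Y)$ and invoking the first observation gives $Z \in \mrm{Loc}_\msf{T}(f_*f^*Z) \subseteq \mrm{Loc}_\msf{T}(f_*f^*Y) \subseteq \mrm{Loc}_\msf{T}(Y)$. The costratification case is handled by the dual argument, replacing support by cosupport, exploiting conservativity of $f_*$ and the commutation $f_*\Lambda^\p \cong \Lambda^\p f_*$, and using that $f_*$ preserves products.

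For part (2), assume $\msf{T}$ is stratified and $\mrm{supp}_\msf{T}(\1_\msf{T}) \subseteq \mrm{supp}_\msf{U}(\1_\msf{U})$. Stratification of $\msf{T}$ reduces $\1_\msf{T} \in \mrm{Loc}_\msf{T}(f_*\1_\msf{U})$ to the containment $\mrm{supp}_\msf{T}(\1_\msf{T}) \subseteq \mrm{supp}_\msf{T}(f_*\1_\msf{U})$. For $\p \in \mrm{supp}_\msf{T}(\1_\msf{T}) \subseteq \mrm{supp}_\msf{U}(\1_\msf{U})$ we have $\Gamma_\p \1_\msf{U} \neq 0$, and conservativity of $f_*$ forces $\Gamma_\p f_*\1_\msf{U} \cong f_*\Gamma_\p \1_\msf{U} \neq 0$, giving the required containment. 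The main technical obstacle I expect is the careful verification of the commutation isomorphisms $f_*\Gamma_\p \cong \Gamma_\p f_*$ and $f_*\Lambda^\p \cong \Lambda^\p f_*$ in the general $R$-linear setting (where the action is external rather than arising from an internal endomorphism ring), and making the costratification argument fully rigorous, since the cohomological local functors interact more subtly with $f_*$ than the smashing local functors do.
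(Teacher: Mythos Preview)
Your stratification argument and part~(2) are essentially correct and match the paper's approach closely. The key identity $\mrm{Loc}_\msf{T}(Y) = \mrm{Loc}_\msf{T}(f_*f^*Y)$, derived from the projection formula and the building hypothesis, is exactly what the paper exploits; your minimality argument is a clean reorganisation of the paper's (which compares every nonzero object to the fixed reference $f_*\Gamma_\p L_\p \1_\msf{U}$ rather than comparing pairs directly). One minor difference: the paper obtains local-to-global for $\msf{T}$ for free from \cite[Theorem~7.2]{BIK11}, using that the action is canonical and $\1_\msf{T}$ generates, whereas you deduce it from local-to-global in $\msf{U}$. Your route is slightly more self-contained.

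The genuine gap is in the costratification sketch. The dual argument does \emph{not} run on conservativity of $f_*$: the functor $f^*$ does not commute with $\Lambda_\p$ or $V_\p$, so you cannot push $f^*Y$ into $\Lambda_\p V_\p\msf{U}$, and $f_*$ alone gives no way to pass from $\msf{T}$ into $\msf{U}$. What is needed is the further right adjoint $f^{(1)}$ and its conservativity (which follows from the building hypothesis via $\mrm{Hom}_\msf{U}(\1_\msf{U}, f^{(1)}X) \cong \mrm{Hom}_\msf{T}(f_*\1_\msf{U}, X)$). The correct dual of your key identity is
\[
\mrm{Coloc}_\msf{T}(Y) = \mrm{Coloc}_\msf{T}(f_*f^{(1)}Y),
\]
obtained from the isomorphism $f_*f^{(1)}Y \simeq F(f_*\1_\msf{U}, Y)$ (set $X=\1_\msf{U}$ in $F(f_*X,Y)\simeq f_*F(X,f^{(1)}Y)$) together with $\1_\msf{T} \in \mrm{Loc}_\msf{T}(f_*\1_\msf{U})$ and the fact that $F(-,Y)$ sends localizing subcategories to colocalizing ones. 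With this in hand your scheme dualises correctly: for nonzero $Y \in \Lambda_\p V_\p\msf{T}$ one has $0 \ne f^{(1)}Y \in \Lambda_\p V_\p\msf{U}$ (since $f^{(1)}$ commutes with $\Lambda_\p, V_\p$), invoke minimality in $\msf{U}$, and push down with $f_*$. This is precisely how the paper proceeds. Finally, for the converse in the costratified case you should note that costratification of $\msf{T}$ implies stratification of $\msf{T}$ by \cite[Theorem~9.7]{BIK12}, reducing to the stratified converse you already handled.
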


By the work of Benson-Iyengar-Krause~\cite{BIK11, BIK12}, the (co)stratification of a tensor-triangulated category provides a classification of the localizing subcategories, colocalizing subcategories and the thick subcategories of compact objects. As a corollary of this, we obtain bijections between the localizing subcategories of $\msf{T}$ and $\msf{U}$ (and similarly, for colocalizing subcategories and thick subcategories of compact objects). It should be noted that the bijections in the following statement are given by explicit formulas, and are not just abstract formal bijections. We direct the reader to Corollaries~\ref{cor:locofTandU} and~\ref{cor:colocofTandU} for the precise statement of the following. 
\begin{thm*}
Let $f^*\colon \msf{T} \to \msf{U}$ be an exact, strong symmetric monoidal, coproduct-preserving functor between $R$-linear tensor-triangulated categories which are rigidly-compactly generated by their tensor units. Suppose that $\msf{U}$ is costratified by $R$ and that $f_*\1_\msf{U}$ builds $\1_\msf{T}$.
Then there are bijections
\[\begin{tikzcd}\left\{\begin{tabular}{c} localizing subcategories \\ of $\msf{T}$ \end{tabular}\right\} \arrow[rr, "\cong"] \arrow[dr, "\cong"'] \arrow[dd, "\cong"'] & & \left\{\begin{tabular}{c} colocalizing subcategories \\ of $\msf{T}$ \end{tabular}\right\} \arrow[dl, "\cong"] \arrow[ll] \arrow[dd, "\cong"] \\
 & \{\text{subsets of $\mrm{supp}_\msf{T}(\1_\msf{T})$}\} \arrow[ul] \arrow[ur] \arrow[dl] \arrow[dr] & \\
\left\{\begin{tabular}{c} localizing subcategories \\ of $\msf{U}$ \end{tabular}\right\} \arrow[rr, "\cong"'] \arrow[ur, "\cong"'] \arrow[uu] & & \left\{\begin{tabular}{c} colocalizing subcategories \\ of $\msf{U}$ \end{tabular}\right\} \arrow[ul, "\cong"] \arrow[ll] \arrow[uu]
  \end{tikzcd}\]
 and 
 \[\begin{tikzcd}\left\{\begin{tabular}{c} thick subcategories \\ of compact \\ objects in $\msf{T}$ \end{tabular}\right\} \arrow[rr, "\cong"] \arrow[dr, "\cong"'] & & \left\{\begin{tabular}{c} thick subcategories \\ of compact \\ objects in $\msf{U}$ \end{tabular}\right\} \arrow[ll] \arrow[dl, "\cong"] \\
 & \left\{\begin{tabular}{c} specialization closed \\ subsets of $\mrm{supp}_\msf{T}(\1_\msf{T})$ \end{tabular}\right\}. \arrow[ul] \arrow[ur] & \end{tikzcd}\]
\end{thm*}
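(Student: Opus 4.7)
The plan is to combine the first main theorem of the introduction with the classification results of Benson--Iyengar--Krause. Since $\msf{U}$ is costratified (hence also stratified) by $R$ and $f_*\1_\msf{U}$ builds $\1_\msf{T}$, part (1) of the preceding theorem gives that $\msf{T}$ is costratified (and stratified) by $R$ as well. Both $\msf{T}$ and $\msf{U}$ are therefore $(R\text{-linear})$ tt-categories generated by their tensor units to which the BIK classification applies: the stratification of $\msf{T}$ provides a bijection between localizing subcategories of $\msf{T}$ and subsets of $\mrm{supp}_\msf{T}(\1_\msf{T})$; the costratification provides a bijection between colocalizing subcategories of $\msf{T}$ and subsets of $\mrm{supp}_\msf{T}(\1_\msf{T})$ (using that the ambient category is generated by its unit); and the classification of thick subcategories of compacts by specialization closed subsets of $\mrm{supp}_\msf{T}(\1_\msf{T})$ follows likewise. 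The analogous statements for $\msf{U}$ produce the remaining ``spokes'' in each of the two diagrams.

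To splice the $\msf{T}$-side to the $\msf{U}$-side, it suffices to identify $\mrm{supp}_\msf{T}(\1_\msf{T})$ with $\mrm{supp}_\msf{U}(\1_\msf{U})$. The inclusion $\mrm{supp}_\msf{T}(\1_\msf{T}) \subseteq \mrm{supp}_\msf{U}(\1_\msf{U})$ is immediate from the hypothesis $\1_\msf{T} \in \mrm{Loc}_\msf{T}(f_*\1_\msf{U})$ together with the compatibility $\mrm{supp}_\msf{T}(f_*X) \subseteq \mrm{supp}_\msf{U}(X)$ coming from $R$-linearity of $f_*$; the reverse inclusion $\mrm{supp}_\msf{U}(\1_\msf{U}) \subseteq \mrm{supp}_\msf{T}(\1_\msf{T})$ follows from $f^*\1_\msf{T} \simeq \1_\msf{U}$ together with the corresponding inclusion $\mrm{supp}_\msf{U}(f^*Y) \subseteq \mrm{supp}_\msf{T}(Y)$ for $f^*$. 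Both inclusions are standard for $R$-linear tensor-exact functors and should have been recorded earlier in the paper.

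With the two supports identified, the horizontal arrows are forced by the parameterization: a localizing subcategory $\msf{L} \subseteq \msf{T}$ is sent via its support to the localizing subcategory of $\msf{U}$ corresponding to the same subset of $\mrm{supp}_\msf{T}(\1_\msf{T}) = \mrm{supp}_\msf{U}(\1_\msf{U})$, and one then verifies that this operation is realized intrinsically by $\msf{L} \mapsto \mrm{Loc}_\msf{U}(f^*\msf{L})$, with inverse $\msf{M} \mapsto \mrm{Loc}_\msf{T}(f_*\msf{M})$. The identical argument handles colocalizing subcategories and, using that $f^*$ preserves compact objects (a consequence of the existence of the further right adjoint $f^{(1)}$), thick subcategories of compacts. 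I expect the main obstacle to be not the (co)stratification step, which is already settled by the first main theorem, but a careful verification that the two support inclusions hold in the stated generality and that the horizontal arrows induced by $f^*$ and $f_*$ agree with those coming from the BIK parameterization; these are essentially bookkeeping exercises within the framework of $R$-linear support developed earlier in the paper.
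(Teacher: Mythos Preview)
Your proposal is correct and follows essentially the same route as the paper. The paper's proofs of Corollaries~\ref{cor:locofTandU} and~\ref{cor:colocofTandU} proceed exactly as you outline: descend (co)stratification via Theorems~\ref{thm:stratdescends} and~\ref{thm:costratdescends}, invoke the BIK classification on each side, identify $\mrm{supp}_\msf{T}(\1_\msf{T}) = \mrm{supp}_\msf{U}(\1_\msf{U})$, and verify that the triangle with horizontal map $\msf{L}\mapsto\mrm{Loc}_\msf{U}(f^*\msf{L})$ commutes. The only cosmetic difference is that the paper obtains the support equality in one stroke from Corollary~\ref{cor:supp} (which gives $\mrm{supp}_\msf{T}(X)=\mrm{supp}_\msf{U}(f^*X)$ for all $X$, using conservativity of $f^*$), rather than via your two separate inclusions through $f^*$ and $f_*$; and for the colocalizing bijection the paper uses $f^{(1)}$ rather than $f_*$ as the explicit transport, though both work.
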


One could also ask for a more general version of the previous results, where the ring which acts on $\msf{T}$ can be different to the ring which acts on $\msf{U}$. Barthel-Castellana-Heard-Valenzuela~\cite{BCHV, BCHV2} have discussed descent statements for maps of commutative noetherian ring spectra where the ring acting is different, and they developed a notion of Quillen lifting to tackle this, see Remark~\ref{rem:Quillenlifting} for further discussion. In this paper, we choose to restrict to the case when the same ring acts. We emphasize that whilst this is a restrictive hypothesis, this allows us to obtain a condition which is both necessary and sufficient. Moreover, there are still interesting examples which satisfy this hypothesis.

We now turn to applications of our results. We give two main applications: to commutative DG-rings and to connective ring spectra. For a non-positive (in cohomological grading) commutative DG-ring $A$ with finite amplitude, our result applies to show that the derived category $\msf{D}(A)$ is both stratified and costratified by the action of $H^0A$, see Theorems~\ref{thm:main},~\ref{thm:consequencessupp},~\ref{thm:consequencescompacts} and~\ref{thm:consequencescosupp} in the main body of the paper for a more precise statement and additional consequences. 
\begin{thm*}
Let $A$ be a non-positive commutative DG-ring with finite amplitude and $H^0A$ noetherian. Then $\msf{D}(A)$ is stratified and costratified by the action of $H^0A$.
\end{thm*}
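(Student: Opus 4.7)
The plan is to apply Theorems~\ref{thm:stratdescends} and~\ref{thm:costratdescends} to the base-change functor
\[
f^* = H^0A \otimes_A^{\mbf{L}} -\colon \msf{D}(A) \to \msf{D}(H^0A)
\]
coming from the canonical projection $A \twoheadrightarrow H^0A$, which exists because $A$ is non-positive. This functor is exact, strong symmetric monoidal and coproduct-preserving. Both derived categories are rigidly-compactly generated by their tensor units: for $\msf{D}(H^0A)$ this is standard since $H^0A$ is commutative noetherian, and for $\msf{D}(A)$ the compact objects coincide with the perfect DG-modules, which are dualizable over a commutative DG-ring. Moreover, $\msf{D}(H^0A)$ is already both stratified and costratified by the action of $H^0A$ on itself, by the classical results of Neeman and Benson-Iyengar-Krause.

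The only hypothesis that then requires work is the building condition $\1_{\msf{D}(A)} = A \in \mrm{Loc}_{\msf{D}(A)}(f_*H^0A) = \mrm{Loc}_{\msf{D}(A)}(H^0A)$, and this is the main obstacle. To verify it I would exploit the finite amplitude assumption via the Postnikov-type tower of canonical truncations. Writing the cohomology of $A$ as concentrated in degrees $-n, \ldots, 0$, there is a tower
\[
A = \tau^{\geq -n}A \longrightarrow \tau^{\geq -(n-1)}A \longrightarrow \cdots \longrightarrow \tau^{\geq 0}A = H^0A
\]
of length $n+1$, whose successive fibres are shifts of the cohomology modules $H^{-k}A$ for $0 \le k \le n$. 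Each $H^{-k}A$ is an $H^0A$-module, and hence lies in $\mrm{Loc}_{\msf{D}(H^0A)}(H^0A) = \msf{D}(H^0A)$. Since restriction of scalars $f_*$ is exact and preserves arbitrary coproducts, each such fibre also lies in $\mrm{Loc}_{\msf{D}(A)}(H^0A)$, and a finite induction up the tower then yields $A \in \mrm{Loc}_{\msf{D}(A)}(H^0A)$. The main subtlety is verifying that the truncation functors on $\msf{D}(A)$ interact correctly with the DG structure and that the successive fibres really are identified with shifts of the cohomology modules, but this is well-known for non-positive commutative DG-rings.

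With all hypotheses in hand, Theorems~\ref{thm:stratdescends} and~\ref{thm:costratdescends} then descend (co)stratification of $\msf{D}(H^0A)$ by $H^0A$ to (co)stratification of $\msf{D}(A)$ by $H^0A$, which is the conclusion.
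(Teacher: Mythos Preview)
Your approach is essentially identical to the paper's: the truncation/Postnikov argument for the building condition is exactly Proposition~\ref{prop:builds} (and Corollary~\ref{cor:builds}), and the paper then invokes the descent theorems just as you do (via Corollaries~\ref{cor:equiv-stra} and~\ref{cor:equiv-costra}, but only the direction given by Theorems~\ref{thm:stratdescends} and~\ref{thm:costratdescends} is needed for this statement). The one hypothesis you do not explicitly check is that $f^*$ is a \emph{canonical $H^0A$-linear} functor in the sense of Definition~\ref{defn:compatible}; the paper records this as Lemma~\ref{lem:linear}, and while it is routine, it is formally required in order to invoke the descent theorems.
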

This theorem is a strong generalization of~\cite[Theorem 8.1]{BIK11} and~\cite[Theorem 10.3]{BIK12} which show that for a formal commutative DG-ring $A$ with $H^*A$ noetherian, the derived category $\msf{D}(A)$ is stratified and costratified by $H^*A$, see Remark~\ref{rem:formal} for more details. The assumption that $A$ has finite amplitude is necessary. In Example~\ref{ex:inf-amp} we give an example of a commutative noetherian DG-ring $A$ with infinite amplitude such that $\msf{D}(A)$ is neither stratified nor costratified by the action of $H^0A$. Commutative DG-rings of this form are the affine pieces of derived algebraic geometry,
and one may view this theorem as a formal proof for the principle that the relation between an eventually coconnective derived scheme and its underlying classical scheme is similar to the relation between a scheme and the reduced scheme associated to it.

For a non-positive commutative DG-ring $A$, using the above stratification result, we also show that the reduction functor $H^0A \otimes_A^\mrm{L} -$ and the coreduction functor $\mrm{RHom}_A(H^0A,-)$ can often be used to reduce questions in $\msf{D}(A)$ to questions in the much better understood category $\msf{D}(H^0A)$. For instance, we prove in Corollary~\ref{cor:suppdeterminesbuilding} that if $A$ is commutative with finite amplitude and $H^0A$ is noetherian, then for $M,N\in \msf{D}(A)$ it holds that $M$ builds $N$ in $\msf{D}(A)$ if and only if $H^0A \otimes_A^\mrm{L} M$ builds $H^0A \otimes_A^\mrm{L} N$ in $\msf{D}(H^0A)$. We further show in Example~\ref{ex:notbuild} that this result is false if $A$ has infinite amplitude. 
These reduction results are very powerful in reducing questions from the derived category of the DG-ring $A$ to the derived category of the ring $H^0A$.

However, there are some notable exceptions here: we show that the notions of proxy-smallness and virtual-smallness as defined by Dwyer-Greenlees-Iyengar~\cite{DGIduality, dwyer2004finiteness} cannot be determined by reduction to $\msf{D}(H^0A)$, see Theorem~\ref{thm:contradiction}. In light of recent work by Briggs-Iyengar-Letz-Pollitz~\cite{BILP} (building on work of Dwyer-Greenlees-Iyengar~\cite{dwyer2004finiteness}, Pollitz~\cite{pollitz2019derived} and Letz~\cite{Letz}) who provided a characterization of locally complete intersections in terms of proxy-smallness, this suggests that locally complete intersections between commutative DG-rings are significantly more complicated than locally complete intersections between ordinary rings. This can also already be seen through work of the first author~\cite{Shaulsmooth}, where the definition is unwieldy without a restriction to maps which are retracts of maps of flat dimension 0. 

We also apply our main result to connective ring spectra $R$, giving a necessary and sufficient condition for the homotopy category of $R$-modules to be stratified and costratified by $\pi_0R$. Given a connective ring spectrum $R$, we write $\msf{D}(R)$ for its homotopy category of modules. Applied in this setting, our main result yields the following. 
\begin{thm*}
Let $R$ be a connective commutative ring spectrum with $\pi_0R$ noetherian. Then $\msf{D}(R)$ is stratified and costratified by $\pi_0R$ if and only if $\pi_0R$ builds $R$ in $\msf{D}(R)$.
\end{thm*}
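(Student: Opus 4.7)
The plan is to apply the main theorem of the introduction to the zeroth Postnikov section $\rho\colon R \to H\pi_0R$. This is a map of connective commutative ring spectra and induces a strong symmetric monoidal, exact, coproduct-preserving base-change functor $f^* = H\pi_0R \wedge_R^\mrm{L} (-)\colon \msf{D}(R) \to \msf{D}(H\pi_0R)$, whose right adjoint $f_*$ is restriction of scalars along $\rho$. In particular, $f_*\1_{\msf{D}(H\pi_0R)}$ is exactly $\pi_0R$ viewed as an $R$-module via $\rho$. By the Shipley-Schwede equivalence, $\msf{D}(H\pi_0R)$ is equivalent as a rigidly-compactly generated tensor-triangulated category to the unbounded derived category $\msf{D}(\pi_0R)$, and under this equivalence the induced $\pi_0R$-action corresponds to the canonical one on $\msf{D}(\pi_0R)$. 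Both $\msf{D}(R)$ and $\msf{D}(H\pi_0R)$ are rigidly-compactly generated by their tensor units, so the setup of the main theorem is in place.

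Since $\pi_0R$ is noetherian, the work of Neeman and of Benson-Iyengar-Krause shows that $\msf{D}(\pi_0R)$ is both stratified and costratified by the canonical $\pi_0R$-action. Part (1) of the main theorem then immediately gives one direction: if $\pi_0R \simeq f_*\1_\msf{U}$ builds $R = \1_\msf{T}$ in $\msf{D}(R)$, then $\msf{D}(R)$ is both stratified and costratified by $\pi_0R$.

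For the converse, we need the hypothesis $\mrm{supp}_\msf{T}(\1_\msf{T}) \subseteq \mrm{supp}_\msf{U}(\1_\msf{U})$ of part (2) of the main theorem. Under the equivalence $\msf{D}(H\pi_0R) \simeq \msf{D}(\pi_0R)$ the tensor unit $\1_\msf{U}$ corresponds to $\pi_0R$ itself, whose support is the whole of $\mrm{Spec}(\pi_0R)$. Hence the containment holds trivially, and part (2) of the main theorem yields the converse, giving the desired biconditional.

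The main obstacles are not deep but bookkeeping: checking that $f^*$ really satisfies the full list of hypotheses of the main theorem (strong symmetric monoidality, coproduct preservation, compatibility with the $\pi_0R$-action on both sides), and transporting the $\pi_0R$-linear structure faithfully along the Shipley-Schwede equivalence $\msf{D}(H\pi_0R) \simeq \msf{D}(\pi_0R)$ so that both (co)stratification statements and the support computations are compatible. These are essentially standard facts about module categories over connective Eilenberg-MacLane spectra and should be recorded in the setup section of the paper preceding this application.
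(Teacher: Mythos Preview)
Your proposal is correct and follows essentially the same route as the paper: apply the main descent theorem (Corollaries~\ref{cor:equiv-stra} and~\ref{cor:equiv-costra}) to the base-change along $R \to H\pi_0R$, invoking Neeman and Benson--Iyengar--Krause for the (co)stratification of $\msf{D}(\pi_0R)$ via the Schwede--Shipley equivalence, and verifying the support hypothesis from the fact that $\mrm{supp}_{\pi_0R}(\pi_0R) = \mrm{Spec}(\pi_0R)$. The paper packages the ``bookkeeping'' you mention---the canonical $\pi_0R$-linearity of $f^*$ and the support equality $\mrm{supp}_R(R) = \mrm{supp}_R(\pi_0R)$---into a separate preparatory lemma (Lemma~\ref{lem:linearsupport}), just as you anticipated.
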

We remark that given a commutative ring spectrum $R$ with $\pi_*R$ noetherian, it is expected in general that $\msf{D}(R)$ will be stratified and costratified by $\pi_*R$ rather than $\pi_0R$. This highlights that our assumption that the same ring acts on both categories is quite restrictive in this setting. Nonetheless, the above application is still valuable since it gives a necessary and sufficient condition, and under certain nilpotency assumptions one may show that (co)stratification by $\pi_*R$ holds if and only if (co)stratification by $\pi_0R$ does, see Proposition~\ref{prop:strat-rzero} and Corollary~\ref{cor:nilpotent}.

\subsection*{Acknowledgements}
Both authors were supported by the grant GA~\v{C}R 20-02760Y from the Czech Science Foundation.
Shaul was also supported by Charles University Research Centre program No.UNCE/SCI/022.
The authors thank the referee for suggestions that helped improving this manuscript.

\section{Preliminaries} 
Here we recall the necessary preliminaries on tensor-triangulated categories, and the various notions of smallness in use throughout the paper. Let $\msf{T}$ be a triangulated category with arbitrary (small) products and coproducts. Note that under a mild hypothesis (compact generation) which we will always assume, the existence of small coproducts implies the existence of small products~\cite[Proposition 8.4.6]{Neeman}.

A full subcategory of $\msf{T}$ is said to be \emph{thick} if it is replete (closed under isomorphisms), triangulated and closed under retracts, is said to be \emph{localizing} if it is thick and closed under arbitrary coproducts, and is said to be \emph{colocalizing} if it is thick and closed under arbitrary products. Given a set $\mathcal{K}$ of objects in $\msf{T}$ we write $\mrm{Thick}_\msf{T}(\mathcal{K})$ for the smallest thick subcategory of $\msf{T}$ containing $\mathcal{K}$, $\mrm{Loc}_\msf{T}(\mathcal{K})$ for the smallest localizing subcategory of $\msf{T}$ containing $\mathcal{K}$, and $\mrm{Coloc}_\msf{T}(\mathcal{K})$ for the smallest colocalizing subcategory of $\msf{T}$ containing $\mathcal{K}$. If $\mathcal{K} = \{X\}$ consists of a single object, then we write $\mrm{Loc}_\msf{T}(X)$ for $\mrm{Loc}_\msf{T}(\{X\})$ and similarly for thick and colocalizing subcategories. 
\begin{defn}
Let $X, Y \in \msf{T}$. We say that:
\begin{itemize}
\item $X$ \emph{builds} $Y$ (or $Y$ is built from $X$) if $Y \in \mrm{Loc}_\msf{T}(X)$;
\item $X$ \emph{finitely builds} $Y$ (or $Y$ is finitely built from $X$) if $Y \in \mrm{Thick}_\msf{T}(X)$;
\item $X$ \emph{cobuilds} $Y$ (or $Y$ is cobuilt from $X$) if $Y \in \mrm{Coloc}_\msf{T}(X)$.
\end{itemize}
\end{defn}

\begin{defn}
An object $X \in \msf{T}$ is said to be \emph{small} (or \emph{compact}) if the natural map
\[\bigoplus \mrm{Hom}_\msf{T}(X, A_i) \to \mrm{Hom}_\msf{T}(X, \bigoplus A_i) \] is an isomorphism for every set $\{A_i\}$ of objects of $\msf{T}$.
We write $\msf{T}^\omega$ for the full subcategory of $\msf{T}$ consisting of the compact objects.
\end{defn}

The following definitions first appeared in~\cite{DGIduality} and~\cite{dwyer2004finiteness}.
\begin{defn}
A non-zero object $X \in \msf{T}$ is said to be:
\begin{itemize}
\item \emph{proxy-small} if there exists a small object $W \in \msf{T}$ such that $X$ finitely builds $W$ and $W$ builds $X$;
\item \emph{virtually-small} if there exists a non-zero small object $W \in \msf{T}$ such that $X$ finitely builds $W$.
\end{itemize}
We call $W$ a \emph{witness} for the fact that $X$ is proxy-small/virtually-small. Note that any proxy-small object is virtually-small.
\end{defn}

Throughout the paper, we will work with tensor-triangulated categories, that is, triangulated categories with a compatible closed symmetric monoidal structure. We write $\1_\msf{T}$ for the tensor unit, $\otimes$ for the tensor product and $F(-,-)$ for the internal hom. Moreover, we will work with rigidly-compactly generated tensor-triangulated categories; that is, those which have a set $\mathcal{G}$ of compact and dualizable objects which generate $\msf{T}$ in the sense that $\mrm{Loc}_\msf{T}(\mathcal{G}) = \msf{T}$, and for which the unit $\1_\msf{T}$ is compact. Under this assumption, the compact objects of $\msf{T}$ are precisely those which are finitely built from $\mathcal{G}$, see~\cite[Theorem 2.1.3(d)]{HPS}. 

A localizing subcategory is said to be a \emph{localizing tensor ideal} if it is closed under tensoring with arbitrary objects of $\msf{T}$ and a colocalizing subcategory $\msf{C}$ is said to be \emph{hom-closed} if for $Y \in \msf{T}$ and $Z \in \msf{C}$ we have $F(Y,Z) \in \msf{C}$. Given a set $\mathcal{K}$ of objects in $\msf{T}$ we write $\mrm{Loc}_\msf{T}^\otimes(\mathcal{K})$ for the smallest localizing tensor ideal of $\msf{T}$ containing $\mathcal{K}$, and $\mrm{Coloc}^\mrm{Hom}_\msf{T}(\mathcal{K})$ for the smallest hom-closed colocalizing subcategory of $\msf{T}$ containing $\mathcal{K}$. 

If $\G$ is a set of compact generators for $\msf{T}$, then one sees that \[\mrm{Loc}^\otimes_\msf{T}(\mathcal{K}) = \mrm{Loc}_\msf{T}(\G \otimes \mathcal{K}) \quad \mathrm{and} \quad \mrm{Coloc}^\mrm{Hom}_\msf{T}(\mathcal{K}) = \mrm{Coloc}_\msf{T}(F(\G, \mathcal{K}))\] where $\mrm{Loc}_\msf{T}(\G \otimes \mathcal{K}) = \mrm{Loc}_\msf{T}(\{G \otimes K \mid G \in \G, K \in \mathcal{K}\})$ for example. Therefore, if $\msf{T}$ is compactly generated by its tensor unit $\1_\msf{T}$, then every localizing subcategory is a localizing tensor ideal, and every colocalizing subcategory is hom-closed. 

\begin{defn}
A functor $f^*\colon \msf{T} \to \msf{U}$ between tensor-triangulated categories is called \emph{geometric} if it is exact, strong symmetric monoidal and preserves arbitrary coproducts.
\end{defn}

We note that $f^*$ need not be induced by any map $f$; this notation is merely suggestive. Geometric functors satisfy various important properties as we now recall.
\begin{lem}\label{lem:Gbuilds}
Let $f^*\colon \msf{T} \to \msf{U}$ be a geometric functor between rigidly-compactly generated tensor-triangulated categories. 
\begin{enumerate}
\item\label{item:adjoint} There is an adjoint triple 
\[\begin{tikzcd}
\msf{T} \arrow[rr, "f^*" description, yshift=4mm] \arrow[rr, "f^{(1)}" description, yshift=-4mm] & & \msf{U} \arrow[ll, "f_*" description]
\end{tikzcd} \] where the left adjoints are displayed above the respective right adjoints.
\item\label{item:proj} There is a projection formula for the adjunction $(f^*, f_*)$, that is, for any $X \in \msf{U}$ and $Y \in \msf{T}$, the natural map $f_*X \otimes Y \to f_*(X \otimes f^*Y)$ is an isomorphism. 
\item\label{item:adj} For any $X \in \msf{U}$ and $Y \in \msf{T}$, there is a natural isomorphism \[F(f_*X, Y) \simeq f_*F(X,f^{(1)}Y).\]
\item\label{item:builds} If $X$ builds (resp., finitely builds, resp., cobuilds) $Y$ in $\msf{U}$, then $f_*X$ builds (resp., finitely builds, resp., cobuilds) $f_*Y$ in $\msf{T}$.
\end{enumerate}
\end{lem}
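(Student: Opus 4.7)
The plan is to derive the four parts from the standard machinery for left-adjoint strong symmetric monoidal functors between rigidly-compactly generated tensor-triangulated categories, exploiting the fact that the tensor unit is a compact generator on both sides.

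For part~\eqref{item:adjoint}, I would first produce $f_*$ via Brown representability: $f^*$ is exact, coproduct-preserving, and defined on a compactly generated triangulated category, so Neeman's adjoint functor theorem applies. To iterate the construction and obtain $f^{(1)}$, the key point is that $f_*$ itself preserves coproducts, which by the usual duality is equivalent to $f^*$ preserving compact objects. Since $\msf{T}$ is rigidly-compactly generated by $\1_\msf{T}$, every compact of $\msf{T}$ lies in $\mrm{Thick}_\msf{T}(\1_\msf{T})$; as $f^*$ is exact and $f^*\1_\msf{T} \simeq \1_\msf{U}$ by strong monoidality, and $\1_\msf{U}$ is compact by hypothesis, $f^*$ indeed sends compacts to compacts. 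Brown representability applied to $f_*$ then yields $f^{(1)}$.

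For part~\eqref{item:proj}, I would construct the projection formula map as the adjoint of
\[ f^*(f_*X \otimes Y) \simeq f^*f_*X \otimes f^*Y \xrightarrow{\varepsilon \otimes 1} X \otimes f^*Y, \]
using strong monoidality of $f^*$ and the counit of $(f^*, f_*)$. To show this is an isomorphism, I fix $X \in \msf{U}$ and let $\msf{S} \subseteq \msf{T}$ be the full subcategory of objects $Y$ for which the map is invertible. Both $Y \mapsto f_*X \otimes Y$ and $Y \mapsto f_*(X \otimes f^*Y)$ are exact and coproduct-preserving (the latter thanks to part~\eqref{item:adjoint}), so $\msf{S}$ is a localizing subcategory. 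It contains $\1_\msf{T}$ because $f^*\1_\msf{T} \simeq \1_\msf{U}$, and by compact generation by the unit $\mrm{Loc}_\msf{T}(\1_\msf{T}) = \msf{T}$, so $\msf{S} = \msf{T}$. Part~\eqref{item:adj} is then a Yoneda computation: for any $Z \in \msf{T}$, one strings together
\[ \mrm{Hom}_\msf{T}(Z, F(f_*X,Y)) \simeq \mrm{Hom}_\msf{T}(f_*(X \otimes f^*Z), Y) \simeq \mrm{Hom}_\msf{U}(X \otimes f^*Z, f^{(1)}Y) \simeq \mrm{Hom}_\msf{T}(Z, f_*F(X, f^{(1)}Y)) \]
using the projection formula in the first step, the adjunction $(f_*, f^{(1)})$ in the second, and the $(\otimes, F)$-adjunction together with $(f^*, f_*)$ in the third.

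For part~\eqref{item:builds}, the functor $f_*$ is exact as a triangulated right adjoint, so the full subcategory $\{Y' \in \msf{U} : f_*Y' \in \mrm{Thick}_\msf{T}(f_*X)\}$ is thick and contains $X$, yielding the statement for finite building. Part~\eqref{item:adjoint} provides that $f_*$ preserves coproducts, making the analogous subcategory localizing and giving the statement for building; finally, $f_*$ preserves arbitrary products as a right adjoint, so the analogous subcategory is colocalizing and the cobuilding claim follows.

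The only real obstacle is part~\eqref{item:adjoint}: producing the second right adjoint $f^{(1)}$ depends on $f^*$ preserving compacts, which in turn uses essentially that the unit is a compact generator of $\msf{T}$ together with strong monoidality. Once this is secured, parts~\eqref{item:proj}--\eqref{item:builds} are formal consequences of the adjoint triple combined with compact generation by the unit.
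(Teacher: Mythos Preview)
Your proof is correct and aligns with the paper's approach: the paper simply cites \cite[Corollary~2.14 and Proposition~2.15]{BDS16} for parts~\eqref{item:adjoint}--\eqref{item:adj} and gives the same one-line observation for part~\eqref{item:builds}, while you have unpacked those citations explicitly. The only minor difference is that BDS16 argues $f^*$ preserves compacts via the identification of compacts with dualizables in a rigidly-compactly generated tt-category (strong monoidal functors preserve dualizables), whereas you use the stronger hypothesis that $\1_\msf{T}$ is a compact generator; both routes work here.
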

\begin{proof}
Parts (1), (2) and (3) can be found in~\cite[Corollary 2.14 and Proposition 2.15]{BDS16}. Part (4) is an immediate consequence of the fact that $f_*$ is exact and preserves coproducts and products.
\end{proof}

\begin{ex}
For orientation, it is helpful to keep in mind the example when $f^*$ is the extension of scalars functor $B \otimes^\mrm{L}_A -\colon \msf{D}(A) \to \msf{D}(B)$ along a map of commutative rings $f\colon A \to B$. Then $f_*$ is the restriction of scalars functor, and $f^{(1)}$ is the coextension of scalars functor $\mrm{RHom}_A(B,-)$. Note that the notation differs from the standard; here, $f^*$ corresponds to the derived inverse image functor along the corresponding map $\mrm{Spec}B \to \mrm{Spec}A$ of affine schemes.
\end{ex}

Recall that an exact functor $F\colon \msf{T} \to \msf{U}$ is said to be \emph{conservative} if it reflects isomorphisms, or equivalently, if $FX \simeq 0$ implies that $X \simeq 0$. 
\begin{prop}\label{prop:conservative}
Let $f^*\colon \msf{T} \to \msf{U}$ be a geometric functor between rigidly-compactly generated tensor-triangulated categories. If $\mrm{Loc}(f_*\G_\msf{U}) = \msf{T}$ then the functors $f^*$ and $f^{(1)}$ are conservative.
\end{prop}
\begin{proof}
To show that $f^{(1)}$ is conservative, first suppose that $f^{(1)}X \simeq 0$. Therefore $\mrm{Hom}_\msf{U}^*(G, f^{(1)}X) = 0$ for all $G \in \G_\msf{U}$, so $\mrm{Hom}_\msf{T}^*(f_*G, X)= 0$ by adjunction for all $G \in \G_\msf{U}$. Since $f_*\G_\msf{U}$ generates $\msf{T}$, it follows that $X \simeq 0$.

To prove that $f^*$ is conservative, first suppose that $X \in \msf{T}$ is such that $f^*X \simeq 0$. Therefore $f_*G \otimes X \simeq f_*(G \otimes f^*X) \simeq 0$ for all $G\in \G_\msf{U}$ using the projection formula (Lemma~\ref{lem:Gbuilds}(\ref{item:proj})). Since $\1_\msf{T} \in \mrm{Loc}(f_*\G_\msf{U})$, it follows that $X \simeq 0$ as required.
\end{proof}

\begin{rem}
The hypothesis that $\mrm{Loc}(f_*\G_\msf{U}) = \msf{T}$ ensures that $f_*\G_\msf{U}$ generates $\msf{T}$. However, we do \emph{not} require that the objects of $f_*\G_\msf{U}$ are compact in $\msf{T}$; indeed, in our main example of interest this is not the case, see Remark~\ref{rem:notcompact}.
\end{rem}

\section{Descent of stratification and costratification}
\subsection{Local (co)homology and (co)support}
Recall that for a (graded) commutative noetherian ring $R$ and a triangulated category $\msf{T}$, an action of $R$ on $\msf{T}$ is a map $R \to Z^*_\msf{T}(X)$ for each $X\in \msf{T}$ where $Z^*_\msf{T}(X)$ is the graded centre of $\msf{T}$. This gives the data of a map $\phi_X\colon R \to \mrm{End}_\msf{T}^*(X)$ for each $X \in \msf{T}$ such that the $R$-actions on $\mrm{Hom}_\msf{T}^*(X,Y)$ from the left and from the right are compatible, see~\cite[\S 4]{BIK08} for more details. We call such a triangulated category \emph{$R$-linear}.

We denote by $\mrm{Spec}R$ the graded prime spectrum of $R$. 
For a set $\V \subseteq \mrm{Spec}R$, we let $\V^c$ denote the complement of $\V$.
Given a specialization closed subset $\V$ of $\mrm{Spec}R$, one can construct certain (co)localization functors of torsion, localization and completion. See \cite{BIK08, BIK12} for more details about the following constructions, noting Remark~\ref{rem:notation} where we explain the difference in notation. 

There is a localization functor $L_{\V^c}\colon \msf{T} \to \msf{T}$ defined by \[\mrm{ker}(L_{\V^c}) = \{X \in \msf{T} \mid \mrm{Hom}_\msf{T}^*(C, X)_\p =0 \text{ for all $\p \in \V^c$ and $C \in \msf{T}^\omega$}\}.\]
This has a corresponding colocalization functor $\Gamma_\V$. We define $\Gamma_\p = \Gamma_{\downcl (\p)}$ and $L_\p = L_{\upcl (\p)}$, where $\downcl(\p) = \{\q \in \mrm{Spec}R \mid \p \supseteq \q\}$ and $\upcl(\p) = \{\q \in \mrm{Spec}R \mid \p \subseteq \q\}$. For each object $X \in \msf{T}$ and each specialization closed set $\V$ of $\mrm{Spec}R$ there is a triangle \[\Gamma_\V X \to X \to L_{\V^c}X;\] we warn the reader that there is no triangle $\Gamma_\p X \to X \to L_\p X$ since $\downcl(\p)^c \neq \upcl(\p)$. The functors $\Gamma_\V$ and $L_{\V^c}$ preserve coproducts and so have right adjoints by Brown Representability, see~\cite[Theorem 8.4.4]{Neeman}. We denote the right adjoints of $\Gamma_\V$ and $L_{\V^c}$ by $\Lambda_\V$ and $V_{\V^c}$ respectively. Similarly to above, given a prime ideal $\p$ we define $\Lambda_{\p} = \Lambda_{\downcl (\p)}$ and $V_{\p} = V_{\upcl (\p)}.$ There is a triangle \[V_{\V^c}X \to X \to \Lambda_\V X\] for any $X \in \msf{T}$. We denote the essential image of $\Gamma_\p$ by $\Gamma_\p \msf{T}$, and similarly for the other (co)localization functors.

If $\msf{T}$ is tensor-triangulated and the $R$-action on $\msf{T}$ is \emph{canonical}; that is, the map $R \to \mrm{End}^*_\msf{T}(X)$ factors through $\mrm{End}^*_\msf{T}(\1_\msf{T})$, then $\Gamma_\V$ and $L_{\V^c}$ are \emph{smashing}, see~\cite[Proposition 8.1]{BIK08}. This means that for any object $X \in \msf{T}$ we have natural isomorphisms $\Gamma_\V X \simeq \Gamma_\V\1_\msf{T} \otimes X$ and $L_{\V^c}X \simeq L_{\V^c}\1_\msf{T} \otimes X$. By definition, one then notices that $\Lambda_\V X \simeq F(\Gamma_\V\1_\msf{T}, X)$ and $V_{\V^c}X \simeq F(L_{\V^c}\1_\msf{T}, X).$

\begin{ex}
Let $R$ be a commutative noetherian ring, 
and consider $\msf{D}(R)$ with the canonical $R$-action on it.
Then the functor $\Gamma_\p$ is the right derived functor of the $\p$-torsion functor,
the functor $\Lambda_\p$ is the left derived functor of the $\p$-adic completion functor,
the functor $L_\p$ is the localization functor given by $M \mapsto M_{\p}$,
and the functor $V_{\p}$ is the colocalization functor given by $M \mapsto \mrm{RHom}_R(R_{\p},M)$. For more details see~\cite[Theorem 9.1]{BIK08} and \cite{PSY}.
\end{ex}

Using the functors described above, one has notions of support and cosupport. For an object $X \in \msf{T}$, the \emph{support} of $X$ is defined by \[\mrm{supp}_\msf{T}(X) = \{\p \in \mrm{Spec}R \mid \Gamma_\p L_\p X \not\simeq 0\}\] and the \emph{cosupport} of $X$ is defined by \[\mrm{cosupp}_\msf{T}(X) =  \{\p \in \mrm{Spec}R \mid \Lambda_\p V_\p X \not\simeq 0\}.\]
 
\begin{rem}\label{rem:notation}
Our notation differs from~\cite{BIK08, BIK12} in two ways: 
\begin{enumerate}
\item The composite $\Gamma_\p L_\p = \Gamma_{\downcl(\p)}L_{\upcl(\p)}$ is denoted by $\Gamma_\p$ in~\cite{BIK08}. We instead keep the notation for the localization and torsion functors distinct. The same remark applies to the composite $\Lambda_\p V_\p$.
\item We index the localization $L_U$ on a generalization closed subset $U$; in~\cite{BIK08}, $L$ is indexed on the complementary specialization closed set. Similarly, in~\cite{BIK12} the colocalization $V$ is indexed on the complementary specialization closed set.
\end{enumerate}
\end{rem}

\begin{defn}\label{defn:compatible}
Let $\msf{T}$ and $\msf{U}$ be $R$-linear tensor-triangulated categories. An \emph{$R$-linear functor} $f^*\colon \msf{T} \to \msf{U}$ is an exact functor $f^*\colon \msf{T} \to \msf{U}$, such that the triangle
\[\begin{tikzcd}
& R \arrow[dl, "\phi_X"'] \arrow[dr, "\phi_{f^*X}"] & \\
 \mrm{End}^*_\msf{T}(X) \arrow[rr, "f^*"'] & & \mrm{End}^*_\msf{U}(f^*X)
\end{tikzcd}\]
commutes for all $X \in \msf{T}$. If moreover, the actions of $R$ on $\msf{T}$ and $\msf{U}$ are canonical, we say that $f^*\colon \msf{T} \to \msf{U}$ is a \emph{canonical $R$-linear functor}.
\end{defn}

Recall that given a geometric functor $f^*\colon \msf{T} \to \msf{U}$ between rigidly-compactly generated tensor-triangulated categories, we obtain an adjoint triple $f^* \dashv f_* \dashv f^{(1)}$ from Lemma~\ref{lem:Gbuilds}(\ref{item:adjoint}).
 \begin{lem}[{\cite[Theorem 7.7]{BIK12}}]\label{lem:commutewithL}
Let $f^*\colon \msf{T} \to \msf{U}$ be a geometric $R$-linear functor between rigidly-compactly generated tensor-triangulated categories. Let $\V$ be a specialization closed subset of $\mrm{Spec}R$, $X \in \msf{T}$ and $Y \in \msf{U}$. There are natural isomorphisms: 
\[
\begin{array}{ccc}
\Gamma_\V (f^*X) \simeq f^*\Gamma_\V X & & \Gamma_\V (f_*Y) \simeq f_*\Gamma_\V Y \\
L_{\V^c} (f^*X) \simeq f^*L_{\V^c} X & & L_{\V^c} (f_*Y) \simeq f_*L_{\V^c} Y \\ 
\Lambda_\V (f^{(1)}X) \simeq f^{(1)}\Lambda_\V X & & \Lambda_\V (f_*Y) \simeq f_*\Lambda_\V Y \\
V_{\V^c} (f^{(1)}X) \simeq f^{(1)}V_{\V^c} X & &V_{\V^c} (f_*Y) \simeq f_*V_{\V^c} Y. 
\end{array}\]
\end{lem}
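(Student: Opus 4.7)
The strategy is to reduce all eight isomorphisms to the single key identity $f^*\Gamma_\V\1_\msf{T}\simeq \Gamma_\V\1_\msf{U}$ for the tensor unit, and then derive each stated commutation formally from smashing, the projection formula (Lemma~\ref{lem:Gbuilds}(\ref{item:proj})), and two internal hom identities obtained from the two adjunctions in the adjoint triple.

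\emph{Step 1 (the key identity).} Since $R$ is noetherian, any specialization closed $\V\subseteq\mrm{Spec}R$ can be written as a union $\bigcup_n \V(I_n)$ of closed subsets with $I_n$ finitely generated, and $\Gamma_\V\1$ may be built as a homotopy colimit of stable Koszul objects on the $I_n$, each constructed using only coproducts, cofibres, and the multiplication maps $r\colon\1\to\1$ for $r\in R$. Because $f^*$ is $R$-linear, exact, coproduct-preserving, and strong symmetric monoidal (so $f^*\1_\msf{T}=\1_\msf{U}$ and $f^*(r\cdot \mrm{id}_{\1_\msf{T}})=r\cdot \mrm{id}_{\1_\msf{U}}$), it sends these Koszul objects to their counterparts in $\msf{U}$. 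This gives $f^*\Gamma_\V\1_\msf{T}\simeq\Gamma_\V\1_\msf{U}$; applying $f^*$ to the localization triangle on $\1_\msf{T}$ then also yields $f^*L_{\V^c}\1_\msf{T}\simeq L_{\V^c}\1_\msf{U}$.

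\emph{Step 2 (the $f^*$ and $f_*$ commutations with $\Gamma_\V$ and $L_{\V^c}$).} Since the action is canonical on the compact generator $\1$, the functors $\Gamma_\V$ and $L_{\V^c}$ are smashing. The $f^*$-commutations then follow directly from strong monoidality and Step 1, e.g.\ $f^*\Gamma_\V X\simeq f^*(\Gamma_\V\1_\msf{T}\otimes X)\simeq \Gamma_\V\1_\msf{U}\otimes f^*X\simeq\Gamma_\V f^*X$. The $f_*$-commutations combine smashing with the projection formula:
\[\Gamma_\V f_*Y\simeq \Gamma_\V\1_\msf{T}\otimes f_*Y\simeq f_*(f^*\Gamma_\V\1_\msf{T}\otimes Y)\simeq f_*(\Gamma_\V\1_\msf{U}\otimes Y)\simeq f_*\Gamma_\V Y,\]
and similarly for $L_{\V^c}$.

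\emph{Step 3 (the identities involving $\Lambda_\V$ and $V_{\V^c}$).} Using the descriptions $\Lambda_\V\simeq F(\Gamma_\V\1,-)$ and $V_{\V^c}\simeq F(L_{\V^c}\1,-)$, together with the two internal hom identities $f_*F(f^*A,B)\simeq F(A,f_*B)$ and $f^{(1)}F(A,X)\simeq F(f^*A,f^{(1)}X)$ (both derivable from the respective adjunctions and the projection formula), the remaining four isomorphisms drop out mechanically. For example,
\[f^{(1)}\Lambda_\V X\simeq f^{(1)}F(\Gamma_\V\1_\msf{T},X)\simeq F(f^*\Gamma_\V\1_\msf{T},f^{(1)}X)\simeq F(\Gamma_\V\1_\msf{U},f^{(1)}X)\simeq\Lambda_\V f^{(1)}X,\]
and the identities $f_*\Lambda_\V Y\simeq\Lambda_\V f_*Y$, $f_*V_{\V^c}Y\simeq V_{\V^c}f_*Y$, $f^{(1)}V_{\V^c}X\simeq V_{\V^c}f^{(1)}X$ are analogous.

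The main obstacle is Step 1: it is the only place where $R$-linearity of $f^*$ is used essentially, and it relies on the concrete stable Koszul presentation of $\Gamma_\V\1$. Everything else is a formal manipulation with adjunctions, smashing, and the projection formula.
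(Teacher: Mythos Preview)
Your argument is sound, but it proves a slightly weaker statement than the one asserted. The lemma is stated for an \emph{$R$-linear} functor in the sense of Definition~\ref{defn:compatible}, whereas your Steps~1--3 all rely on the actions being \emph{canonical}: you use the Koszul description of $\Gamma_\V\1$ (which identifies the stable Koszul object with $\Gamma_\V\1$ only via the smashing property), and you invoke $\Gamma_\V X\simeq\Gamma_\V\1\otimes X$, $\Lambda_\V\simeq F(\Gamma_\V\1,-)$, etc., all of which are consequences of the canonical hypothesis (see the paragraph preceding Remark~\ref{rem:notation} in the paper, citing \cite[Proposition~8.1]{BIK08}). Under Hypothesis~\ref{hyp} alone the action need not be canonical, so $\Gamma_\V$ need not be smashing, and your reduction to the unit does not apply.

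The paper itself gives no proof and simply cites \cite[Theorem~7.7]{BIK12}. That result is proved in the non-canonical generality by a different route: one shows directly that an $R$-linear left adjoint and its right adjoint both preserve the subcategory $\msf{T}_\V=\ker L_{\V^c}$ (using the description of $\ker L_{\V^c}$ in terms of $\mrm{Hom}^*_\msf{T}(C,-)_\p$ for compact $C$ and the $R$-linearity of the adjunction), and then appeals to the uniqueness of Bousfield (co)localization triangles. The $\Lambda_\V,V_{\V^c}$ statements then follow by adjunction from the $\Gamma_\V,L_{\V^c}$ ones. For the purposes of this paper your argument suffices, since every subsequent use of the lemma is under the canonical hypothesis; but if you want to match the stated generality you should either add the word ``canonical'' to the hypotheses or replace Steps~1--2 with the support-theoretic argument from \cite{BIK12}.
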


\begin{cor}\label{cor:supp}
Let $f^*\colon \msf{T} \to \msf{U}$ be a geometric $R$-linear functor between rigidly-compactly generated tensor-triangulated categories. If $\mrm{Loc}_\msf{T}(f_*\G_\msf{U}) = \msf{T}$, then for all $X \in \msf{T}$, we have \[\mrm{supp}_\msf{T}(X) = \mrm{supp}_\msf{U}(f^*X) \quad \text{and} \quad  \mrm{cosupp}_\msf{T}(X) = \mrm{cosupp}_\msf{U}(f^{(1)}X).\]
\end{cor}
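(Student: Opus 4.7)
The plan is to deduce both equalities by combining Lemma~\ref{lem:commutewithL} (which gives commutation of $f^*$ with the smashing localizations/torsion functors $L_\V$, $\Gamma_\V$, and analogously of $f^{(1)}$ with $V_{\V^c}$, $\Lambda_\V$) with Proposition~\ref{prop:conservative}, which, under the hypothesis that $f_*\1_\msf{U}$ builds $\1_\msf{T}$, upgrades $f^*$ and $f^{(1)}$ from being merely exact to being conservative. Once these two ingredients are in place, each containment reduces to the observation that a conservative functor detects whether an object is zero.

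More concretely, for the support statement I would first fix $\p \in \mrm{Spec}R$ and apply Lemma~\ref{lem:commutewithL} twice: once with $\V = \upcl(\p)$ to get $L_\p f^*X \simeq f^* L_\p X$, and once with $\V = \downcl(\p)$ to the object $f^* L_\p X$ to obtain
\[
\Gamma_\p L_\p f^*X \;\simeq\; \Gamma_{\downcl(\p)} f^* L_\p X \;\simeq\; f^*\Gamma_\p L_\p X.
\]
Since $f^*$ is conservative by Proposition~\ref{prop:conservative}(\ref{item:up-cons}), this isomorphism shows $\Gamma_\p L_\p f^*X \simeq 0$ in $\msf{U}$ if and only if $\Gamma_\p L_\p X \simeq 0$ in $\msf{T}$, which is precisely $\p \in \mrm{supp}_\msf{U}(f^*X) \iff \p \in \mrm{supp}_\msf{T}(X)$.

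The cosupport statement is formally identical, with $f^*$ replaced by $f^{(1)}$. Using the right-hand column of Lemma~\ref{lem:commutewithL}, I would apply the second (resp.\ fourth) isomorphism with $\V = \downcl(\p)$ and $\V = \upcl(\p)$ to conclude
\[
\Lambda_\p V_\p f^{(1)}X \;\simeq\; f^{(1)}\Lambda_\p V_\p X,
\]
and then invoke conservativity of $f^{(1)}$, again provided by Proposition~\ref{prop:conservative}(\ref{item:up-cons}), to transport non-vanishing between the two sides.

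Almost nothing here is a genuine obstacle: the argument is essentially a bookkeeping exercise once Lemma~\ref{lem:commutewithL} and Proposition~\ref{prop:conservative} are available. The only subtlety worth flagging is the bookkeeping around the indexing convention for $\Gamma_\p L_\p$ versus $\Gamma_\V L_{\V^c}$ (cf.~Remark~\ref{rem:notation}), so one should be careful that $\downcl(\p)$ and $\upcl(\p)$ play the roles of $\V$ in the two successive applications of Lemma~\ref{lem:commutewithL}, and similarly in the cosupport case. Crucially, the whole proof uses the assumption that $f_*\1_\msf{U}$ builds $\1_\msf{T}$ only through Proposition~\ref{prop:conservative}, i.e., only to know that $f^*$ and $f^{(1)}$ are conservative; the equality of (co)support would fail in general without this.
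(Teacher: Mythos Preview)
Your proposal is correct and follows exactly the same approach as the paper, which records the result as a ``straightforward consequence of Proposition~\ref{prop:conservative}(\ref{item:up-cons}) and Lemma~\ref{lem:commutewithL}.'' The only quibbles are cosmetic bookkeeping slips: for $L_\p = L_{\upcl(\p)}$ and $V_\p = V_{\upcl(\p)}$ one should take $\V^c = \upcl(\p)$ (not $\V = \upcl(\p)$) when invoking Lemma~\ref{lem:commutewithL}, and the isomorphisms you need for $f^{(1)}$ sit in the \emph{left}-hand column of that lemma, not the right; your displayed formulas are nonetheless the correct ones.
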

\begin{proof}
This is a straightforward consequence of Proposition~\ref{prop:conservative} and Lemma~\ref{lem:commutewithL}.
\end{proof}

The next lemma was inspired by \cite[Proposition 2.7]{BSW2013}.

\begin{lem}\label{lem:r-zero}
Let $R$ be a graded-commutative noetherian ring, and denote by $R^0$ its subring of homogeneous elements of degree $0$,
and by $\varphi\colon R^0\to R$ the inclusion map.
Suppose that each homogeneous element $a\in R$ such that $|a|\ne 0$ is a nilpotent element.
\begin{enumerate}
\item The map $\mrm{Spec}R \to \mrm{Spec}R^0$ given by $\p \mapsto \varphi^{-1}(\p) = \p\cap R^0$ is bijective.
\item Let $\msf{T}$ be a $R$-linear triangulated category.
Then, considering $\msf{T}$ as a $R^0$-linear triangulated category by restricting the $R$-action along $\varphi$,
for any specialization closed subset $\V$ of $\mrm{Spec}R$ there are isomorphisms of functors
\[
\Gamma_{\V} \cong \Gamma_{\varphi^{-1}(\V)}, \quad \Lambda_{\V} \cong \Lambda_{\varphi^{-1}(\V)},
\quad L_{\V^c} \cong L_{\varphi^{-1}(\V^c)}, \quad V_{\V^c}\cong V_{\varphi^{-1}(\V^c)}.
\]
\end{enumerate}
\end{lem}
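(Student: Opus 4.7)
For part (1), the plan is to exploit the nilpotency hypothesis: since every homogeneous element of nonzero degree is nilpotent, it lies in every homogeneous prime. Thus any $\p \in \mrm{Spec}R$ automatically contains $R^+ := \bigoplus_{n \neq 0} R^n$, and because $\p$ is homogeneous this forces $\p = (\p \cap R^0) \oplus R^+$, which already gives injectivity of $\p \mapsto \p \cap R^0$. For surjectivity I would build the inverse by sending a prime $\p^0$ of $R^0$ to $\p := \p^0 \oplus R^+$, and verify directly that this is a homogeneous prime ideal of $R$ whose degree-zero part recovers $\p^0$. Primality comes for free from the observation that $R/\p \cong R^0/\p^0$ is an integral domain (concentrated in degree zero).

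For part (2), the crucial reduction is that for any $\p \in \mrm{Spec}R$ with $\p^0 := \p \cap R^0$, the multiplicative set of homogeneous elements outside $\p$ equals $R^0 \setminus \p^0$, since every element of $R^+$ already lies in $\p$. Hence for any graded $R$-module $M$, the localization $M_\p$ as an $R$-module coincides with the localization $M_{\p^0}$ as an $R^0$-module after restriction along $\varphi$. Applying this to $M = \mrm{Hom}^*_\msf{T}(C, X)$ for each compact $C$ and each $X \in \msf{T}$, one identifies the vanishing condition defining $\mrm{ker}(L_{\V^c})$ for the $R$-action with that defining $\mrm{ker}(L_{\varphi^{-1}(\V)^c})$ for the $R^0$-action. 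Combined with the easy verification that $\varphi^{-1}$ is inclusion-preserving in both directions, so that it matches specialization-closed sets and satisfies $\varphi^{-1}(\V^c) = \varphi^{-1}(\V)^c$, I would conclude $L_{\V^c} \cong L_{\varphi^{-1}(\V)^c}$ by uniqueness of Bousfield localizations. The defining triangle $\Gamma_\V X \to X \to L_{\V^c} X$ then forces $\Gamma_\V \cong \Gamma_{\varphi^{-1}(\V)}$, and the isomorphisms for $\Lambda_\V$ and $V_{\V^c}$ follow by passing to right adjoints.

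The main obstacle I expect is buried in part (1), namely verifying that $\p^0 \oplus R^+$ really is an ideal: the only non-trivial case is the product of two homogeneous elements of opposite nonzero degrees, whose product lands back in $R^0$. Here the nilpotency hypothesis is used essentially, since such a product is nilpotent in $R^0$ and therefore lies in $\p^0$. Once this one step is settled, part (2) is essentially bookkeeping, because the (co)localization functors are determined by localizations of graded $\mrm{Hom}$-modules which, by the reduction above, depend only on the degree-zero slice of the prime.
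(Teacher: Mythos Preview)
Your proposal is correct. For part (1) you follow essentially the same idea as the paper---every homogeneous prime contains $R^+$ and is therefore determined by its degree-zero part---though you are more explicit about surjectivity, constructing the inverse $\p^0 \mapsto \p^0 \oplus R^+$ and checking it is prime, whereas the paper simply asserts bijectivity once injectivity is clear.

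For part (2) your route is genuinely different from the paper's. The paper dispatches (2) in one line by invoking \cite[Theorem 7.7]{BIK12} applied to the identity functor $(\msf{T},R^0)\to(\msf{T},R)$: that general change-of-rings result immediately matches the (co)localization functors once one knows from (1) that the spectra correspond bijectively. You instead unpack the definition of $L_{\V^c}$ directly: observing that the homogeneous multiplicative set $R\setminus\p$ equals $R^0\setminus\p^0$ (since $R^+\subseteq\p$), you identify the graded localizations $\mrm{Hom}^*_\msf{T}(C,X)_\p$ and $\mrm{Hom}^*_\msf{T}(C,X)_{\p^0}$, hence the kernels of the two Bousfield localizations, and conclude by uniqueness. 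Your approach is more elementary and self-contained, making transparent exactly how the bijection on primes forces the functors to agree; the paper's approach is shorter but relies on the black-box machinery of \cite{BIK12}. Both are perfectly valid.
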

\begin{proof}
To prove (1) observe that if $\p\in \mrm{Spec}R$,
since $\p$ contains all nilpotent elements, 
by our assumption on $R$ it must contain all homogeneous elements of non-zero degree.
Since $\p$ is homogeneous, this implies that it is uniquely determined by $\p\cap R^0$,
so the map $\mrm{Spec}R \to \mrm{Spec}R^0$ is a bijection.
Now (2) follows from (1) and \cite[Theorem 7.7]{BIK12}, applied to the identity functor $(\msf{T},R^0) \to (\msf{T},R)$.
\end{proof}

\subsection{Descent of stratification} In this section we give a sufficient condition for stratification to descend along a geometric functor of rigidly-compactly generated tensor-triangulated categories.

Recall from~\cite{BIK11} that an $R$-linear tensor-triangulated category $\msf{T}$ is said to be \emph{stratified by $R$} if the following minimality condition holds: for each $\p \in \mrm{Spec}R$, the localizing tensor ideal $\Gamma_\p L_\p \msf{T}$ is either zero or minimal (i.e., it has no proper non-zero localizing tensor ideals).
Note that the minimality condition can be restated as for every non-zero $X,Y \in \Gamma_\p L_\p \msf{T}$, we have $\mrm{Loc}^\otimes_\msf{T}(X) = \mrm{Loc}^\otimes_\msf{T}(Y)$. We also note that since we work in the tensor-triangulated setting, the local-to-global principle automatically holds~\cite[Theorem 8.6]{BIK12}.

\begin{thm}\label{thm:stratdescends}
Let $f^*\colon \msf{T} \to \msf{U}$ be a geometric canonical $R$-linear functor between rigidly-compactly generated tensor-triangulated categories. Suppose that $\mrm{Loc}_\msf{T}(f_*\G_\msf{U}) = \msf{T}$ and that $\msf{U}$ is stratified by $R$. Then $\msf{T}$ is stratified by the action of $R$.
\end{thm}
\begin{proof}
Given a non-zero $X \in \Gamma_\p L_\p \msf{T}$, it is sufficient to show that $\mrm{Loc}^\otimes_\msf{T}(X) = \mrm{Loc}^\otimes_\msf{T}(\Gamma_\p L_\p \1_\msf{T})$.
Firstly we have \[\mrm{Loc}^\otimes_\msf{T}(X) = \mrm{Loc}_\msf{T}(f_*\G_U \otimes X) = \mrm{Loc}_\msf{T}(f_*(\G_\msf{U} \otimes f^*X))\] using that $f_*\G_\msf{U}$ generates $\msf{T}$ and the projection formula (Lemma~\ref{lem:Gbuilds}(\ref{item:proj})). 

Since $f^*$ is conservative by Proposition~\ref{prop:conservative}, the object $f^*X$ is non-zero, and is in $\Gamma_\p L_\p \msf{U}$ by Lemma~\ref{lem:commutewithL}. Therefore as $\msf{U}$ is stratified by the action of $R$, we have that \[\mrm{Loc}_\msf{U}(\G_\msf{U} \otimes f^*X) = \mrm{Loc}^\otimes_\msf{U}(f^*X) = \mrm{Loc}^\otimes_\msf{U}(\Gamma_\p L_\p \1_\msf{U}) = \mrm{Loc}_\msf{U}(\G_\msf{U} \otimes \Gamma_\p L_\p \1_\msf{U}).\] Combining this with Lemma~\ref{lem:Gbuilds}(\ref{item:builds}) we obtain that $\mrm{Loc}_\msf{T}(f_*(\G_\msf{U} \otimes f^*X)) = \mrm{Loc}_\msf{T}(f_*(\G_\msf{U} \otimes \Gamma_\p L_\p\1_\msf{U}))$. Since $f^*$ is strong monoidal, we have $f^*\1_\msf{T} \simeq \1_\msf{U}$, so that \[\mrm{Loc}_\msf{T}(f_*(\G_\msf{U} \otimes \Gamma_\p L_\p\1_\msf{U})) = \mrm{Loc}_\msf{T}(f_*(\G_\msf{U} \otimes f^*\Gamma_\p L_\p\1_\msf{T})) = \mrm{Loc}_\msf{T}(f_*\G_\msf{U} \otimes \Gamma_\p L_\p \1_\msf{T}) = \mrm{Loc}^\otimes_\msf{T}(\Gamma_\p L_\p \1_\msf{T})\]
by the projection formula (Lemma~\ref{lem:Gbuilds}(\ref{item:proj})). Combining these we see that $\mrm{Loc}^\otimes_\msf{T}(X) = \mrm{Loc}^\otimes_\msf{T}(\Gamma_\p L_\p \1_\msf{T})$ as required.
\end{proof}

We now give some consequences of the previous result.
\begin{cor}\label{cor:locofTandU}
Let $f^*\colon \msf{T} \to \msf{U}$ be a geometric canonical $R$-linear functor between rigidly-compactly generated tensor-triangulated categories. Suppose that $\mrm{Loc}_\msf{T}(f_*\G_\msf{U}) = \msf{T}$, $f_*$ is conservative and that $\msf{U}$ is stratified by $R$. Then there is a bijection \[\{\mrm{localizing~tensor~ideals~of~} \msf{T}\} \xlongleftrightarrow{\cong} \{\mrm{localizing~tensor~ideals~of~} \msf{U}\} \] given by $\msf{L} \mapsto \mrm{Loc}^\otimes_\msf{U}(f^*X \mid X \in \msf{L}).$ Moreover, if $\msf{T}$ and $\msf{U}$ are noetherian (i.e., $\mrm{End}^*_\msf{T}(\1_\msf{T})$ and $\mrm{End}^*_\msf{U}(\1_\msf{U})$ are finitely generated $R$-modules), then there is a bijection \[\{\mrm{thick~tensor~ideals~of~} \msf{T}^\omega\} \xlongleftrightarrow{\cong} \{\mrm{thick~tensor~ideals~of~} \msf{U}^\omega\} \] given by $\mathsf{S} \mapsto \mrm{Thick}^\otimes_\msf{U}(f^*X \mid X \in \msf{S}).$
\end{cor}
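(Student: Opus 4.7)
The plan is to combine Theorem~\ref{thm:stratdescends} with the Benson--Iyengar--Krause classification theorem for stratified categories, and then check that the composite bijection is given by the stated explicit formula.

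First, by Theorem~\ref{thm:stratdescends}, the hypotheses imply that $\msf{T}$ is also stratified by $R$, so both $\msf{T}$ and $\msf{U}$ are stratified. The BIK classification (see \cite[Theorem 4.2]{BIK11}) then yields bijections
\[
\{\text{loc.\ subcat.\ of } \msf{T}\} \;\xlongleftrightarrow{\cong}\; \{\text{subsets of } \mrm{supp}_\msf{T}(\1_\msf{T})\} \;\xlongleftrightarrow{\cong}\; \{\text{loc.\ subcat.\ of } \msf{U}\},
\]
where the left bijection sends $\msf{L} \mapsto \bigcup_{X\in \msf{L}}\mrm{supp}_\msf{T}(X)$ with inverse $S \mapsto \{X \in \msf{T} \mid \mrm{supp}_\msf{T}(X) \subseteq S\}$, and similarly on the right. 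Since $f^*$ is strong symmetric monoidal, $f^*\1_\msf{T} \simeq \1_\msf{U}$, and so Corollary~\ref{cor:supp} gives $\mrm{supp}_\msf{T}(\1_\msf{T}) = \mrm{supp}_\msf{U}(\1_\msf{U})$, so the two "subset" sides are literally the same set and the composite makes sense.

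The main verification is that the composite bijection equals the stated map $\msf{L} \mapsto \mrm{Loc}_\msf{U}(f^*X \mid X \in \msf{L})$. Given $\msf{L}$, the composite sends it to the localizing subcategory
\[
\{Y \in \msf{U} \mid \mrm{supp}_\msf{U}(Y) \subseteq \textstyle\bigcup_{X \in \msf{L}} \mrm{supp}_\msf{T}(X)\}.
\]
Using Corollary~\ref{cor:supp} to rewrite $\mrm{supp}_\msf{T}(X) = \mrm{supp}_\msf{U}(f^*X)$, and invoking the BIK bijection once more on $\msf{U}$, this is exactly the localizing subcategory corresponding to the subset $\bigcup_{X\in \msf{L}} \mrm{supp}_\msf{U}(f^*X)$, which by the same bijection is $\mrm{Loc}_\msf{U}(f^*X \mid X \in \msf{L})$, as desired.

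For the thick-subcategory statement, one runs the same argument, replacing the BIK classification of localizing subcategories by the classification of thick subcategories of compact objects under the noetherian hypothesis (see \cite[Theorem 6.1]{BIK11}): under stratification with $\mrm{End}^*_\msf{T}(\1_\msf{T})$ and $\mrm{End}^*_\msf{U}(\1_\msf{U})$ finitely generated over $R$, the thick subcategories of $\msf{T}^\omega$ correspond bijectively to specialization-closed subsets of $\mrm{supp}_\msf{T}(\1_\msf{T})$, and analogously for $\msf{U}$. Composing these two bijections via the common indexing set yields the stated bijection; the explicit form $\msf{S} \mapsto \mrm{Thick}_\msf{U}(f^*X \mid X \in \msf{S})$ follows exactly as before, noting that $f^*$ preserves compactness because it is left adjoint to the coproduct-preserving functor $f_*$ (Lemma~\ref{lem:Gbuilds}(\ref{item:adjoint})), so $f^*X \in \msf{U}^\omega$ whenever $X \in \msf{T}^\omega$. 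The only mildly subtle point is the identification of the composite with the displayed formula, but this is essentially a bookkeeping exercise once the BIK bijection and Corollary~\ref{cor:supp} are in hand.
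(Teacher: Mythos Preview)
Your proposal is correct and follows essentially the same approach as the paper: both invoke Theorem~\ref{thm:stratdescends} to get $\msf{T}$ stratified, identify $\mrm{supp}_\msf{T}(\1_\msf{T}) = \mrm{supp}_\msf{U}(\1_\msf{U})$ via Corollary~\ref{cor:supp}, and then verify that the composite of the two BIK bijections through this common indexing set agrees with the displayed formula (the paper phrases this as commutativity of a triangle, you as computing a composite, but the content is identical). Your additional remark that $f^*$ preserves compacts is a helpful observation the paper leaves implicit.
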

\begin{proof}
We prove the claim about localizing tensor ideals first. Since $\msf{U}$ is stratified by $R$, $\msf{T}$ is also stratified by $R$ by Theorem~\ref{thm:stratdescends}. Note that \[\mrm{supp}(\msf{T}) = \bigcup_{G \in \G_\msf{U}} \mrm{supp}_\msf{T}(f_*G) = \bigcup_{G \in \G_\msf{U}} \mrm{supp}_\msf{U}(G) = \mrm{supp}(\msf{U})\] since $f_*$ is conservative. Consider the diagram 
\[\begin{tikzcd}\left\{\begin{tabular}{c} localizing tensor \\ ideals of $\msf{T}$ \end{tabular}\right\} \arrow[rr,"\msf{L} \mapsto \mrm{Loc}^\otimes_\msf{U}(f^*X \mid X \in \msf{L})"] \arrow[dr, "\mrm{supp}"'] & & \left\{\begin{tabular}{c} localizing tensor \\ ideals of $\msf{U}$ \end{tabular}\right\} \arrow[dl, "\mrm{supp}"] \\
 & \{\text{subsets of $\mrm{supp}(\msf{T})$}\} & \end{tikzcd}\]
in which the diagonals are bijections by~\cite[Theorem 3.8]{BIK11b}, since $\msf{T}$ and $\msf{U}$ are stratified by $R$. Therefore it suffices to verify that the diagram commutes; that is for a localizing tensor ideal $\msf{L}$ of $\msf{T}$, we need to check that \[\bigcup_{Y \in \mrm{Loc}^\otimes_\msf{U}(f^*X \mid X \in \msf{L})} \mrm{supp}_\msf{U}(Y) = \bigcup_{X \in \msf{L}} \mrm{supp}_\msf{T}(X).\] The reverse inclusion is clear from Corollary~\ref{cor:supp}. For the forward inclusion, note that since the diagonals are isomorphisms, for any $Y \in \mrm{Loc}^\otimes_\msf{U}(f^*X \mid X \in \msf{L})$ we have $\mrm{supp}_\msf{U}(Y) \subseteq \bigcup_{X \in \msf{L}} \mrm{supp}_{\msf{U}}(f^*X)$. Hence by Corollary~\ref{cor:supp} the forward inclusion holds. The claim about thick tensor ideals follows similarly, using the tensor-triangulated analogue of~\cite[Theorem 6.1]{BIK11}.
\end{proof}

\begin{rem}
The hypothesis that $f_*$ is conservative is often satisfied in practice since $f_*$ frequently takes the form of a forgetful functor. We direct the reader to Lemma~\ref{lem:conservativeifunit} for a condition which guarantees this in all of the examples we study in this paper.
\end{rem}

\subsection{Descent of costratification} In this section, we give a sufficient condition for costratification to descend along a geometric functor between rigidly-compactly generated tensor-triangulated categories.

Recall from~\cite{BIK12} that an $R$-linear tensor-triangulated category $\msf{T}$ is said to be \emph{costratified by $R$} if the following minimality condition holds:
for each $\p \in \mrm{Spec}R$, the hom-closed colocalizing subcategory $\Lambda_\p V_\p \msf{T}$ is either zero or minimal (i.e., it has no proper non-zero hom-closed colocalizing subcategories). We note that the local-to-global principle holds automatically since we are in the tensor-triangulated setting~\cite[Theorem 8.6]{BIK12}.

\begin{lem}\label{lem:locimpliescoloc}
Let $\msf{T}$ be a tensor-triangulated category and let $X, Y \in \msf{T}$. If $\mrm{Loc}_\msf{T}(X) = \mrm{Loc}_\msf{T}(Y)$, then $\mrm{Coloc}_\msf{T}(F(X,Z)) = \mrm{Coloc}_\msf{T}(F(Y,Z))$ for any $Z \in \msf{T}$.
\end{lem}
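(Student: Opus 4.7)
The plan is to show that contravariantly applying $F(-, Z)$ carries the formation of localizing subcategories to the formation of colocalizing subcategories. Concretely, I will prove a slightly stronger one-sided statement: if $X$ builds $Y$ in $\msf{T}$, then $F(X, Z)$ cobuilds $F(Y, Z)$, i.e.\ $F(Y, Z) \in \mrm{Coloc}_\msf{T}(F(X, Z))$. The desired equality of colocalizing subcategories then follows by applying this to both inclusions $\mrm{Loc}_\msf{T}(X) \supseteq \mrm{Loc}_\msf{T}(Y)$ and $\mrm{Loc}_\msf{T}(X) \subseteq \mrm{Loc}_\msf{T}(Y)$.

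To prove the one-sided statement, I would introduce the auxiliary full subcategory
\[
\msf{C} = \{W \in \msf{T} \mid F(W, Z) \in \mrm{Coloc}_\msf{T}(F(X, Z))\}
\]
and show that it is a localizing subcategory of $\msf{T}$ containing $X$. Since $\mrm{Loc}_\msf{T}(X) = \mrm{Loc}_\msf{T}(Y)$ by hypothesis, this immediately forces $Y \in \msf{C}$ and hence $F(Y,Z) \in \mrm{Coloc}_\msf{T}(F(X,Z))$.

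The verification that $\msf{C}$ is localizing rests on the standard formal properties of the internal hom in the first variable. First, $F(-, Z)$ is a cohomological functor, so it sends triangles in $\msf{T}$ to (reversed) triangles; combined with the two-out-of-three property for the thick subcategory $\mrm{Coloc}_\msf{T}(F(X,Z))$, this gives that $\msf{C}$ is triangulated. Similarly, since retracts are preserved by any functor, $\msf{C}$ is closed under retracts. The key point is closure under coproducts: for any set $\{A_i\}$ one has the natural isomorphism $F(\bigoplus_i A_i, Z) \cong \prod_i F(A_i, Z)$, and since $\mrm{Coloc}_\msf{T}(F(X,Z))$ is by definition closed under arbitrary products, we conclude that $\msf{C}$ is closed under coproducts. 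Finally, $X \in \msf{C}$ is tautological.

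The main obstacle is essentially bookkeeping rather than a genuine difficulty; the only subtle ingredient is the conversion of a coproduct in the source to a product in the target via the internal hom, which is where the asymmetry between localizing and colocalizing subcategories is reconciled.
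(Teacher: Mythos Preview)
Your proof is correct and is exactly the standard unpacking of the paper's one-line argument, which simply says that the result follows because $F(-,Z)$ is exact and sends coproducts to products. Your auxiliary subcategory $\msf{C}$ is precisely the device that makes this sentence rigorous, so there is no meaningful difference in approach.
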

\begin{proof}
This follows from the fact that $F(-,Z)$ is exact and sends coproducts to products.
\end{proof}

\begin{thm}\label{thm:costratdescends}
Let $f^*\colon \msf{T} \to \msf{U}$ be a geometric canonical $R$-linear functor between rigidly-compactly generated tensor-triangulated categories. Suppose that $\mrm{Loc}_\msf{T}(f_*\G_\msf{U}) = \msf{T}$ and that $\msf{U}$ is costratified by $R$. Then $\msf{T}$ is costratified by the action of $R$.
\end{thm}
\begin{proof}
Given any non-zero $X \in \Lambda_\p V_\p \msf{T}$ it is sufficient to show that we have \[\mrm{Coloc}^\mrm{Hom}_\msf{T}(X) = \mrm{Coloc}^\mrm{Hom}_\msf{T}(\Lambda_\p V_\p \1_\msf{T}).\] 

Firstly,
\[\mrm{Coloc}^\mrm{Hom}_\msf{T}(X) = \mrm{Coloc}_\msf{T}(F(f_*\G_\msf{U}, X)) = \mrm{Coloc}_\msf{T}(f_*F(\G_\msf{U}, f^{(1)}X))\] by Lemma~\ref{lem:Gbuilds}(\ref{item:adj}). Since $f^{(1)}$ is conservative by Proposition~\ref{prop:conservative}, we have that $f^{(1)}X$ is non-zero. As $\msf{U}$ is costratified, it follows that 
$\mrm{Coloc}_\msf{U}(F(\G_\msf{U}, f^{(1)}X)) = \mrm{Coloc}^\mrm{Hom}_\msf{U}(\Lambda_\p V_\p f^{(1)}\1_\msf{T})$ by minimality since $f^{(1)}X \in \Lambda_\p V_\p \msf{T}$ by Lemma~\ref{lem:commutewithL}. It follows that
\begin{align*}
\mrm{Coloc}_\msf{T}(f_*F(\G_\msf{U}, f^{(1)}X)) &= \mrm{Coloc}_\msf{T}(f_*F(\G_\msf{U}, f^{(1)}\Lambda_\p V_\p \1_\msf{T})) \\
&= \mrm{Coloc}_\msf{T}(F(f_*\G_\msf{U}, \Lambda_\p V_\p\1_\msf{T})) \\
&= \mrm{Coloc}^\mrm{Hom}_\msf{T}(\Lambda_\p V_\p\1_\msf{T})\end{align*} by Lemmas~~\ref{lem:Gbuilds}(\ref{item:builds}) and~\ref{lem:Gbuilds}(\ref{item:adj}) which completes the proof.
\end{proof}

\begin{rem}\label{rem:Quillenlifting}
Barthel-Castellana-Heard-Valenzuela~\cite{BCHV, BCHV2} have investigated descent properties of (co)stratification along a map $f\colon R \to S$ of commutative noetherian ring spectra, giving sufficient conditions for the (co)stratification of $\msf{D}(S)$ by $\pi_*S$ to imply (co)stratification of $\msf{D}(R)$ by $\pi_*R$. This involves a notion of Quillen lifting which ensures that prime ideals of $\pi_*S$ can be `realized' by prime ideals of $\pi_*R$. They applied this theory to deduce (co)stratification results for cochain spectra $C^*(X; \mathbb{F}_p)$ for certain spaces $X$, most notably for a large class of classifying spaces of topological groups, extending work of Benson-Iyengar-Krause~\cite{BIK12} and Benson-Greenlees~\cite{BensonGreenlees14}. We note that the setup differs for us - we are interested in cases where the same ring acts on both categories, and therefore the Quillen lifting hypothesis is not relevant for us.
We further remark that the lifting result for costratification given in \cite{BCHV} assumes that the map $f\colon R \to S$
has a retract, an assumption that rarely holds in the applications we give below.
\end{rem} 

\begin{cor}\label{cor:colocofTandU}
Let $f^*\colon \msf{T} \to \msf{U}$ be a geometric canonical $R$-linear functor between rigidly-compactly generated tensor-triangulated categories. Suppose that $\mrm{Loc}_\msf{T}(f_*\G_\msf{U}) = \msf{T}$, $f_*$ is conservative and that $\msf{U}$ is costratified by $R$. Then there is a bijection \[\{\mrm{hom}\text{-}\mrm{closed~colocalizing~subcategories~of~} \msf{T}\} \xlongleftrightarrow{\cong} \{\mrm{hom}\text{-}\mrm{closed~colocalizing~subcategories~of~} \msf{U}\} \] given by $\mathsf{C} \mapsto \mrm{Coloc}^\mrm{Hom}_\msf{U}(f^{(1)}X \mid X \in \msf{C}).$
\end{cor}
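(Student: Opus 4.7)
The plan is to mirror the proof of Corollary~\ref{cor:locofTandU}, substituting ``colocalizing'' for ``localizing'', ``cosupport'' for ``support'', and ``costratified'' for ``stratified''. The first step is to apply Theorem~\ref{thm:costratdescends} to deduce that $\msf{T}$ is itself costratified by $R$. Corollary~\ref{cor:supp}, combined with the fact that $f^*$ is strong monoidal, then yields the equalities $\mrm{supp}_\msf{T}(\1_\msf{T}) = \mrm{supp}_\msf{U}(\1_\msf{U})$ and $\mrm{cosupp}_\msf{T}(X) = \mrm{cosupp}_\msf{U}(f^{(1)}X)$ for every $X \in \msf{T}$.

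By~\cite[Theorem 9.7]{BIK12}, for any costratified tensor-triangulated category cosupport sets up a bijection between its colocalizing subcategories and subsets of the support of the tensor unit. I would therefore consider the triangle
\[\begin{tikzcd}\left\{\begin{tabular}{c} colocalizing \\ subcategories of $\msf{T}$ \end{tabular}\right\} \arrow[rr,"\msf{C} \mapsto \mrm{Coloc}_\msf{U}(f^{(1)}X \mid X \in \msf{C})"] \arrow[dr, "\mrm{cosupp}"'] & & \left\{\begin{tabular}{c} colocalizing \\ subcategories of $\msf{U}$ \end{tabular}\right\} \arrow[dl, "\mrm{cosupp}"] \\
 & \{\text{subsets of $\mrm{supp}_\msf{T}(\1_\msf{T})$}\} & \end{tikzcd}\]
in which both diagonals are bijections; the horizontal arrow is then forced to be a bijection once the triangle is shown to commute.

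Commutativity amounts to the equality \[\bigcup_{X \in \msf{C}} \mrm{cosupp}_\msf{T}(X) = \bigcup_{Y \in \mrm{Coloc}_\msf{U}(f^{(1)}X \mid X \in \msf{C})} \mrm{cosupp}_\msf{U}(Y)\] for every colocalizing subcategory $\msf{C}$ of $\msf{T}$. The $\subseteq$ direction follows immediately from the identity $\mrm{cosupp}_\msf{T}(X) = \mrm{cosupp}_\msf{U}(f^{(1)}X)$. The step I expect to be the main obstacle is the reverse inclusion: one must show that passing to the colocalizing subcategory of $\msf{U}$ generated by $\{f^{(1)}X \mid X \in \msf{C}\}$ does not enlarge the ambient union of cosupports. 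This is exactly where the costratification of $\msf{U}$ is essential: the bijection of~\cite[Theorem 9.7]{BIK12} guarantees that every $Y$ in this colocalizing subcategory satisfies $\mrm{cosupp}_\msf{U}(Y) \subseteq \bigcup_{X \in \msf{C}} \mrm{cosupp}_\msf{U}(f^{(1)}X)$, and a final application of Corollary~\ref{cor:supp} rewrites this union as $\bigcup_{X \in \msf{C}} \mrm{cosupp}_\msf{T}(X)$, closing the argument.
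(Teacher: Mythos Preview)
Your proposal is correct and follows essentially the same approach as the paper, which simply says the proof is analogous to that of Corollary~\ref{cor:locofTandU} using~\cite[Proposition 5.6]{BIK12}. Two minor remarks: first, the stated hypothesis that $\msf{U}$ is \emph{stratified} appears to be a typo for \emph{costratified} (otherwise neither your argument nor the paper's sketch goes through), and you have correctly worked with the intended hypothesis; second, the bijection between colocalizing subcategories and subsets of $\mrm{supp}_\msf{T}(\1_\msf{T})$ under costratification is~\cite[Proposition 5.6]{BIK12} rather than~\cite[Theorem 9.7]{BIK12}, the latter being the statement that costratification implies stratification.
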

\begin{proof}
The proof is similar to the proof of Corollary~\ref{cor:locofTandU}, using~\cite[Corollary 9.2]{BIK12}.
\end{proof}

\begin{prop}\label{prop:strat-rzero}
Let $R$ be a graded-commutative noetherian ring, and denote by $R^0$ its subring of homogeneous elements of degree $0$.
Suppose that each homogeneous element $a\in R$ such that $|a|\ne 0$ is a nilpotent element.
Let $\msf{T}$ be a $R$-linear triangulated category, 
so that $\msf{T}$ is also a $R^0$-linear triangulated category by restriction of the $R$-action.
Then $\msf{T}$ is stratified (resp., costratified) by the action of $R$ if and only if $\msf{T}$ is stratified (resp., costratified) by the action of $R^0$.
\end{prop}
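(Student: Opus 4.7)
The plan is to use Lemma~\ref{lem:r-zero} to match up the torsion and completion data for the $R$-action on $\msf{T}$ with that for the restricted $R^0$-action, and then observe that under these identifications the defining conditions of (co)stratification for the two actions become literally the same.

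First, by Lemma~\ref{lem:r-zero}(1), the map $\varphi^{-1}\colon \mrm{Spec}R \to \mrm{Spec}R^0$, $\p \mapsto \p \cap R^0$, is a bijection. I would next verify that this bijection is an order isomorphism: as noted in the proof of Lemma~\ref{lem:r-zero}(1), any $\p \in \mrm{Spec}R$ is uniquely determined by $\p \cap R^0$ together with the (fixed) set of homogeneous elements of nonzero degree, all of which lie in $\p$ by nilpotency. Hence $\p \subseteq \p'$ if and only if $\p \cap R^0 \subseteq \p' \cap R^0$, so $\varphi^{-1}$ is an isomorphism of posets.

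Writing $\q = \varphi^{-1}(\p)$, the poset isomorphism immediately gives $\varphi^{-1}(\downcl(\p)) = \downcl(\q)$ and $\varphi^{-1}(\upcl(\p)) = \upcl(\q)$. Noting that $\upcl(\p)^c$ is specialization closed with $\varphi^{-1}(\upcl(\p)^c) = \upcl(\q)^c$, applying Lemma~\ref{lem:r-zero}(2) to the specialization closed sets $\downcl(\p)$ and $\upcl(\p)^c$ yields natural isomorphisms of functors on $\msf{T}$
\[\Gamma_\p L_\p \cong \Gamma_\q L_\q \quad \text{and} \quad \Lambda_\p V_\p \cong \Lambda_\q V_\q,\]
where the right hand side is computed with respect to the $R^0$-action.

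With these identifications in hand, both the local-to-global principle and the minimality condition for the $R$-action translate term-by-term into the same statements for the $R^0$-action: for any $X \in \msf{T}$, the collections $\{\Gamma_\p L_\p X \mid \p \in \mrm{Spec}R\}$ and $\{\Gamma_\q L_\q X \mid \q \in \mrm{Spec}R^0\}$ coincide as sets of objects of $\msf{T}$, and for each corresponding pair $(\p,\q)$ the essential images $\Gamma_\p L_\p \msf{T}$ and $\Gamma_\q L_\q \msf{T}$ (and analogously $\Lambda_\p V_\p \msf{T}$ and $\Lambda_\q V_\q \msf{T}$) agree. Consequently, $\msf{T}$ is stratified (resp., costratified) by $R$ if and only if it is stratified (resp., costratified) by $R^0$. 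No serious obstacle arises here; the only substantive step is the observation that $\varphi^{-1}$ is a poset isomorphism, which is essentially already contained in the proof of Lemma~\ref{lem:r-zero}(1), after which the argument is purely formal.
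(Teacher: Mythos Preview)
Your proof is correct and takes essentially the same approach as the paper, which simply says ``This follows immediately from Lemma~\ref{lem:r-zero}.'' You have spelled out the implicit details—in particular the observation that $\varphi^{-1}$ is a poset isomorphism, so that the sets $\downcl(\p)$ and $\upcl(\p)$ correspond to $\downcl(\q)$ and $\upcl(\q)$ under the bijection—which is exactly what is needed to translate the (co)stratification conditions term-by-term via Lemma~\ref{lem:r-zero}(2).
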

\begin{proof}
This follows immediately from Lemma~\ref{lem:r-zero}.
\end{proof}

\subsection{Generation by the unit}
Consider our standard setup of a geometric functor $f^*\colon \msf{T} \to \msf{U}$ between rigidly-compactly generated tensor-triangulated categories. In this subsection, we specialize our results to the case when $\msf{T}$ and $\msf{U}$ are generated by their tensor units. Under this assumption, we are able to give a converse to our main result; in addition, the examples we focus on in the remainder of this paper are all generated by their tensor units. 

\begin{lem}\label{lem:conservativeifunit}
Let $f^*\colon \msf{T} \to \msf{U}$ be a geometric functor between rigidly-compactly generated tensor-triangulated categories. Assume moreover that $\msf{T}$ and $\msf{U}$ are generated by their tensor units. Then the functor $f_*$ is conservative.
\end{lem}
\begin{proof}
Let $X \in \msf{U}$. Since $f^*$ is strong monoidal
we have $f_*X \simeq 0$ if and only if $\mrm{Hom}_\msf{T}^*(\1_\msf{T}, f_*X) = 0$
if and only if $\mrm{Hom}_\msf{U}^*(\1_\msf{U}, X) = 0$ if and only if $X \simeq 0$.
\end{proof}

We can now give converses to our main results under a mild additional assumption which holds in all of the examples we study.
\begin{cor}\label{cor:equiv-stra}
Let $f^*\colon \msf{T} \to \msf{U}$ be a geometric canonical $R$-linear functor between rigidly-compactly
generated tensor-triangulated categories which are generated by their tensor units.
Assume further that $\mrm{supp}_\msf{T}(\1_\msf{T}) \subseteq \mrm{supp}_\msf{U}(\1_\msf{U})$,
and that $\msf{U}$ is stratified by $R$.
Then the following are equivalent:
\begin{enumerate}
\item\label{item:eq-build} $f_*\1_\msf{U}$ builds $\1_\msf{T}$; 
\item\label{item:eq-stra} $\msf{T}$ is stratified by the action of $R$.
\end{enumerate}
\end{cor}
\begin{proof}
The fact that (1) implies (2) is contained in Theorem~\ref{thm:stratdescends}.
To see the converse, note that by Lemmas~\ref{lem:commutewithL} and~\ref{lem:conservativeifunit},
there is an equality $\mrm{supp}_\msf{U}(\1_\msf{U}) = \mrm{supp}_\msf{T}(f_*\1_\msf{U})$,
so the assumption that $\mrm{supp}_\msf{T}(\1_\msf{T}) \subseteq \mrm{supp}_\msf{U}(\1_\msf{U})$
implies that $\mrm{supp}_\msf{T}(\1_\msf{T}) \subseteq \mrm{supp}_\msf{T}(f_*\1_\msf{U})$.
The result now follows from~\cite[Theorem 4.2]{BIK11}.
\end{proof}

\begin{cor}\label{cor:equiv-costra}
Let $f^*\colon \msf{T} \to \msf{U}$ be a geometric canonical $R$-linear functor between rigidly-compactly
generated tensor-triangulated categories which are generated by their tensor units.
Assume further that $\mrm{supp}_\msf{T}(\1_\msf{T}) \subseteq \mrm{supp}_\msf{U}(\1_\msf{U})$,
and that $\msf{U}$ is costratified by $R$.
Then the following are equivalent:
\begin{enumerate}
\item\label{item:eq-build2} $f_*\1_\msf{U}$ builds $\1_\msf{T}$;
\item\label{item:eq-costra} $\msf{T}$ is costratified by the action of $R$.
\end{enumerate}
\end{cor}
\begin{proof}
The fact that (1) implies (2) is contained in Theorem~\ref{thm:costratdescends}.
For the converse, note that by~\cite[Theorem 9.7]{BIK12},
the assumption that $\msf{T}$ is costratified by the action of $R$ implies that
$\msf{T}$ is also stratified by the action of $R$,
so this follows from Corollary~\ref{cor:equiv-stra}.
\end{proof}

\begin{rem}
Given an $R$-linear tensor-triangulated category $\msf{T}$, by~\cite[Theorem 9.7]{BIK12} $\msf{T}$ being costratified by $R$ implies that $\msf{T}$ is stratified by $R$. However the converse is not known in general. Combining Corollaries~\ref{cor:equiv-stra} and~\ref{cor:equiv-costra} gives a family of examples when $\msf{T}$ being stratified by $R$ is equivalent to $\msf{T}$ being costratified by $R$. 
\end{rem}

One also obtains versions of Corollary~\ref{cor:locofTandU} and Corollary~\ref{cor:colocofTandU} without the conservativity assumption on $f_*$ by Lemma~\ref{lem:conservativeifunit}. We note that in this case, stratification classifies the localizing subcategories, since when the unit generates any localizing subcategory is automatically a localizing tensor ideal, and similarly for costratification.

\begin{cor}\label{cor:build}
Let $f^*\colon \msf{T} \to \msf{U}$ be a geometric canonical $R$-linear functor between rigidly-compactly
generated tensor-triangulated categories which are generated by their tensor units.
Suppose that $f_*\1_\msf{U}$ builds $\1_\msf{T}$ and that $\msf{U}$ is stratified by $R$. 
Given $X,Y \in \msf{T}$, it holds that $X$ builds $Y$ in $\msf{T}$ if and only if $f^*X$ builds $f^*Y$ in $\msf{U}$.
\end{cor}
\begin{proof}
Since $f^*$ is exact and preserves coproducts, the forward implication is clear. 
For the reverse implication, if $f^*X$ builds $f^*Y$,
since $\msf{U}$ is stratified by $R$,
this implies by \cite[Theorem 4.2]{BIK11} that $\mrm{supp}_\msf{U}(f^*Y) \subseteq \mrm{supp}_\msf{U}(f^*X)$.
Hence, by Corollary~\ref{cor:supp}, we have that 
$\mrm{supp}_\msf{T}(Y) \subseteq \mrm{supp}_\msf{T}(X)$,
and since by Theorem~\ref{thm:stratdescends}, 
$\msf{T}$ is also stratified by $R$, 
we deduce that $X$ builds $Y$ in $\msf{T}$.
\end{proof}

The proof of the next corollary is is completely analogous to the proof of Corollary~\ref{cor:build}, so we omit it. 
\begin{cor}\label{cor:cobuild}
Let $f^*\colon \msf{T} \to \msf{U}$ be a geometric canonical $R$-linear functor between rigidly-compactly
generated tensor-triangulated categories which are generated by their tensor units.
Suppose that $f_*\1_\msf{U}$ builds $\1_\msf{T}$ and that $\msf{U}$ is costratified by $R$. 
Given $X,Y \in \msf{T}$, it holds that $X$ cobuilds $Y$ in $\msf{T}$ if and only if $f^{(1)}X$ cobuilds $f^{(1)}Y$ in $\msf{U}$.
\end{cor}

\section{Non-positive DG-rings}
In this section we apply the results of the previous section to the derived category of a non-positive DG-ring $A$. In particular, we show that for a non-positive commutative DG-ring $A$ with finite amplitude such that $H^0A$ is noetherian, the derived category $\msf{D}(A)$ is stratified and costratified by the action of $H^0A$. 

\subsection{Recollections on DG-rings} 
We give a brief recap of key definitions and features of DG-rings, and refer the reader to~\cite{Yekutielibook} for more details. 

A DG-ring $A$ is a graded ring equipped with a differential $d\colon A \to A$ of degree 1, which satisfies the Leibniz rule. We emphasize that we grade cohomologically. A DG-ring $A$ is said to be commutative if $ab= (-1)^{|a||b|}ba$ for all homogeneous $a,b \in A$, and $a^2 = 0$ if $|a|$ is odd. We say that $A$ is non-positive if $A^i = 0$ for all $i>0$. The zeroth cohomology $H^0A$ of a non-positive DG-ring $A$ is a ring (which is commutative if $A$ is), and comes equipped with a map $A \to H^0A$ of DG-rings. A DG-$A$-module $M$ is a graded $A$-module with a differential of degree 1 which satisfies the Leibniz rule. The DG-$A$-modules form an abelian category and inverting quasi-isomorphisms yields the derived category $\msf{D}(A)$ which is triangulated (and moreover tensor-triangulated if $A$ is commutative). 

Let $A$ be a non-positive DG-ring. For each $n \in \mathbb{Z}$ we have so-called \emph{smart truncation} functors $\mrm{smt^{>n}}, \mrm{smt^{\leq n}}\colon \msf{D}(A) \to \msf{D}(A)$ with the property that \[H^i(\mrm{smt}^{>n}M) = \begin{cases}H^iM & i >n \\ 0 & i \leq n \end{cases} \quad \text{and} \quad H^i(\mrm{smt}^{\leq n}M) = \begin{cases}H^iM & i \leq n \\ 0 & i > n. \end{cases}\] Furthermore there is a triangle \[\mrm{smt}^{\leq n}M \to M \to \mrm{smt}^{>n}M\] in $\msf{D}(A)$. 

For $M \in \msf{D}(A)$ we define $\mrm{sup}(M) = \mrm{sup}\{i \in \mathbb{Z} \mid H^iM \neq 0\}$ and $\mrm{inf}(M) = \mrm{inf}\{i \in \mathbb{Z} \mid H^iM \neq 0\}.$ We write $\msf{D}^{+}(A)$ for the full subcategory of $\msf{D}(A)$ consisting of $M$ with $\mrm{inf}(M) > -\infty$, $\msf{D}^{-}(A)$ for the full subcategory of $M$ with $\mrm{sup}(M) < \infty$, and $\msf{D}^\mrm{b}(A)$ for the intersection $\msf{D}^{+}(A) \cap \msf{D}^{-}(A)$. If $M \in \msf{D}^b(A)$, we may define $\mrm{amp}(M) = \mrm{sup}(M) - \mrm{inf}(M) \in \mathbb{N}$ and say that $M$ has \emph{finite amplitude}.

We say that a non-positive commutative DG-ring $A$ is \emph{noetherian} if $H^0A$ is noetherian, and for all $n<0$, $H^nA$ is a finitely generated $H^0A$-module. The full subcategory of $\msf{D}(A)$ consisting of those $M$ for which each $H^nM$ is a finitely generated $H^0A$-module is denoted by $\msf{D}_\mrm{f}(A)$; we write $\msf{D}^\mrm{b}_\mrm{f}(A)$ for the intersection $\msf{D}^\mrm{b}(A) \cap \msf{D}_\mrm{f}(A)$.

\subsection{Stratification and costratification for DG-rings}
We now apply the results of the previous sections to DG-rings. The following lemma is really just a special case of Lemma~\ref{lem:Gbuilds}(\ref{item:builds}), but we record it here since we will use it throughout (and in slightly more generality than Lemma~\ref{lem:Gbuilds}(\ref{item:builds}) claims).

\begin{lem}\label{lem:buildingdescends}
Let $f\colon A \to B$ be map of DG-rings, and let $M, N \in \msf{D}(B)$. If $M$ builds (resp., finitely builds, resp., cobuilds) $N$ in $\msf{D}(B)$, then $M$ builds (resp., finitely builds, resp., cobuilds) $N$ in $\msf{D}(A)$.
\end{lem}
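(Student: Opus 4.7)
The key observation is that the restriction of scalars functor $f_*\colon \msf{D}(B) \to \msf{D}(A)$ along a map $f\colon A \to B$ of DG-rings is an exact functor which preserves arbitrary coproducts and arbitrary products (the underlying-complex of a (co)product of DG-$B$-modules is computed as the (co)product of the underlying complexes). Given this, the lemma reduces to a formal closure argument, entirely analogous to Lemma~\ref{lem:Gbuilds}(\ref{item:builds}), but without needing the tensor-triangulated hypotheses.

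\textbf{The builds case.} I would fix $M \in \msf{D}(B)$ and consider the full subcategory
\[
\msf{C} = \{ N \in \msf{D}(B) \mid f_*N \in \mrm{Loc}_{\msf{D}(A)}(f_*M)\}.
\]
First, $M \in \msf{C}$ tautologically. Since $f_*$ is exact, $\msf{C}$ is closed under triangles and shifts; since $f_*$ preserves retracts (it is additive), $\msf{C}$ is thick; since $f_*$ preserves arbitrary coproducts, $\msf{C}$ is closed under coproducts in $\msf{D}(B)$. Hence $\msf{C}$ is a localizing subcategory of $\msf{D}(B)$ containing $M$, so $\mrm{Loc}_{\msf{D}(B)}(M) \subseteq \msf{C}$. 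Thus if $N \in \mrm{Loc}_{\msf{D}(B)}(M)$, then $f_*N \in \mrm{Loc}_{\msf{D}(A)}(f_*M)$, which is precisely the claim that $M$ builds $N$ in $\msf{D}(A)$.

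\textbf{The finitely builds and cobuilds cases.} For the finitely builds case, replace $\mrm{Loc}$ by $\mrm{Thick}$ throughout; the coproduct closure step becomes vacuous, and we use only that $f_*$ is exact and preserves retracts. For the cobuilds case, replace $\mrm{Loc}$ by $\mrm{Coloc}$ and use that $f_*$ preserves arbitrary products rather than coproducts.

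\textbf{Expected obstacle.} There really isn't one: the only non-formal input is that restriction of scalars along a DG-ring map preserves arbitrary products and coproducts, which is immediate from the construction of (co)products at the level of underlying graded modules (or complexes of abelian groups) and commutes with taking cohomology. The statement is essentially a convenient packaging of Lemma~\ref{lem:Gbuilds}(\ref{item:builds}) in the DG setting, freed of the tensor-triangulated hypotheses of Hypothesis~\ref{hyp}.
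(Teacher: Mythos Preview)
Your proposal is correct and matches the paper's approach exactly: the paper's proof is the single sentence that restriction of scalars is exact and preserves coproducts and products, and you have simply spelled out the standard closure argument that makes this work.
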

\begin{proof}
This follows from the fact that the restriction of scalars functor $\msf{D}(B) \to \msf{D}(A)$ is exact and preserves coproducts and products.
\end{proof}

The following proposition appears in~\cite[Lemma 1.6]{BILP}. Since it is a fundamental ingredient in our proof of (co)stratification, we recall the proof.
\begin{prop}\label{prop:builds}
Let $A$ be a non-positive DG-ring. If $M \in \msf{D}(A)$ has finite amplitude, then $H^0A$ builds $M$ in $\msf{D}(A)$. 
\end{prop}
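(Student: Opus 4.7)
The plan is to argue by induction on $\mrm{amp}(M) \in \mathbb{N}$.

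For the base case $\mrm{amp}(M) = 0$, the module $M$ is concentrated in a single cohomological degree $i$, and hence $M \simeq H^iM[-i]$ in $\msf{D}(A)$. Since $A$ is non-positive, the DG-$A$-module structure on $H^iM[-i]$ factors through the canonical map $A \to H^0A$, so $H^iM$ is an $H^0A$-module. Because $H^0A$ is a compact generator of $\msf{D}(H^0A)$, we have $\mrm{Loc}_{\msf{D}(H^0A)}(H^0A) = \msf{D}(H^0A)$, so $H^iM[-i]$ lies in $\mrm{Loc}_{\msf{D}(H^0A)}(H^0A)$. Applying Lemma~\ref{lem:buildingdescends} to the map $A \to H^0A$ transfers this building relation to $\msf{D}(A)$, giving $M \in \mrm{Loc}_{\msf{D}(A)}(H^0A)$.

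For the inductive step, suppose the result is known for all objects of amplitude strictly less than $n := \mrm{amp}(M)$, and let $M$ have amplitude $n \geq 1$. Choose an integer $k$ with $\mrm{inf}(M) \leq k < \mrm{sup}(M)$ and use the smart truncation triangle
\[\mrm{smt}^{\leq k}M \to M \to \mrm{smt}^{>k}M.\]
By the defining property of the smart truncations, $\mrm{amp}(\mrm{smt}^{\leq k}M) \leq k - \mrm{inf}(M) < n$ and $\mrm{amp}(\mrm{smt}^{>k}M) \leq \mrm{sup}(M) - (k+1) < n$, so by the inductive hypothesis both outer terms lie in $\mrm{Loc}_{\msf{D}(A)}(H^0A)$. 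Since a localizing subcategory is closed under extensions, $M$ also lies in $\mrm{Loc}_{\msf{D}(A)}(H^0A)$, completing the induction.

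There is no serious obstacle here: the only subtlety is the base case, which rests on the observation that when $M$ is concentrated in a single degree the $A$-action factors through $H^0A$ (using non-positivity of $A$), so that we can genuinely reduce to $\msf{D}(H^0A)$ and then pull back along the map $A \to H^0A$ using Lemma~\ref{lem:buildingdescends}. The finite amplitude assumption is precisely what makes the induction terminate.
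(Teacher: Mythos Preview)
Your proof is correct and takes essentially the same approach as the paper's: both use smart truncation triangles to reduce $M$ to its individual cohomology modules $H^nM$, observe that these are $H^0A$-modules and hence built from $H^0A$ in $\msf{D}(H^0A)$, and then transfer the building relation to $\msf{D}(A)$ via restriction of scalars. The only cosmetic difference is that the paper peels off one cohomology degree at a time via the triangle $H^nM[-n] \to \mrm{smt}^{>n-1}M \to \mrm{smt}^{>n}M$ and runs a reverse induction on $n$, whereas you split at an arbitrary intermediate level and induct on amplitude; both organize the same finite filtration argument.
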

\begin{proof}
For each $n \in \mbb{Z}$, there is a distinguished triangle \[\mrm{smt}^{\leq n}\mrm{smt}^{>n-1}M \to \mrm{smt}^{>n-1}M \to \mrm{smt}^{>n}M\] in $\msf{D}(A)$. Note that $\mrm{smt}^{\leq n}\mrm{smt}^{>n-1}M \simeq H^nM$ which is a $H^0A$-module. Therefore $H^nM$ is built from $H^0A$ in $\msf{D}(H^0A)$, and hence in $\msf{D}(A)$ by Lemma~\ref{lem:buildingdescends}. One can now proceed by a finite (reverse) induction on $\mrm{inf}M -1 \leq n \leq \mrm{sup}M$, starting with the observation that $\mrm{smt}^{>\mrm{sup}M}M = 0$ and ending with $\mrm{smt}^{>\mrm{inf}M-1}M = M$. 
\end{proof}

\begin{cor}\label{cor:builds}
Let $A$ be a non-positive DG-ring with finite amplitude. Then $H^0A$ builds $A$ in $\msf{D}(A)$. Moreover, if $A$ is noetherian and $H^0A$ is a regular ring of finite Krull dimension then $H^0A$ finitely builds $A$ in $\msf{D}(A)$ and as such is proxy-small.
\end{cor}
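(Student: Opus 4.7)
The first assertion is the case $M = A$ of Proposition~\ref{prop:builds}: since $A$ has finite amplitude by hypothesis, the proposition immediately yields $A \in \mrm{Loc}_{\msf{D}(A)}(H^0A)$, i.e., $H^0A$ builds $A$ in $\msf{D}(A)$.

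For the second assertion, my plan is to rerun the finite induction in the proof of Proposition~\ref{prop:builds} with ``builds'' strengthened to ``finitely builds'' at every stage. The induction step there used that each $H^nA$ is built from $H^0A$ in $\msf{D}(H^0A)$ and hence in $\msf{D}(A)$ via Lemma~\ref{lem:buildingdescends}; what I need to show is that each $H^nA$ is in fact \emph{finitely} built from $H^0A$. Under the assumption that $A$ is noetherian each $H^nA$ is a finitely generated $H^0A$-module, and under the assumption that $H^0A$ is regular of finite Krull dimension every finitely generated $H^0A$-module has finite projective dimension, hence admits a bounded resolution by finitely generated projectives and therefore lies in $\mrm{Thick}_{\msf{D}(H^0A)}(H^0A)$. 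Transporting along the restriction of scalars via Lemma~\ref{lem:buildingdescends} gives $H^nA \in \mrm{Thick}_{\msf{D}(A)}(H^0A)$. Since thick subcategories are closed under extensions and $A$ has finite amplitude, the finitely many smart-truncation triangles in the proof of Proposition~\ref{prop:builds} then assemble $A$ inside $\mrm{Thick}_{\msf{D}(A)}(H^0A)$, as required.

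For the proxy-smallness claim, my plan is to take the witness to be $W = A$ itself. The unit $A$ is compact in $\msf{D}(A)$ by the rigid-compact generation hypothesis; $H^0A$ finitely builds $A$ by the previous paragraph; and $A$ builds $H^0A$ because $A$ is a compact generator of $\msf{D}(A)$, so $\mrm{Loc}_{\msf{D}(A)}(A) = \msf{D}(A) \ni H^0A$. The only non-routine step in the whole argument is the appeal to regularity of $H^0A$ together with finite Krull dimension to upgrade the (a priori unbounded) resolutions implicit in Proposition~\ref{prop:builds} to bounded ones, which is exactly what is needed to pass from ``builds'' to ``finitely builds''.
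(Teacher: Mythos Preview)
Your proof is correct and follows essentially the same route as the paper: apply Proposition~\ref{prop:builds} for the first claim, upgrade the induction to ``finitely builds'' using that each $H^nA$ is a finitely generated module over the regular ring $H^0A$ of finite global dimension, and take $A$ as the witness for proxy-smallness. Your write-up is in fact more explicit than the paper's in justifying why $H^nA \in \mrm{Thick}_{\msf{D}(H^0A)}(H^0A)$.
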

\begin{proof}
The first statement follows from Proposition~\ref{prop:builds}. 
If $A$ is noetherian and $H^0A$ is regular of finite Krull dimension,
then in the proof of Proposition~\ref{prop:builds}, one sees that $H^nA$ is \emph{finitely} built from $H^0A$, and then the same inductive argument shows that $A$ is finitely built from $H^0A$. It follows that $H^0A$ is proxy-small with $A$ as a witness: $H^0A$ finitely builds $A$, $A$ builds $H^0A$ (since $A$ generates $\msf{D}(A)$), and $A$ is compact.
\end{proof}

\begin{rem}\label{rem:notcompact}
Despite the previous corollary,
assuming $A$ has finite amplitude, 
$H^0A$ is not a \emph{compact} generator for $\msf{D}(A)$ unless $A \to H^0A$ is a quasi-isomorphism by~\cite[Theorem I]{Jorgenson10} and~\cite[Theorem 0.7]{Yekutieli}.
\end{rem}

\begin{thm}\label{thm:reduction_is_conservative}
Let $A$ be a non-positive DG-ring with finite amplitude. For a map $f\colon M \to N$ in $\msf{D}(A)$, the following are equivalent:
\begin{itemize}
\item[(i)] $f\colon M \to N$ is an isomorphism in $\msf{D}(A)$;
\item[(ii)] $H^0A \otimes_A^\mrm{L} f\colon H^0A \otimes_A^\mrm{L} M \to H^0A \otimes_A^\mrm{L} N$ is an isomorphism in $\msf{D}(H^0A)$;
\item[(iii)] $\mrm{RHom}_A(H^0A, f)\colon \mrm{RHom}_A(H^0A, M) \to \mrm{RHom}_A(H^0A, N)$ is an isomorphism in $\msf{D}(H^0A)$.
\end{itemize}
In other words, the reduction $H^0A \otimes_A^\mrm{L} -$ and coreduction $\mrm{RHom}_A(H^0A, -)$ are conservative.
\end{thm}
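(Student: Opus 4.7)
The plan is to reduce the three equivalences to conservativity statements for the reduction and coreduction functors, and then deduce conservativity from the building relation between $H^0A$ and $A$ furnished by Corollary~\ref{cor:builds}. The implications (i) $\Rightarrow$ (ii) and (i) $\Rightarrow$ (iii) are immediate, since both $H^0A \otimes_A^\mrm{L} -$ and $\mrm{RHom}_A(H^0A, -)$ are exact. For the converses, passing to the cone of $f$ reduces each one to showing that the relevant functor detects zero objects.

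For (ii) $\Rightarrow$ (i), suppose $H^0A \otimes_A^\mrm{L} M \simeq 0$. Consider the full subcategory $\msf{C} \subseteq \msf{D}(A^{\mrm{op}})$ of right DG-$A$-modules $N$ with $N \otimes_A^\mrm{L} M \simeq 0$. Since $- \otimes_A^\mrm{L} M$ is exact and preserves coproducts, $\msf{C}$ is a localizing subcategory; it contains $H^0A$ by hypothesis. The proof of Proposition~\ref{prop:builds} carries over verbatim to right DG-modules since smart truncation is insensitive to the side of the action, so the right-module analogue of Corollary~\ref{cor:builds} places $A$ in $\mrm{Loc}_{\msf{D}(A^{\mrm{op}})}(H^0A)$. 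Hence $A \in \msf{C}$, which gives $M \simeq A \otimes_A^\mrm{L} M \simeq 0$. The argument for (iii) $\Rightarrow$ (i) is dual: the full subcategory $\msf{C}' = \{N \in \msf{D}(A) : \mrm{Hom}_{\msf{D}(A)}(N[i], M) = 0 \text{ for all } i \in \mbb{Z}\}$ is a localizing subcategory of $\msf{D}(A)$ (since $\mrm{Hom}_{\msf{D}(A)}(-, M)$ is exact and converts coproducts into products), it contains $H^0A$ by hypothesis, and Corollary~\ref{cor:builds} then forces $A \in \msf{C}'$, so $\mrm{Hom}_{\msf{D}(A)}(A[i],M) = 0$ for all $i$, i.e.\ $M \simeq 0$.

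If $A$ is commutative, one may give a uniform and slightly slicker argument by applying the framework of the previous section to $f\colon A \to H^0A$: Hypothesis~\ref{hyp} is satisfied, Corollary~\ref{cor:builds} is precisely the statement that $f_*\1_{\msf{D}(H^0A)}$ builds $\1_{\msf{D}(A)}$, and Proposition~\ref{prop:conservative}(\ref{item:up-cons}) then yields conservativity of both $f^* = H^0A \otimes_A^\mrm{L} -$ and $f^{(1)} = \mrm{RHom}_A(H^0A, -)$ simultaneously. The only non-bookkeeping ingredient is Corollary~\ref{cor:builds} itself; the main (minor) subtlety in the split approach above is the invocation of its right-module analogue, which is automatic from the symmetric form of the smart-truncation induction.
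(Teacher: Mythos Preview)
Your proof is correct and follows essentially the same route as the paper: both arguments use that the class of objects $Z$ for which $Z \otimes_A^\mrm{L} f$ (resp.\ $\mrm{RHom}_A(Z,f)$) is an isomorphism is localizing, contains $H^0A$, and hence contains $A$ by Corollary~\ref{cor:builds}. Your version is in fact slightly more careful than the paper's in the noncommutative case, since you explicitly place $Z$ in $\msf{D}(A^{\mrm{op}})$ for the tensor argument (the paper writes $Z \in \msf{D}(A)$, which only typechecks when $A$ is commutative), and your passage to the cone is an inessential repackaging of the same localizing-subcategory trick.
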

\begin{proof}
Note that the collection of objects $Z \in \msf{D}(A)$ for which the map \[Z \otimes_A^\mrm{L} f\colon Z \otimes_A^\mrm{L} M \to Z \otimes_A^\mrm{L} N\] is an isomorphism in $\msf{D}(A)$ is localizing. Since $H^0A$ and $A$ build each other by Corollary~\ref{cor:builds}, the equivalence of (i) and (ii) follows. Similarly one notes that the collection of objects $Z \in \msf{D}(A)$ for which \[\mrm{RHom}_A(Z,f)\colon \mrm{RHom}_A(Z,M) \to \mrm{RHom}_A(Z,N)\] is an isomorphism in $\msf{D}(A)$ is localizing and the equivalence of (i) and (iii) follows.
\end{proof}

\begin{rem}
In \cite[Proposition 3.1]{Yekutieli} it was shown that for any non-positive DG-ring $A$,
the functor  $H^0A \otimes_A^\mrm{L} - : \msf{D}^-(A) \to \msf{D}^-(H^0A)$ is conservative.
Similarly, in \cite[Proposition 3.4]{shaul2018injective},
it was shown that for any non-positive DG-ring $A$
the functor $\mrm{RHom}_A(H^0A, -): \msf{D}^+(A) \to \msf{D}^+(H^0A)$ is conservative.
The above result generalizes these facts to the unbounded derived category,
under the additional assumption that $A$ has finite amplitude.
\end{rem}

\begin{rem}
Henceforth we will restrict to the case when $A$ is commutative, in which case the previous result is a consequence of Proposition~\ref{prop:conservative}. Nonetheless, it holds without this hypothesis.
\end{rem}

We now want to specialize to the case when $A$ is a non-positive commutative DG-ring. Its derived category $\msf{D}(A)$ is tensor-triangulated and the extension of scalars functor $H^0A \otimes_A^\mrm{L} -:\msf{D}(A)\to\msf{D}(H^0A)$ is a geometric functor; for clarity, in the notation of the previous sections we have $f^* = H^0A \otimes_A^\mrm{L} -$, $f_*$ is restriction of scalars along $A \to H^0A$, and $f^{(1)} = \mrm{RHom}_A(H^0A,-)$. If $H^0A$ is moreover noetherian, then $\msf{D}(H^0A)$ is stratified and costratified by the canonical action of $H^0A$, and we now use the results of the previous section to show that if $A$ has finite amplitude then $\msf{D}(A)$ is stratified and costratified by $H^0A$.

Firstly, we note that (essentially by definition) the abstract torsion and completion functors arising from~\cite{BIK08} coincide with functors described by the first author in~\cite{Shaul19}. Therefore, we have concrete, calculable definitions of $\Gamma_\p$ and $\Lambda_\p$ in terms of Koszul complexes, see~\cite[Proposition 2.4]{Shaul19} for more details. 

\begin{rem}\label{rem:bigvssmall}
We warn the reader that for $M \in \msf{D}(A)$, the notion of support we use in this paper is the \emph{small support} \[\mrm{supp}_A(M) = \{\p \in \mrm{Spec}(H^0A) \mid \Gamma_\p L_\p M \not\simeq 0\}.\] In general this is different to the \emph{big support} \[\mrm{Supp}_A(M) = \{\p \in \mrm{Spec}(H^0A) \mid L_\p M \not\simeq 0\}\] as discussed in~\cite[Definition 1.10]{Shaul20}. However, there is an inclusion $\mrm{supp}_A(M) \subseteq \mrm{Supp}_A(M),$ with equality if $M \in \msf{D}^\mrm{b}_\mrm{f}(A)$.
\end{rem}

\begin{lem}\label{lem:linear}
Let $A$ be a non-positive commutative DG-ring. Then the extension of scalars functor $H^0A \otimes_A^\mrm{L} -\colon \msf{D}(A) \to \msf{D}(H^0A)$ is a canonical $H^0A$-linear functor in the sense of Definition~\ref{defn:compatible}.
\end{lem}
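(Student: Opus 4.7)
The plan is to unwind the definition of the canonical $H^0A$-action on each side and verify the compatibility triangle from Definition~\ref{defn:compatible}. First, I would identify the two canonical actions: $\msf{D}(A)$ receives its canonical $H^0A$-action via the ring map
\[
H^0A \hookrightarrow H^*A \cong \mrm{End}^*_{\msf{D}(A)}(A),
\]
given by inclusion of the degree-zero part (using that the tensor unit of $\msf{D}(A)$ is $A$), while $\msf{D}(H^0A)$ receives its canonical $H^0A$-action via the identification $H^0A = \mrm{End}^*_{\msf{D}(H^0A)}(H^0A)$, noting that $H^0A$ is concentrated in degree zero.

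Next, since both actions are canonical, for every $X \in \msf{D}(A)$ the endomorphism $\phi_X(r)$ is obtained, via the unitor, as $\phi_A(r) \otimes_A^\mrm{L} \mrm{id}_X$, and similarly on the $\msf{D}(H^0A)$ side. Because $f^*$ is strong symmetric monoidal it commutes with tensoring and sends unitors to unitors, so the commutativity of the triangle in Definition~\ref{defn:compatible} for arbitrary $X$ reduces to the single claim that $f^*$ sends $\phi_A(r)\colon A \to A$ to $\phi_{H^0A}(r)\colon H^0A \to H^0A$ in $\msf{D}(H^0A)$.

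To verify this last claim, I would represent $r \in H^0A$ by a cocycle $\tilde r \in A^0$; then $\phi_A(r)$ is represented by multiplication by $\tilde r$ on $A$. Applying $H^0A \otimes_A^\mrm{L} -$ yields multiplication by the image of $\tilde r$ under $A \to H^0A$, which is precisely $r$, and this is exactly the canonical action of $r$ on $H^0A$.

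The main obstacle, if any, is purely notational: it lies in carefully tracking the identifications $\mrm{End}^*_{\msf{D}(A)}(A) \cong H^*A$ and the restriction of $f^*$ on endomorphism rings to the degree-zero part. Once these identifications are made, the argument is formal from strong symmetric monoidality of $f^*$ together with the canonicality of both actions.
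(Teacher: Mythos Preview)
Your proposal is correct. The paper's proof is slightly more direct: rather than reducing to the unit via strong monoidality, it observes for each $M \in \msf{D}(A)$ that the square
\[
\begin{tikzcd}
A \arrow[rr] \arrow[d] & & H^0A \arrow[d] \\
\mrm{RHom}_A(M,M) \arrow[rr, "H^0A \otimes_A^\mrm{L} -"'] & & \mrm{RHom}_{H^0A}(H^0A \otimes_A^\mrm{L} M, H^0A \otimes_A^\mrm{L} M)
\end{tikzcd}
\]
commutes at the level of DG-modules (both routes send $a \in A$ to multiplication by its image in $H^0A$), and then applies $H^0$; since $H^0$ of the top map is the identity on $H^0A$, the triangle of Definition~\ref{defn:compatible} drops out. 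Your reduction to the unit via strong monoidality is a valid alternative that makes the role of canonicality more explicit, at the cost of an extra step; the paper's argument handles all $M$ uniformly without that reduction.
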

\begin{proof}
Note that for any $M \in \msf{D}(A)$ the diagram
\[\begin{tikzcd}
A \arrow[rr] \arrow[d] & & H^0A  \arrow[d]  \\
\mrm{RHom}_A(M,M) \arrow[rr, "H^0A \otimes_A^\mrm{L}-"'] & & \mrm{RHom}_{H^0A}(H^0A \otimes_A^\mrm{L} M,H^0A \otimes_A^\mrm{L} M)
\end{tikzcd}\]
is commutative, so this follows from applying the functor $H^0$ to this diagram.
\end{proof}

\begin{lem}\label{lem:eq-support}
Let $A$ be a non-positive commutative DG-ring with $H^0A$ noetherian.
Then there is an equality $\mrm{supp}_A(A) = \mrm{supp}_A(H^0A)$.
\end{lem}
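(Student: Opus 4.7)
The plan is to show that both $\mrm{supp}_A(A)$ and $\mrm{supp}_A(H^0A)$ are in fact equal to the full spectrum $\mrm{Spec}(H^0A)$, and thus to each other.

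First I would verify that $\mrm{supp}_A(H^0A) = \mrm{Spec}(H^0A)$. Writing $f^* = H^0A \otimes_A^\mrm{L} -\colon \msf{D}(A) \to \msf{D}(H^0A)$, the right adjoint $f_*$ is restriction of scalars, and the object $H^0A \in \msf{D}(A)$ is literally $f_*(H^0A)$ where on the right $H^0A$ carries its canonical module structure over itself. Applying Lemma~\ref{lem:commutewithL} yields $\Gamma_\p L_\p f_* H^0A \simeq f_* \Gamma_\p L_\p H^0A$, where the inner $\Gamma_\p L_\p$ is now computed in $\msf{D}(H^0A)$. By conservativity of $f_*$ (Proposition~\ref{prop:conservative}(\ref{item:down-cons})), this object is nonzero precisely when $\Gamma_\p L_\p H^0A \not\simeq 0$ in $\msf{D}(H^0A)$. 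Since $H^0A$ is the tensor unit (and a compact generator) of $\msf{D}(H^0A)$ with the canonical $H^0A$-action, its small support equals $\mrm{Spec}(H^0A)$ by the standard identification of BIK support with the ordinary prime spectrum over a commutative noetherian ring.

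Next, to handle $\mrm{supp}_A(A)$, observe that $A$ is a compact generator of $\msf{D}(A)$, so trivially $A$ builds $H^0A$. The subcategory $\{M \in \msf{D}(A) \mid \Gamma_\p L_\p M \simeq 0\}$ is localizing (as $\Gamma_\p L_\p$ is exact and coproduct-preserving), so if it contains $A$ it must contain $H^0A$. Contrapositively, $\mrm{supp}_A(H^0A) \subseteq \mrm{supp}_A(A)$. Since $\mrm{supp}_A(A) \subseteq \mrm{Spec}(H^0A)$ tautologically, combining with the previous paragraph gives the chain $\mrm{Spec}(H^0A) = \mrm{supp}_A(H^0A) \subseteq \mrm{supp}_A(A) \subseteq \mrm{Spec}(H^0A)$, which collapses to the desired equality.

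No step is genuinely hard here. The only point worth flagging is that we cannot invoke Corollary~\ref{cor:supp} directly, since that would require $f_*\1_\msf{U}$ to build $\1_\msf{T}$, which in turn needs finite amplitude via Proposition~\ref{prop:builds}; the present lemma carries no amplitude hypothesis. The argument sidesteps this by using only the unconditional conservativity of $f_*$ and its commutation with $\Gamma_\p L_\p$.
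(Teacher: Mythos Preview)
Your proof is correct and essentially the same as the paper's: both first establish $\mrm{supp}_A(H^0A) = \mrm{supp}_{H^0A}(H^0A) = \mrm{Spec}(H^0A)$ via Lemma~\ref{lem:commutewithL} and conservativity of $f_*$, and then obtain the inclusion $\mrm{supp}_A(H^0A) \subseteq \mrm{supp}_A(A)$. The paper justifies this last step by noting that $\Gamma_\p$ and $L_\p$ are smashing (so $\Gamma_\p L_\p H^0A \simeq \Gamma_\p L_\p A \otimes_A^\mrm{L} H^0A$), which is exactly the content of your observation that the kernel of $\Gamma_\p L_\p$ is localizing.
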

\begin{proof}
Given $\p \in \mrm{Spec}(H^0A)$,
since the functors $\Gamma_\p$ and $L_\p$ commute with restriction of scalars by Lemma~\ref{lem:commutewithL},
we have that $\mrm{supp}_A(H^0A) = \mrm{supp}_{H^0A}(H^0A) = \mrm{Spec}(H^0A)$,
which shows that $\mrm{supp}_A(A) \subseteq \mrm{supp}_A(H^0A)$.
The converse inclusion follows from the fact that $\Gamma_\p$ and $L_\p$ are smashing.
\end{proof}

\begin{thm}\label{thm:main}
Let $A$ be a non-positive commutative DG-ring with $H^0A$ noetherian.
The derived category $\msf{D}(A)$ is stratified and costratified by the canonical action of $H^0A$
if and only if $H^0A$ builds $A$ in $\msf{D}(A)$.
In particular, this is the case if $A$ has finite amplitude.
\end{thm}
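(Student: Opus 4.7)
The plan is to apply Corollaries~\ref{cor:equiv-stra} and~\ref{cor:equiv-costra} directly to the extension of scalars functor $f^* = H^0A \otimes_A^\mrm{L} -\colon \msf{D}(A) \to \msf{D}(H^0A)$. First I would verify the hypotheses. Both $\msf{D}(A)$ and $\msf{D}(H^0A)$ are rigidly-compactly generated by their tensor units $A$ and $H^0A$ respectively, and $f^*$ is exact, coproduct-preserving and strong symmetric monoidal, so Hypothesis~\ref{hyp} is satisfied. Lemma~\ref{lem:linear} shows that $f^*$ is a canonical $H^0A$-linear functor. Since $H^0A$ is commutative noetherian, the category $\msf{D}(H^0A)$ is stratified and costratified by the canonical action of $H^0A$ by the classical results of Neeman~\cite{Neeman2011} and Benson-Iyengar-Krause~\cite{BIK11, BIK12}.

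Next I would check the support condition $\mrm{supp}_{\msf{D}(A)}(A) \subseteq \mrm{supp}_{\msf{D}(H^0A)}(H^0A)$ required by the corollaries, which is automatic since $\mrm{supp}_{\msf{D}(H^0A)}(H^0A) = \mrm{Spec}(H^0A)$. In the notation of Corollaries~\ref{cor:equiv-stra} and~\ref{cor:equiv-costra}, the object $f_*\1_\msf{U}$ is simply $H^0A$ regarded as an object of $\msf{D}(A)$ via restriction of scalars along the canonical map $A \to H^0A$, and $\1_\msf{T} = A$. Therefore the abstract condition that $f_*\1_\msf{U}$ builds $\1_\msf{T}$ translates directly into the concrete condition that $H^0A$ builds $A$ in $\msf{D}(A)$. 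Invoking the two corollaries simultaneously then yields the claimed biconditional: $\msf{D}(A)$ is stratified and costratified by $H^0A$ if and only if $H^0A$ builds $A$ in $\msf{D}(A)$.

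For the final clause, when $A$ has finite amplitude, Proposition~\ref{prop:builds} (applied to $M = A$) gives that $H^0A$ builds $A$ in $\msf{D}(A)$. Combined with the biconditional just established, this shows that $\msf{D}(A)$ is stratified and costratified by $H^0A$ under the finite amplitude hypothesis.

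There is no serious obstacle here: the proof is an assembly of the abstract descent results of Section~3 (Corollaries~\ref{cor:equiv-stra} and~\ref{cor:equiv-costra}) with the concrete building result of Proposition~\ref{prop:builds}. The substantive work has already been done in establishing those general results and in verifying (via Lemma~\ref{lem:linear}) that the extension of scalars functor fits into the framework of Hypothesis~\ref{hyp} as a canonical $H^0A$-linear functor.
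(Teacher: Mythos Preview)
Your proof is correct and follows essentially the same route as the paper: apply Corollaries~\ref{cor:equiv-stra} and~\ref{cor:equiv-costra} to $f^* = H^0A \otimes_A^\mrm{L} -$, using Lemma~\ref{lem:linear} for $R$-linearity, the classical (co)stratification of $\msf{D}(H^0A)$, and Proposition~\ref{prop:builds}/Corollary~\ref{cor:builds} for the finite amplitude case. Your verification of the support inclusion (noting $\mrm{supp}_{\msf{D}(H^0A)}(H^0A) = \mrm{Spec}(H^0A)$ so the inclusion is automatic) is slightly more direct than the paper's appeal to Lemma~\ref{lem:eq-support}, but the argument is otherwise identical.
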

\begin{proof}
The derived category $\msf{D}(H^0A)$ is stratified and costratified by the canonical action of $H^0A$ by 
\cite[Theorem 2.8]{Neeman1992} and \cite[Corollary 2.8]{Neeman2011}.
By combining Corollaries~\ref{cor:equiv-stra} and~\ref{cor:equiv-costra}, Lemmas~\ref{lem:linear} and~\ref{lem:eq-support} and Corollary~\ref{cor:builds}, the result follows.
\end{proof}

\begin{cor}
Let $A$ be a non-positive commutative DG-ring with finite amplitude and $H^0A$ noetherian.
Then the derived category $\msf{D}(A)$ is stratified and costratified by the canonical action of $H^*A$.
\end{cor}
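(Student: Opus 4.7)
The plan is to deduce this corollary from Theorem~\ref{thm:main} together with Proposition~\ref{prop:strat-rzero}. By Theorem~\ref{thm:main}, under the stated hypotheses $\msf{D}(A)$ is stratified and costratified by the canonical action of $H^0A$. Since the $H^0A$-action on $\msf{D}(A)$ factors through the restriction along the inclusion $H^0A \hookrightarrow H^*A$, Proposition~\ref{prop:strat-rzero} applied with $R = H^*A$ and $R^0 = H^0A$ will upgrade this to (co)stratification by $H^*A$, provided its hypothesis on $H^*A$ is verified.

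The only nontrivial thing to check is therefore that every homogeneous element of nonzero degree in $H^*A$ is nilpotent. Since $A$ is non-positive, $H^nA = 0$ for $n > 0$, so the only homogeneous elements of nonzero degree live in $H^nA$ for $n < 0$. Since $A$ has finite amplitude, there exists $d \geq 0$ such that $H^mA = 0$ for $m < -d$. Given $a \in H^nA$ with $n < 0$, the element $a^k$ lies in $H^{nk}A$, which vanishes as soon as $k > d/|n|$. Hence $a$ is nilpotent, as required.

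With the nilpotence hypothesis confirmed, Proposition~\ref{prop:strat-rzero} directly yields that $\msf{D}(A)$ is stratified (respectively, costratified) by $H^*A$ if and only if it is stratified (respectively, costratified) by $H^0A$, and we already have the latter. I do not anticipate any real obstacle here: the proof is essentially a short chain of citations, with the only ingredient to supply being the elementary nilpotence observation enabled by the finite amplitude hypothesis.
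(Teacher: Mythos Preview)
Your proposal is correct and follows exactly the same route as the paper: observe that finite amplitude forces every homogeneous element of $H^*A$ of nonzero degree to be nilpotent, then invoke Proposition~\ref{prop:strat-rzero} together with Theorem~\ref{thm:main}. Your write-up even spells out the nilpotence argument in slightly more detail than the paper's one-line remark.
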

\begin{proof}
Note that since $A$ has finite amplitude,
any homogeneous element of $H^*A$ which does not belong to $H^0A$ is nilpotent.
Hence, this follows from Proposition~\ref{prop:strat-rzero} and Theorem~\ref{thm:main}.
\end{proof}

\begin{rem}\label{rem:formal}
The above result is a far reaching generalization of \cite[Theorem 8.1]{BIK11} 
and \cite[Theorem 10.3]{BIK12} which proved that if $A$ is a formal commutative noetherian DG-ring
then $\msf{D}(A)$ is stratified and costratified by the canonical action of $H^*A$.
We also note the related work of~\cite[Theorem 1.6]{DAS16} which shows that for a commutative DG-ring $A$ with $H^*A$ noetherian,
the derived category $\msf{D}(A)$ is stratified by $H^*A$ if each localized prime $\p (H^*A)_\p$ is generated by a finite regular sequence. 
\end{rem}

\begin{cor}
Let $A$ be a non-positive commutative DG-ring with $H^0A$ noetherian.
Suppose that $H^*A$ is generated over $H^0A$ by elements of odd degree.
Then the derived category $\msf{D}(A)$ is stratified and costratified by the canonical action of $H^*A$
if and only if $H^0A$ builds $A$ in $\msf{D}(A)$.
\end{cor}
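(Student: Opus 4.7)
The plan is to deduce this corollary by combining Theorem~\ref{thm:main} with Proposition~\ref{prop:strat-rzero}. Indeed, Proposition~\ref{prop:strat-rzero} shows that (co)stratification of $\msf{D}(A)$ by the canonical action of $H^*A$ is equivalent to (co)stratification by the canonical action of $H^0A$, provided that every homogeneous element of $H^*A$ of nonzero degree is nilpotent. Once this hypothesis is verified, Theorem~\ref{thm:main} immediately yields the equivalence with the condition that $H^0A$ builds $A$ in $\msf{D}(A)$.

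The essential step is therefore to check the nilpotence hypothesis. Let $a \in H^*A$ be homogeneous of nonzero degree. By assumption, $a$ can be expressed as a polynomial with coefficients in $H^0A$ in finitely many generators $x_1, \ldots, x_n$, each of odd degree. Graded-commutativity forces $x_i^2 = 0$ for each $i$, so $a$ admits a multilinear presentation
\[
a \;=\; \sum_{\emptyset \ne S \subseteq \{1,\dots,n\}} c_S \prod_{i \in S} x_i, \qquad c_S \in H^0A,
\]
where the empty subset is excluded because $a$ has nonzero degree.

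The next move is to show that $a^{n+1} = 0$. Expanding $a^{n+1}$ produces a sum of terms of the form $(\textrm{scalar}) \cdot m_{S_1} m_{S_2} \cdots m_{S_{n+1}}$ with each $S_k$ a nonempty subset of $\{1, \ldots, n\}$ and $m_{S_k} = \prod_{i \in S_k} x_i$. Since the $n+1$ nonempty subsets $S_1,\dots,S_{n+1}$ sit inside an $n$-element set, the pigeonhole principle forces some index $i$ to appear in two distinct factors $m_{S_k}$. Graded-commutativity lets one reorder the product up to a sign so that the two occurrences of $x_i$ become adjacent, and then $x_i^2 = 0$ annihilates the term. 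Hence every such $a$ is nilpotent.

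With the nilpotence hypothesis established, Proposition~\ref{prop:strat-rzero} converts (co)stratification by $H^*A$ into (co)stratification by $H^0A$, and Theorem~\ref{thm:main} finishes the proof by identifying the latter with the building condition $\1_{\msf{D}(A)} \in \mrm{Loc}_{\msf{D}(A)}(H^0A)$. I do not anticipate a genuine obstacle; the only mild bookkeeping lies in the sign tracking of the graded-commutative rearrangement, which is immaterial to the vanishing conclusion.
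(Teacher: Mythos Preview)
Your proposal is correct and follows the same route as the paper: verify that every homogeneous element of nonzero degree in $H^*A$ is nilpotent (using that the odd-degree generators square to zero by graded-commutativity), then invoke Proposition~\ref{prop:strat-rzero} and Theorem~\ref{thm:main}. The paper's proof is terser, simply noting that odd-degree elements have square zero and leaving the passage to arbitrary nonzero-degree elements implicit, whereas you spell out the pigeonhole argument explicitly.
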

\begin{proof}
The commutativity assumption implies that any homogeneous element in $H^*A$ of odd degree has square zero,
so it is nilpotent.
From this the result follows using Proposition~\ref{prop:strat-rzero} and Theorem~\ref{thm:main}.
\end{proof}

The work of Benson-Iyengar-Krause shows that (co)stratification implies a lot about the structure of $\msf{D}(A)$. We record these consequences in the following three theorems. First, we deal with support.

\begin{thm}\label{thm:consequencessupp}
Let $A$ be a non-positive commutative DG-ring with finite amplitude and $H^0A$ noetherian.
\begin{enumerate}
\item\label{item:loc} There is a bijection \[\{\mrm{localizing~subcategories~of~} \msf{D}(A)\} \xlongleftrightarrow{\cong} \{\mrm{subsets~of~Spec}(H^0A)\} \]
given by $\msf{L} \mapsto \bigcup_{X\in \msf{L}}\mrm{supp}_A(X)$ with inverse $V \mapsto \{M \in \msf{D}(A) \mid \mrm{supp}_A(M) \subseteq V\}$.
\item\label{item:build} Let $M, N \in \msf{D}(A)$. Then $M$ builds $N$ if and only if $\mrm{supp}_A(M) \supseteq \mrm{supp}_A(N)$.
\item\label{item:supp} Let $M, N \in \msf{D}(A)$. Then $\mrm{supp}_A(M \otimes_A^\mrm{L} N) = \mrm{supp}_A(M) \cap \mrm{supp}_A(N).$
\item\label{item:telescope} If $A$ is noetherian, then the telescope conjecture holds in $\msf{D}(A)$; that is, for any localizing subcategory $\msf{L}$ of $\msf{D}(A)$, the following are equivalent: 
\begin{itemize}
\item[(a)] $\msf{L}$ is generated by compact objects of $\msf{D}(A)$;
\item[(b)] the associated localization functor $L$ is smashing;
\item[(c)] the support of $\msf{L}$ is specialization closed.
\end{itemize}
\item\label{item:hzloc} There is a bijection \[\{\mrm{localizing~subcategories~of~} \msf{D}(A)\} \xlongleftrightarrow{\cong} \{\mrm{localizing~subcategories~of~}\msf{D}(H^0A)\} \] given by $\msf{L} \mapsto \mrm{Loc}_{\msf{D}(H^0A)}(H^0A \otimes_A^\mrm{L} X \mid X \in \msf{L})$.
\end{enumerate}
\end{thm}
\begin{proof}
Since $\msf{D}(A)$ is stratified by the canonical action of $H^0A$ by Theorem~\ref{thm:main}, (1) follows from~\cite[Theorem 4.2]{BIK11}, and part (2) is an immediate consequence of (1) and the definition of building. Part (3) follows from~\cite[Theorem 7.3]{BIK11}, and (4) from~\cite[Theorem 6.3]{BIK11} (see also~\cite[Theorem 11.12]{BIK11b}). As the tensor unit of $\msf{D}(A)$ generates, localizing tensor ideals and localizing subcategories in $\msf{D}(A)$ are the same. Therefore, part (5) is a consequence of Corollary~\ref{cor:locofTandU}. 
\end{proof}

Next, we discuss consequences for compact objects in $\msf{D}(A)$.
 
\begin{thm}\label{thm:consequencescompacts}
Let $A$ be a non-positive commutative DG-ring with finite amplitude and $H^0A$ noetherian.
\begin{enumerate}
\item\label{item:thick} If $A$ is noetherian, there is a bijection \[\{\mrm{thick~subcategories~of~} \msf{D}(A)^\omega\} \xlongleftrightarrow{\cong} \{\mrm{specialization~closed~subsets~of~Spec}(H^0A)\} \] given by $\msf{S} \mapsto \bigcup_{X\in \msf{T}}\mrm{supp}_A(X)$ with inverse $V \mapsto \{M \in \msf{D}(A)^\omega \mid \mrm{supp}_A(M) \subseteq V\}$.
\item\label{item:fbuild} Let $M, N \in \msf{D}(A)$ be compact objects. Then $M$ finitely builds $N$ if and only if $\mrm{supp}_A(M) \supseteq \mrm{supp}_A(N)$.
\item\label{item:hzthick} If $A$ is noetherian, there is a bijection \[\{\mrm{thick~subcategories~of~} \msf{D}(A)^\omega\} \xlongleftrightarrow{\cong} \{\mrm{thick~subcategories~of~}\msf{D}(H^0A)^\omega\} \] given by $\msf{S} \mapsto \mrm{Thick}_{\msf{D}(H^0A)}(H^0A \otimes_A^\mrm{L} X \mid X \in \msf{S})$.
\end{enumerate}
\end{thm}
\begin{proof}
Since $\msf{D}(A)$ is stratified by the canonical action of $H^0A$ by Theorem~\ref{thm:main}, (1) follows from~\cite[Theorem 6.1]{BIK11}. For part (2), since $M$ is compact, by Thomason's localization theorem~\cite[Theorem 2.1(2.1.3)]{Neeman96} we have $\mrm{Thick}(M) = \mrm{Loc}(M) \cap \msf{D}(A)^\omega$. Therefore, as $N$ is also compact, $M$ finitely builds $N$ if and only if $M$ builds $N$, so the claim follows from Theorem~\ref{thm:consequencessupp}(\ref{item:build}). Finally, part (3) is a consequence of Corollary~\ref{cor:locofTandU}, as thick tensor ideals and thick subcategories are the same in $\msf{D}(A)$ since the tensor unit of $\msf{D}(A)$ generates.
\end{proof}

Here are the corresponding results for cosupport.

\begin{thm}\label{thm:consequencescosupp}
Let $A$ be a non-positive commutative DG-ring with finite amplitude and $H^0A$ noetherian.
\begin{enumerate}
\item\label{item:coloc} There is a bijection \[\{\mrm{colocalizing~subcategories~of~} \msf{D}(A)\} \xlongleftrightarrow{\cong} \{\mrm{subsets~of~Spec}(H^0A)\} \]
given by $\msf{C} \mapsto \bigcup_{X\in \msf{C}}\mrm{cosupp}_A(X)$ with inverse $V \mapsto \{M \in \msf{D}(A) \mid \mrm{cosupp}_A(M) \subseteq V\}$.
\item\label{item:locandcoloc} There is a bijection \[\{\mrm{localizing~subcategories~of~} \msf{D}(A)\} \xlongleftrightarrow{\cong} \{\mrm{colocalizing~subcategories~of~}\msf{D}(A)\} \] given by $\msf{L} \mapsto \msf{L}^{\perp} = \{M \in \msf{D}(A) \mid \mrm{RHom}_A(M,N) \simeq 0 \text{ for all $N \in \msf{L}$}\}$ with inverse $\msf{C} \mapsto {}^\perp\msf{C} = \{N \in \msf{D}(A) \mid \mrm{RHom}_A(M,N) \simeq 0 \text{ for all $M \in \msf{C}$}\}$.
\item\label{item:cobuild} Let $M, N \in \msf{D}(A)$. Then $M$ cobuilds $N$ if and only if $\mrm{cosupp}_A(M) \supseteq \mrm{cosupp}_A(N)$.
\item\label{item:cosupp} Let $M, N \in \msf{D}(A)$. Then $\mrm{cosupp}_A(\mrm{RHom}_A(M,N)) = \mrm{supp}_A(M) \cap \mrm{cosupp}_A(N)$.
\item\label{item:hzcoloc} There is a bijection \[\{\mrm{colocalizing~subcategories~of~} \msf{D}(A)\} \xlongleftrightarrow{\cong} \{\mrm{colocalizing~subcategories~of~}\msf{D}(H^0A)\} \] given by $\msf{C} \mapsto \mrm{Coloc}_{\msf{D}(H^0A)}(\mrm{RHom}_A(H^0A, X) \mid X \in \msf{C})$.
\end{enumerate}
\end{thm}
\begin{proof}
By Theorem~\ref{thm:main},  $\msf{D}(A)$ is costratified by the canonical action of $H^0A$, so (1) follows from~\cite[Proposition 5.6]{BIK12} (also see~\cite[Remark 5.7]{BIK12}), and (2) from~\cite[Corollary 9.9]{BIK12}. Part (3) follows immediately from (1) and the definition of cobuilding. Part (4) follows from~\cite[Theorem 9.5]{BIK12} since $\msf{D}(A)$ is also stratified by $H^0A$ by Theorem~\ref{thm:main}. Since the tensor unit in $\msf{D}(A)$ generates, colocalizing sucategories and hom-closed colocalizing subcategories are the same in $\msf{D}(A)$. Therefore, part (5) is a consequence of Corollary~\ref{cor:colocofTandU}.
\end{proof}

The next result may be viewed as a generalization of Remark~\ref{rem:notcompact}.
Note that we \textbf{do not} assume in it finite amplitude.

\begin{cor}\label{cor:str-implies-build}
Let $A$ be a non-positive commutative noetherian DG-ring,
and suppose that $\msf{D}(A)$ is stratified by the canonical action of $H^0A$.
Then the DG-module $H^0A$ is compact in $\msf{D}(A)$ if and only if the map of DG-rings $A\to H^0A$ is a quasi-isomorphism.
\end{cor}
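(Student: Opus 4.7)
The ``if'' direction is immediate: a quasi-isomorphism $A\to H^0A$ gives $H^0A\simeq A$ in $\msf{D}(A)$, and $A$ is a compact generator of $\msf{D}(A)$, so $H^0A$ is compact.

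For the converse, the plan is to reduce to the finite amplitude case handled by the Jorgensen--Yekutieli result cited in Remark~\ref{rem:notcompact}. Assume $H^0A$ is compact. My first goal is to show that $H^0A$ is in fact a compact \emph{generator} of $\msf{D}(A)$. For this, I would verify that $\mrm{supp}_A(H^0A) = \mrm{supp}_A(A) = \mrm{Spec}(H^0A)$. This is exactly the statement of Lemma~\ref{lem:eq-support}, and importantly its proof uses only that $\Gamma_\p$ and $L_\p$ are smashing and commute with restriction of scalars, so it does not require finite amplitude of $A$. Stratification then gives, via the classification of localizing subcategories by subsets of the support in~\cite[Theorem 4.2]{BIK11}, the equality $\mrm{Loc}_A(H^0A) = \mrm{Loc}_A(A) = \msf{D}(A)$.

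Now $H^0A$ is a compact generator of $\msf{D}(A)$. By Neeman's theorem (the compact objects of a compactly generated triangulated category coincide with the thick closure of any set of compact generators), we have $A \in \msf{D}(A)^\omega = \mrm{Thick}_A(H^0A)$. Since $H^0A$ has amplitude zero, and the subcategory of objects of finite amplitude is closed under shifts, cones and retracts, it follows that $A$ itself has finite amplitude.

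With $A$ now of finite amplitude and $H^0A$ a compact generator of $\msf{D}(A)$, the hypotheses of Remark~\ref{rem:notcompact} are met and we conclude that $A \to H^0A$ is a quasi-isomorphism. The main subtle point worth checking carefully is the very first step: one must confirm that Lemma~\ref{lem:eq-support} genuinely holds without the finite amplitude assumption, so that stratification alone upgrades compactness of $H^0A$ into compact generation. Once this is secured, the remainder of the argument is a routine combination of Neeman's theorem with the already existing result in the finite amplitude setting.
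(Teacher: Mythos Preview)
Your proposal is correct and follows essentially the same route as the paper's proof: both invoke Lemma~\ref{lem:eq-support} (which, as you correctly note, does not require finite amplitude) to get $\mrm{supp}_A(H^0A)=\mrm{supp}_A(A)$, use stratification to deduce that $H^0A$ finitely builds $A$, conclude that $A$ has finite amplitude, and finish with the Jorgensen--Yekutieli result. The only cosmetic difference is in the middle step: the paper appeals directly to the support-theoretic classification of thick subcategories of compacts (as in Theorem~\ref{thm:consequences}(\ref{item:fbuild}), via Thomason's localization theorem), whereas you first pass through the localizing classification to get that $H^0A$ is a compact generator and then invoke Neeman's identification $\msf{D}(A)^\omega=\mrm{Thick}_A(H^0A)$; these are two phrasings of the same argument.
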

\begin{proof}
By Lemma~\ref{lem:eq-support}
there is an equality $\mrm{supp}_A(A) = \mrm{supp}_A(H^0A)$.
Hence, as in~(\ref{item:thick}) and~(\ref{item:fbuild}) of Theorem~\ref{thm:consequencescompacts} above,
it follows from~\cite[Theorem 4.2]{BIK11} that if $H^0A$ is compact, 
then $H^0A$ finitely builds $A$.
This implies that $A$ has finite amplitude,
so by ~\cite[Theorem I]{Jorgenson10} or~\cite[Theorem 0.7]{Yekutieli} it follows that $A \to H^0A$ is a quasi-isomorphism.
\end{proof}

\begin{ex}\label{ex:inf-amp}
Let $\k$ be a field, and let $A = \k[t]$,
thought of as a graded ring with $t$ in (cohomological) degree $-2$.
This is a commutative noetherian DG-ring with zero differential and infinite amplitude.
As explained in~\cite[Example 7.26]{Yekutieli} and in~\cite[Example 1.7]{BILP},
it holds that $H^0A$ is a compact object in $\msf{D}(A)$,
so it follows from Corollary~\ref{cor:str-implies-build} that $\msf{D}(A)$ is not stratified by the canonical action of $H^0A$.
This in turn implies by~\cite[Theorem 9.7]{BIK12} that $\msf{D}(A)$ is also not costratified by the canonical action of $H^0A$. 
It follows from Theorem~\ref{thm:main} that $H^0A$ does not build $A$ in $\msf{D}(A)$. Infact, by~\cite[Theorem 5.2]{BIK11b} and~\cite[Theorem 10.3]{BIK12}, we know that $\msf{D}(A)$ is stratified and costratified by the action of $H^*A$.
\end{ex}

\subsection{Reduction determines buildings}
We now apply the consequences of the (co)stratification results described in Theorems~\ref{thm:consequencessupp},~\ref{thm:consequencescompacts} and~\ref{thm:consequencescosupp} to show that the reduction and coreduction functors can be used to determine a lot of structure in $\msf{D}(A)$. 

\begin{cor}\label{cor:suppdeterminesbuilding}
Let $A$ be a non-positive commutative DG-ring with finite amplitude such that $H^0A$ is noetherian, and let $M, N \in \msf{D}(A)$. 
\begin{enumerate}
\item $M$ builds $N$ in $\msf{D}(A)$ if and only if $H^0A \otimes_A^\mrm{L} M$ builds $H^0A \otimes_A^\mrm{L} N$ in $\msf{D}(H^0A)$.
\item $M$ cobuilds $N$ in $\msf{D}(A)$ if and only if $\mrm{RHom}_A(H^0A,M)$ cobuilds $\mrm{RHom}_A(H^0A,N)$ in $\msf{D}(H^0A)$.
\end{enumerate}
\end{cor}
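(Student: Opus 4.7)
The plan is to apply Corollary~\ref{cor:build} and Corollary~\ref{cor:cobuild} directly to the functor $f^* = H^0A \otimes_A^\mrm{L} - \colon \msf{D}(A) \to \msf{D}(H^0A)$, whose right adjoint $f_*$ is restriction of scalars along $A \to H^0A$ and whose iterated right adjoint is $f^{(1)} = \mrm{RHom}_A(H^0A, -)$. The work to be done is just checking that the hypotheses of those corollaries are met in our setting.

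First, I would note that $f^*$ is exact, strong symmetric monoidal, and coproduct-preserving, and that both $\msf{D}(A)$ and $\msf{D}(H^0A)$ are rigidly-compactly generated by their tensor units $A$ and $H^0A$ respectively, so Hypothesis~\ref{hyp} is satisfied. By Lemma~\ref{lem:linear}, the functor $f^*$ is a canonical $H^0A$-linear functor in the sense of Definition~\ref{defn:compatible}. Since $A$ has finite amplitude, Corollary~\ref{cor:builds} gives that $H^0A = f_*\1_{\msf{D}(H^0A)}$ builds $A = \1_{\msf{D}(A)}$ in $\msf{D}(A)$. Finally, $\msf{D}(H^0A)$ is stratified and costratified by the canonical action of $H^0A$ by the classical results of Neeman (\cite[Theorem 2.8]{Neeman1992} and \cite[Corollary 2.8]{Neeman2011}) used in the proof of Theorem~\ref{thm:main}.

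With these hypotheses verified, part (1) of the corollary is an immediate application of Corollary~\ref{cor:build}, which says that $M$ builds $N$ in $\msf{D}(A)$ if and only if $f^*M = H^0A \otimes_A^\mrm{L} M$ builds $f^*N = H^0A \otimes_A^\mrm{L} N$ in $\msf{D}(H^0A)$. Likewise, part (2) is an immediate application of Corollary~\ref{cor:cobuild}, which says that $M$ cobuilds $N$ in $\msf{D}(A)$ if and only if $f^{(1)}M = \mrm{RHom}_A(H^0A, M)$ cobuilds $f^{(1)}N = \mrm{RHom}_A(H^0A, N)$ in $\msf{D}(H^0A)$. There is no substantive obstacle here: all the work has already been done in establishing the descent machinery and in checking (via Corollary~\ref{cor:builds}) that the building hypothesis $f_*\1_\msf{U} \in \mrm{Loc}_\msf{T}(\1_\msf{T})$ holds whenever $A$ has finite amplitude.
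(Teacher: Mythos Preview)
Your proposal is correct and follows exactly the paper's approach: the paper's proof simply cites Corollary~\ref{cor:build} for part (1) and Corollary~\ref{cor:cobuild} for part (2), and you have spelled out in detail why the hypotheses of those corollaries are met (via Lemma~\ref{lem:linear}, Corollary~\ref{cor:builds}, and Neeman's results). One small slip: in your final sentence the inclusion is written backwards---the hypothesis is $\1_\msf{T} \in \mrm{Loc}_\msf{T}(f_*\1_\msf{U})$, not $f_*\1_\msf{U} \in \mrm{Loc}_\msf{T}(\1_\msf{T})$ (the latter is automatic since $\1_\msf{T}$ generates $\msf{T}$)---but you had it right earlier in the paragraph.
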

\begin{proof}
(1) follows from Corollary~\ref{cor:build} and (2) follows from Corollary~\ref{cor:cobuild}.
\end{proof}

The next example shows that this result is false if $A$ has infinite amplitude (even if it is formal).
\begin{ex}\label{ex:notbuild}
This example is a continuation of Example~\ref{ex:inf-amp}; that is, we take $A = \k[t]$ where $t$ is in degree $-2$.
Even when $A$ has infinite amplitude, the functor $H^0A\otimes_A^\mrm{L} -:\msf{D}^{-}(A) \to \msf{D}^-(H^0A)$ is conservative by~\cite[Proposition 3.1]{Yekutieli}, so it follows as in Corollary~\ref{cor:supp}
that $\mrm{supp}_{H^0A}(H^0A\otimes_A^\mrm{L} H^0A) = \mrm{supp}_{A}(H^0A) = \mrm{supp}_{H^0A}(H^0A)$.
Since $\msf{D}(H^0A)$ is stratified by $H^0A$, 
this implies that $H^0A\otimes_A^\mrm{L} H^0A$ builds $H^0A = H^0A\otimes_A^\mrm{L} A$ in $\msf{D}(H^0A)$,
but we have seen in Example~\ref{ex:inf-amp} that $H^0A$ does not build $A$ in $\msf{D}(A)$.
\end{ex}

\begin{prop}\label{prop:compact}
Let $A$ be a non-positive commutative DG-ring $A$ with finite amplitude, and let $M \in \msf{D}(A)$. Then $M$ is compact in $\msf{D}(A)$ if and only if $H^0A \otimes_A^\mrm{L} M$ is compact in $\msf{D}(H^0A)$.
\end{prop}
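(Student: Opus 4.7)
The forward direction is a standard functorial argument: since the functor $H^0A \otimes_A^\mrm{L} -\colon \msf{D}(A) \to \msf{D}(H^0A)$ is exact and coproduct-preserving, and sends the compact generator $A$ of $\msf{D}(A)$ to the compact generator $H^0A$ of $\msf{D}(H^0A)$, it restricts to a functor
\[ \msf{D}(A)^\omega = \mrm{Thick}_{\msf{D}(A)}(A) \to \mrm{Thick}_{\msf{D}(H^0A)}(H^0A) = \msf{D}(H^0A)^\omega. \]
This proves one implication.

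For the converse, write $\bar M := H^0A \otimes_A^\mrm{L} M$ and assume $\bar M$ is compact in $\msf{D}(H^0A)$, so $\bar M \in \msf{D}^b_\mrm{f}(H^0A)$ and has finite projective dimension over $H^0A$. The plan is to construct a finite semi-free DG-$A$-module $P$ together with a morphism $\phi\colon P \to M$ in $\msf{D}(A)$ whose reduction $H^0A \otimes_A^\mrm{L} \phi$ is a quasi-isomorphism; the conservativity of $H^0A \otimes_A^\mrm{L} -$ from Theorem~\ref{thm:reduction_is_conservative} (which applies because $A$ has finite amplitude) then forces $\phi$ to itself be a quasi-isomorphism, exhibiting $M$ as compact.

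The first step is to show that $M \in \msf{D}^b_\mrm{f}(A)$. This uses the finite amplitude of $A$ together with the standard comparison results between the cohomology of $M$ and of $\bar M$ available in this setting: in particular $\mrm{sup}(H^0A \otimes_A^\mrm{L} M) = \mrm{sup}(M)$ for $M \in \msf{D}^-(A)$ (as in \cite[Proposition 3.1]{Yekutieli}), an analogous lower bound on $\mrm{inf}(M)$ in terms of $\mrm{inf}(\bar M)$ and the amplitude of $A$, and a comparison of finite generation of $H^*M$ and $H^*\bar M$. The second step is to lift a finite free resolution $\bar F \xrightarrow{\sim} \bar M$ over $H^0A$ to a finite semi-free DG-$A$-module $P$ and simultaneously construct $\phi\colon P \to M$ by attaching cells one at a time: the required lifts exist because the natural surjection $H^s M \twoheadrightarrow H^s \bar M$ (available after smart truncation) allows lifting generators from $\bar M$ to $M$, and then higher relations are handled inductively.

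The main obstacle will be this inductive construction of $P$ and $\phi$: one must verify that at each stage the relevant obstruction vanishes and that the procedure terminates after finitely many cells. The finite amplitude of $A$ is critical both for controlling the sup and inf of intermediate objects along the induction and for ensuring termination, while the perfectness of $\bar M$ provides the finite bound on the number of cells needed.
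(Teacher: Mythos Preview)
Your forward direction is fine and agrees with the paper's. For the converse the paper takes a very different and much shorter route through flat dimension: since $\bar M:=H^0A\otimes_A^\mrm{L} M$ is compact it has finite flat dimension over $H^0A$, hence $M$ has finite flat dimension over $A$ by \cite[Theorem~4.1]{Shaul18}; then $M\simeq A\otimes_A^\mrm{L} M$ has finite amplitude because $A$ does, and \cite[Theorem~5.11]{Yekutieli} gives that $M$ is compact. No resolution is lifted by hand.

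Your outline has a genuine gap at the first step. You cite $\sup(\bar M)=\sup(M)$ from \cite[Proposition~3.1]{Yekutieli}, but that identity is only stated for $M\in\msf{D}^-(A)$, which is precisely part of what you are trying to establish; the ``analogous lower bound on $\inf(M)$'' you allude to would likewise need $M\in\msf{D}^+(A)$, so neither bound can be used to bootstrap the other. Some independent argument is required to get boundedness of $M$ before any of these comparison formulas apply, and once you supply one (for instance: $A$ is finitely built from shifts of its cohomology modules $H^iA$, so $M$ is finitely built from the bounded objects $H^iA\otimes_{H^0A}^\mrm{L}\bar M$) you are essentially running the paper's flat-dimension argument in disguise. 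Your second step, the cell-by-cell lifting of a finite free resolution together with a compatible map to $M$, is where the substantive work would live: the lifted differentials satisfy $d^2=0$ only modulo lower-degree terms of $A$, so an obstruction-theoretic correction is needed at each stage while simultaneously maintaining the map to $M$; carrying this out is exactly the content the paper delegates to \cite[Theorem~5.11]{Yekutieli}. A minor point: the proposition does not assume $H^0A$ noetherian, so phrasing things via $\msf{D}^\mrm{b}_\mrm{f}$ is not quite appropriate.
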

\begin{proof}
The forward implication is evident since the restriction of scalars along $A \to H^0A$ preserves sums. For the reverse implication, since $H^0A \otimes_A^\mrm{L} M$ is compact it has finite flat dimension and hence $M$ also has finite flat dimension by~\cite[Theorem 4.1]{Shaul18}. Since $M$ has finite flat dimension and $A$ has finite amplitude, $M \simeq A \otimes_A^\mrm{L} M$ has finite amplitude. By~\cite[Theorem 5.11]{Yekutieli} it follows that $M$ is compact.
\end{proof}

In view of the above, it is natural to ask the following questions:
\begin{que}\label{que:proxy}
Given a DG-module $M \in \msf{D}(A)$ such that $H^0A \otimes_A^{\mrm{L}} M$ is proxy-small (resp., virtually-small) in $\msf{D}(H^0A)$,
does it follow that $M$ is proxy-small (resp., virtually-small) in $\msf{D}(A)$?
\end{que}

\begin{que}\label{que:finite-build}
Given two DG-modules $M,N \in \msf{D}(A)$ such that $H^0A \otimes_A^{\mrm{L}} M$ finitely builds $H^0A \otimes_A^{\mrm{L}} N$ in $\msf{D}(H^0A)$,
does it follow that $M$ finitely builds $N$ in $\msf{D}(A)$?
\end{que}

We remark that Question~\ref{que:finite-build} has a positive answer if both $M$ and $N$ are compact by Theorem~\ref{thm:consequencescompacts}(\ref{item:fbuild}). 
\begin{prop}
If Question~\ref{que:finite-build} has a positive answer, then Question~\ref{que:proxy} has a positive answer.
\end{prop}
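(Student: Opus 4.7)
The plan is to promote a witness $W' \in \msf{D}(H^0A)^\omega$ for proxy-smallness (resp.\ virtual-smallness) of $H^0A \otimes_A^\mrm{L} M$ to a witness $W \in \msf{D}(A)^\omega$ for $M$ itself. The key observation is that once one has a compact $W \in \msf{D}(A)^\omega$ whose reduction $H^0A \otimes_A^\mrm{L} W$ generates the same thick subcategory of $\msf{D}(H^0A)^\omega$ as $W'$, the two defining conditions of proxy-smallness for $M$ transfer: the finite-building condition follows from the assumed positive answer to Question~\ref{que:finite-build}, and the building condition from Corollary~\ref{cor:suppdeterminesbuilding}(1).

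To produce such a $W$, I would invoke the bijection of Theorem~\ref{thm:consequences}(\ref{item:hzthick}) between thick subcategories of $\msf{D}(A)^\omega$ and of $\msf{D}(H^0A)^\omega$, applied to $\mrm{Thick}_{\msf{D}(H^0A)^\omega}(W')$. This yields a thick subcategory $\msf{S} \subseteq \msf{D}(A)^\omega$ for which
\[
\mrm{Thick}_{\msf{D}(H^0A)^\omega}(W') = \mrm{Thick}_{\msf{D}(H^0A)^\omega}(H^0A \otimes_A^\mrm{L} X \mid X \in \msf{S}).
\]
Since thick closures are finitary, $W'$ is finitely built from finitely many $H^0A \otimes_A^\mrm{L} X_1, \ldots, H^0A \otimes_A^\mrm{L} X_n$ with $X_i \in \msf{S}$, and conversely each $H^0A \otimes_A^\mrm{L} X_i$ is finitely built from $W'$. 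Setting $W := X_1 \oplus \cdots \oplus X_n$ then gives a compact object whose reduction $H^0A \otimes_A^\mrm{L} W$ and $W'$ finitely build each other; moreover $W \ne 0$, since $W' \ne 0$ forces some $H^0A \otimes_A^\mrm{L} X_i$ to be nonzero, and the reduction is conservative by Proposition~\ref{prop:conservative}(\ref{item:up-cons}).

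With $W$ in hand the verification is short. Since $H^0A \otimes_A^\mrm{L} M$ finitely builds $W'$, which in turn finitely builds $H^0A \otimes_A^\mrm{L} W$, the hypothesis (Question~\ref{que:finite-build}) gives that $M$ finitely builds $W$ in $\msf{D}(A)$, settling the virtually-small case. In the proxy-small case, $W'$ additionally builds $H^0A \otimes_A^\mrm{L} M$, and $H^0A \otimes_A^\mrm{L} W$ builds $W'$ (same thick closure, hence the same localizing closure), so $H^0A \otimes_A^\mrm{L} W$ builds $H^0A \otimes_A^\mrm{L} M$, and Corollary~\ref{cor:suppdeterminesbuilding}(1) upgrades this to $W$ building $M$ in $\msf{D}(A)$.

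The principal obstacle is the construction of $W$: the bijection of Theorem~\ref{thm:consequences}(\ref{item:hzthick}) is doing essentially all the work of \emph{lifting $W'$ back to $\msf{D}(A)$} up to thick equivalence, which is exactly the level of granularity needed for proxy-smallness. (This is also where the ambient noetherian hypothesis on $A$ is being used.) Beyond that the argument is a purely formal chaining of (finite-)building relations.
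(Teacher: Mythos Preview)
Your argument is correct, but it takes a different route from the paper's. The paper avoids the thick-subcategory bijection entirely: by \cite[Propositions~4.4 and~4.5]{dwyer2004finiteness}, any witness for (proxy-/virtual-)smallness of $H^0A \otimes_A^\mrm{L} M$ in $\msf{D}(H^0A)$ may be replaced by a Koszul complex $K(H^0A;\mathbf{x})$ on a finite sequence $\mathbf{x}\subseteq H^0A$. Lifting $\mathbf{x}$ to $A$ and using base change gives $K(H^0A;\mathbf{x})\simeq H^0A\otimes_A^\mrm{L} K(A;\mathbf{x})$, so one already has an explicit compact lift $W=K(A;\mathbf{x})\in\msf{D}(A)^\omega$ with $H^0A\otimes_A^\mrm{L} W\simeq W'$ on the nose, not merely up to thick equivalence. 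The finite-building and building conditions then transfer exactly as in your final paragraph.

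The trade-off: the paper's approach is shorter and uses only the external input from \cite{dwyer2004finiteness} plus Question~\ref{que:finite-build}; in particular the virtually-small case needs nothing from the stratification machinery. Your approach instead leans on Theorem~\ref{thm:consequences}(\ref{item:hzthick}) (hence on stratification and the noetherian hypothesis on $A$) to manufacture the lift, and on Corollary~\ref{cor:suppdeterminesbuilding}(1) for the building step. Both are valid under the ambient hypotheses; the paper's is more self-contained, while yours illustrates that the thick-subcategory classification already encodes enough to lift witnesses abstractly. A minor point: your appeal to conservativity to see $W\neq 0$ is harmless but unnecessary, since $H^0A\otimes_A^\mrm{L} X_i\neq 0$ trivially forces $X_i\neq 0$.
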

\begin{proof}
According to~\cite[Proposition 4.5]{dwyer2004finiteness}, if $H^0A \otimes_A^{\mrm{L}} M$ is virtually-small,
then there exists a finite sequence $\mathbf{x}\subseteq H^0A$,
such that $H^0A \otimes_A^{\mrm{L}} M$ finitely builds the Koszul complex $K(H^0A;\mathbf{x})$.
By the base change property of the Koszul complex,
we have that $K(H^0A;\mathbf{x}) \simeq H^0A \otimes_A^\mrm{L} K(A;\mathbf{x})$.
Hence, if Question~\ref{que:finite-build} has a positive answer,
this implies that $M$ finitely builds the small DG-module $K(A;\mathbf{x})$,
so that $M$ is virtually-small. 
The corresponding result about proxy-smallness follows similarly, 
using~\cite[Proposition 4.4]{dwyer2004finiteness}.
\end{proof}

We will show that both of these questions have a negative answer.
Firstly we need some auxiliary results.

\begin{lem}\label{lem:proxy-field}
Let $\k$ be a field. Then any object of $\msf{D}(\k)$ is proxy-small (and hence also virtually-small).
\end{lem}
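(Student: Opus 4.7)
The strategy is to show that every nonzero object of $\msf{D}(\k)$ has $\k$ itself as a proxy-small witness. The key structural fact I would use is that, since $\k$ is a field, the abelian category of $\k$-modules is semisimple, and consequently every complex of $\k$-vector spaces is quasi-isomorphic to its cohomology. Thus any $M \in \msf{D}(\k)$ decomposes as
\[
M \;\simeq\; \bigoplus_{i \in \mathbb{Z}} H^iM[-i],
\]
where each $H^iM$ is itself a (possibly infinite) direct sum of copies of $\k$. So in fact $M$ is isomorphic in $\msf{D}(\k)$ to a direct sum of shifts of $\k$.

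Given nonzero $M$, I would choose an integer $i$ with $H^iM \neq 0$. Picking any nonzero element yields a copy of $\k$ as a direct summand of $H^iM$, and hence an inclusion $s\colon \k[-i] \hookrightarrow M$ together with a splitting $p\colon M \twoheadrightarrow \k[-i]$ with $ps = \mrm{id}$. Therefore $\k[-i]$ is a retract of $M$, so $\k[-i] \in \mrm{Thick}_{\msf{D}(\k)}(M)$, and since thick subcategories are closed under shifts, $\k \in \mrm{Thick}_{\msf{D}(\k)}(M)$; that is, $M$ finitely builds $\k$. Conversely, since $M$ is a coproduct of shifts of $\k$, it lies in $\mrm{Loc}_{\msf{D}(\k)}(\k)$, so $\k$ builds $M$.

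Finally, $\k = \1_{\msf{D}(\k)}$ is compact in $\msf{D}(\k)$, so it is a valid witness, and $M$ is proxy-small with witness $\k$. Since any proxy-small object is virtually-small, this also gives the second claim. I do not anticipate a real obstacle here: the only point that needs a moment of care is that even when $H^iM$ is infinite-dimensional, one still extracts $\k[-i]$ as a genuine retract (not merely as a summand of a finite direct sum), which is what justifies its membership in the thick subcategory generated by $M$.
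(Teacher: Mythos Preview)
Your argument is correct and follows essentially the same approach as the paper's proof: both exhibit a shift of $\k$ as a retract of any nonzero $M$ to see that $M$ finitely builds $\k$, and then observe that $\k$ builds $M$. You simply spell out in more detail why such a retract exists (via the decomposition $M \simeq \bigoplus_i H^iM[-i]$) and why $\k$ builds $M$, whereas the paper states these facts without elaboration.
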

\begin{proof}
Given any non-zero object $M\in \msf{D}(\k)$, there exists some shift of $\k$ which is a direct summand of $M$,
and hence $M$ finitely builds $\k$, while $\k$ builds $M$, so $M$ is proxy-small.
\end{proof}

\begin{lem}\label{lem:small}
Let $A\to B$ be a map of DG-rings, 
and suppose that $B$ is small as an object of $\msf{D}(A)$.
Given $M \in \msf{D}(B)$, we have that $M$ is small in $\msf{D}(B)$
if and only if $M$ is small in $\msf{D}(A)$.
\end{lem}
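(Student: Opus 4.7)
The plan is to split the biconditional and prove the two implications separately, since they have quite different characters: the forward direction is formal, while the reverse direction carries the substantive content.

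For the forward direction, suppose $M$ is small in $\msf{D}(B)$. Since $\msf{D}(B)$ is compactly generated by its tensor unit $B$, this is equivalent to $M \in \mrm{Thick}_{\msf{D}(B)}(B)$. Applying Lemma~\ref{lem:buildingdescends}, which says that finite building is preserved by restriction of scalars along $A \to B$, one obtains $M \in \mrm{Thick}_{\msf{D}(A)}(B)$. The hypothesis that $B$ is small in $\msf{D}(A)$ amounts to $B \in \mrm{Thick}_{\msf{D}(A)}(A)$, and so $\mrm{Thick}_{\msf{D}(A)}(B) \subseteq \mrm{Thick}_{\msf{D}(A)}(A) = \msf{D}(A)^\omega$, giving that $M$ is small in $\msf{D}(A)$.

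For the reverse direction, suppose $f_*M$ is small in $\msf{D}(A)$. The first observation is that the extension of scalars $f^* = B \otimes_A^{\mrm{L}} -\colon \msf{D}(A) \to \msf{D}(B)$ preserves compact objects, since it sends the compact generator $A$ of $\msf{D}(A)$ to the compact generator $B$ of $\msf{D}(B)$. Consequently $f^*f_*M = B \otimes_A^{\mrm{L}} M$ is automatically small in $\msf{D}(B)$, and the remaining task is to transport compactness from $f^*f_*M$ to $M$ itself along the counit $\epsilon_M\colon f^*f_*M \to M$ of the adjunction $f^*\dashv f_*$.

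The main obstacle will be this final transport: showing that $M$ itself lies in $\mrm{Thick}_{\msf{D}(B)}(B)$, not merely in its localizing closure. A natural strategy is to exploit the bar-type simplicial resolution $(f^*f_*)^{\bullet+1}M$ of $M$ arising from the comonad $f^*f_*$: each level is compact in $\msf{D}(B)$ by the previous paragraph, and one needs to argue that $M$ is built finitely from these levels (up to retracts), with the finiteness controlled by the length of an $A$-perfect resolution of $f_*M$. Establishing this finite semi-free $B$-presentation of $M$ is the technical heart of the argument.
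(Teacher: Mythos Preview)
Your forward direction is correct and is essentially equivalent to the paper's argument, though phrased differently: you use that $M\in\mrm{Thick}_{\msf{D}(B)}(B)$ and that restriction preserves thick subcategories, whereas the paper invokes the adjunction $\mrm{RHom}_A(M,-)\simeq\mrm{RHom}_B(M,\mrm{RHom}_A(B,-))$ and uses that $\mrm{RHom}_A(B,-)$ preserves coproducts (as $B$ is small in $\msf{D}(A)$). Both are standard and short.

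There is, however, a genuine problem with the reverse direction: the implication ``$M$ small in $\msf{D}(A)$ $\Rightarrow$ $M$ small in $\msf{D}(B)$'' is \emph{false} in general, so your bar-resolution strategy cannot succeed. Take $A=\k$ a field, $B=\k[\epsilon]/(\epsilon^2)$, and $M=\k=B/(\epsilon)$. Then $B$ is small in $\msf{D}(\k)$ (it is two-dimensional), and $f_*M=\k$ is small in $\msf{D}(\k)$, but $M=\k$ is not small in $\msf{D}(B)$ since $\k$ has infinite projective dimension over $B$ and hence does not lie in $\mrm{Thick}_{\msf{D}(B)}(B)$. In this example the augmented bar complex $(f^*f_*)^{\bullet+1}M$ is the infinite resolution $\cdots\to B\otimes_\k B\to B\to \k$, and there is no finite truncation that recovers $\k$ up to retracts; your proposed ``finiteness controlled by the length of an $A$-perfect resolution of $f_*M$'' fails here, since $f_*M=\k$ has a length-zero perfect resolution over $\k$ yet no finite stage of the bar construction suffices.

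The paper's one-line proof via the adjunction likewise only establishes the forward direction: the isomorphism $\mrm{RHom}_A(M,-)\simeq\mrm{RHom}_B(M,\mrm{RHom}_A(B,-))$ lets one test compactness of $M$ in $\msf{D}(A)$ using compactness in $\msf{D}(B)$, but not conversely, since not every object of $\msf{D}(B)$ lies in the essential image of $\mrm{RHom}_A(B,-)$. Fortunately only the forward direction is used in the paper (in the proof of Proposition~\ref{prop:proxy-from-small}), so nothing downstream is affected.
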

\begin{proof}
This follows from the adjunction $\mrm{RHom}_A(M,-) \simeq \mrm{RHom}_B(M,\mrm{RHom}_A(B,-)).$
\end{proof}

The next result is a DG version of \cite[Proposition 7.2]{dwyer2004finiteness}.
\begin{prop}\label{prop:proxy-from-small}
Let $A\to B$ be a map of DG-rings, 
and suppose that $B$ is small in $\msf{D}(A)$.
Given $M \in \msf{D}(B)$, if $M$ is proxy-small (resp., virtually-small) in $\msf{D}(B)$,
then $M$ is proxy-small (resp., virtually-small) in $\msf{D}(A)$.
\end{prop}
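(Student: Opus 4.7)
The plan is to show that the same witness that works for proxy-smallness (resp., virtual-smallness) of $M$ in $\msf{D}(B)$ continues to work in $\msf{D}(A)$, by transporting both the smallness condition and the building/cobuilding relations along the restriction of scalars functor. There are essentially three ingredients to check: that the witness remains small in $\msf{D}(A)$, that finite building and building relations descend to $\msf{D}(A)$, and that the non-zero condition is preserved.

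First I would take a witness $W \in \msf{D}(B)$ for the virtual-smallness (resp., proxy-smallness) of $M$ in $\msf{D}(B)$. So $W$ is a non-zero small object of $\msf{D}(B)$ such that $M$ finitely builds $W$ in $\msf{D}(B)$ (and, in the proxy-small case, $W$ also builds $M$ in $\msf{D}(B)$). Since $B$ is small in $\msf{D}(A)$ by hypothesis, Lemma~\ref{lem:small} immediately gives that $W$ is small in $\msf{D}(A)$. Next, Lemma~\ref{lem:buildingdescends} applied to the map $A \to B$ shows that $M$ finitely builds $W$ in $\msf{D}(A)$, and in the proxy-small case that $W$ also builds $M$ in $\msf{D}(A)$.

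The only remaining point is that $W$ (and $M$) remain non-zero after restriction of scalars, but this is clear since cohomology of a DG-module does not change along restriction of scalars, so $H^*W \neq 0$ as a graded $A$-module. Combining these observations, $W$ is a valid witness for the virtual-smallness (resp., proxy-smallness) of $M$ viewed as an object of $\msf{D}(A)$.

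There is no real obstacle here: the statement is essentially a formal consequence of Lemmas~\ref{lem:small} and~\ref{lem:buildingdescends}, with the small object serving as a witness whose relevant properties are all preserved by the restriction of scalars functor $\msf{D}(B) \to \msf{D}(A)$.
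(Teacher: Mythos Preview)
Your proof is correct and follows exactly the same approach as the paper: take a witness $W$ in $\msf{D}(B)$, use Lemma~\ref{lem:small} to see that $W$ remains small in $\msf{D}(A)$, and use Lemma~\ref{lem:buildingdescends} to transport the (finite) building relations along restriction of scalars. Your additional remark about $W$ remaining non-zero is a minor clarification the paper leaves implicit.
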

\begin{proof}
We prove the claim about proxy-smallness, as the claim about virtual-smallness is proved similarly.
Let $W \in \msf{D}(B)$ be a small object,
such that $M$ finitely builds $W$ and $W$ builds $M$ in $\msf{D}(B)$.
By Lemma~\ref{lem:small} the DG-module $W$ is small in $\msf{D}(A)$,
while by Lemma~\ref{lem:buildingdescends} the DG-module $M$ finitely builds $W$ 
and the DG-module $W$ builds $M$ in $\msf{D}(A)$, 
so $M$ is proxy-small in $\msf{D}(A)$.
\end{proof}

We are now ready to show that Questions~\ref{que:proxy} and ~\ref{que:finite-build} have a negative answer:
\begin{thm}\label{thm:contradiction}
There exists a commutative noetherian DG-ring $K$ with finite amplitude,
and an object $N \in \msf{D}^{\mrm{b}}_{\mrm{f}}(K)$ such that $N$ is not virtually-small (and hence not proxy-small) in $\msf{D}(K)$,
but $H^0K \otimes^{\mrm{L}}_K N$ is proxy-small (and hence virtually-small) in $\msf{D}(H^0K)$.
\end{thm}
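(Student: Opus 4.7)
The idea is to arrange $H^0K$ to be a field $\k$. Then for any object $N \in \msf{D}^{\mrm{b}}_{\mrm{f}}(K)$, the reduction $H^0K \otimes^{\mrm{L}}_K N$ lies in $\msf{D}(\k)$ and is automatically proxy-small by Lemma~\ref{lem:proxy-field}, so the second condition of the theorem comes for free. The task is therefore to exhibit such a $K$ together with an object $N \in \msf{D}^{\mrm{b}}_{\mrm{f}}(K)$ which is not virtually small in $\msf{D}(K)$.

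The plan is to produce such an $N$ by descent from a noetherian ring-level example, using Proposition~\ref{prop:proxy-from-small} contrapositively. Start with a commutative noetherian local ring $(R, \m, \k)$ and an object $N_0 \in \msf{D}^{\mrm{b}}_{\mrm{f}}(R)$ of finite length cohomology that is not virtually small in $\msf{D}(R)$. Let $\mathbf{x} = (x_1, \ldots, x_n)$ be a generating set for $\m$ and set $K := K(R; \mathbf{x})$, the Koszul DG-algebra. Then $K$ is a non-positive commutative noetherian DG-ring of amplitude at most $n$ with $H^0K = \k$, and $K$ is free of rank $2^n$ as an $R$-module, so that $R \to K$ realises $K$ as a small object of $\msf{D}(R)$. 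Setting $N := K \otimes^{\mrm{L}}_R N_0 \in \msf{D}(K)$ (with the evident $K$-action on the $K$-factor), the object $N$ lies in $\msf{D}^{\mrm{b}}_{\mrm{f}}(K)$ since $N_0$ has finite length cohomology and $K$ is $R$-flat of finite rank. Moreover, as an object of $\msf{D}(R)$ via restriction of scalars, $N$ lies in $\mrm{Thick}_{\msf{D}(R)}(N_0)$ since $K$ is a bounded complex of free $R$-modules. If $N$ were virtually small in $\msf{D}(K)$, then by Proposition~\ref{prop:proxy-from-small} it would be virtually small in $\msf{D}(R)$, and the inclusion $\mrm{Thick}_{\msf{D}(R)}(N) \subseteq \mrm{Thick}_{\msf{D}(R)}(N_0)$ would force $N_0$ itself to be virtually small in $\msf{D}(R)$, contradicting its choice.

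The main difficulty is to produce the initial pair $(R, N_0)$. The failure of proxy-smallness for non-locally-complete-intersection rings is by now well-understood thanks to Briggs-Iyengar-Letz-Pollitz, but the failure of virtual smallness is substantially more delicate: indeed, the residue field $\k$ is always virtually small over any noetherian local ring, witnessed by any Koszul complex on a system of parameters. A more exotic $N_0$ is therefore required, namely one for which $\mrm{Thick}_{\msf{D}(R)}(N_0)$ meets the subcategory $\msf{D}(R)^\omega$ of compact objects only in zero. Producing such an $N_0$ over an appropriately chosen non-Gorenstein local ring, via the structure of its category of singularities, is the delicate step of the argument.
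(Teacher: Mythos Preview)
Your overall strategy coincides with the paper's: take a Koszul complex $K = K(A;\mathbf{m})$ on generators of a maximal ideal of a non-complete-intersection noetherian ring $A$, so that $H^0K$ is a field and Lemma~\ref{lem:proxy-field} handles the second claim for free; then set $N = K \otimes_A^{\mrm{L}} M$ for a suitable non-virtually-small $M \in \msf{D}^{\mrm{b}}_{\mrm{f}}(A)$, and use Proposition~\ref{prop:proxy-from-small} together with $\mrm{Thick}_{\msf{D}(A)}(N) \subseteq \mrm{Thick}_{\msf{D}(A)}(M)$ to conclude that $N$ is not virtually small in $\msf{D}(K)$.

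Where you diverge from the paper is in your assessment of the ``delicate step.'' You impose the extra condition that $N_0$ have finite length cohomology, and then describe the construction of such an $N_0$ as requiring a subtle analysis of singularity categories over non-Gorenstein rings. Neither is necessary. First, the finite-length hypothesis is superfluous: each generator $x_i$ of $\m$ acts null-homotopically on the Koszul complex $K$, so $\m$ annihilates every $H^j(K\otimes_A^{\mrm{L}} M)$; since these cohomology groups are finitely generated over $A$ (as $K$ is finite free over $A$ and $M \in \msf{D}^{\mrm{b}}_{\mrm{f}}(A)$), they are automatically finite-dimensional over $H^0K = A/\m$, giving $N \in \msf{D}^{\mrm{b}}_{\mrm{f}}(K)$ with no condition on $M$ beyond $M \in \msf{D}^{\mrm{b}}_{\mrm{f}}(A)$. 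Second, and more importantly, Pollitz's theorem \cite[Theorem~5.4]{pollitz2019derived} already says that a commutative noetherian ring $A$ is locally a complete intersection if and only if every nonzero object of $\msf{D}^{\mrm{b}}_{\mrm{f}}(A)$ is \emph{virtually} small; so choosing any $A$ that is not locally CI immediately yields the required $M$. The paper simply cites this and refers to \cite{briggs2020constructing} for explicit examples. Your worry that only proxy-smallness is understood, and that virtual smallness needs a separate delicate argument, is misplaced.
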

\begin{proof}
Let $A$ be any commutative noetherian ring which is not a locally complete intersection ring.
By~\cite[Theorem 5.4]{pollitz2019derived}, this implies that there exists some non-zero $M \in \msf{D}^{\mrm{b}}_{\mrm{f}}(A)$,
such that $M$ is not virtually-small (and hence, also not proxy-small).
For concrete examples of such $A$ and $M$, see \cite{briggs2020constructing}.

Since $M \in \msf{D}^{\mrm{b}}_{\mrm{f}}(A)$,
the small support and big support of $M$ coincide (see Remark~\ref{rem:bigvssmall}), and are equal to a non-empty closed subset of $\mrm{Spec}(A)$.
In particular, there exists some maximal ideal $\m$ in the support of $M$.
Let $\mathbf{m}$ be a finite sequence of elements in $A$ that generates $\m$,
and let $K=K(A;\mathbf{m})$ be the Koszul complex on this sequence.
Then $K$ is a commutative noetherian DG-ring with finite amplitude.

Set $N = K\otimes^{\mrm{L}}_A M$. 
We claim that $N$ is not virtually-small (and hence not proxy-small) in $\msf{D}(K)$,
but that $H^0K \otimes^{\mrm{L}}_K N$ is proxy-small (and hence virtually-small) in $\msf{D}(H^0K)$.
The latter follows from Lemma~\ref{lem:proxy-field} since $H^0K=A/\m$ is a field.
To see that $N$ is not virtually-small in $\msf{D}(K)$,
note that if it were, since $K$ is small over $A$,
this would imply by Proposition~\ref{prop:proxy-from-small} that $N = K\otimes^{\mrm{L}}_A M$ is virtually-small in $\msf{D}(A)$.
Since $A$ finitely builds $K$, it would follow that $M$ finitely builds $N = K \otimes_A^\mrm{L} M$, 
which shows $M$ is virtually-small, and this would be a contradiction.
\end{proof}

\section{Connective ring spectra}
In this section we apply our descent results for (co)stratification to the derived category of a connective ring spectrum $R$. 

Given a connective ring spectrum $R$ (that is, one for which $\pi_i R = 0$ for $i < 0$) we write $\msf{D}(R)$ for its homotopy category of $R$-modules. The zeroth homotopy $\pi_0R$ is a ring, and there is a map of ring spectra $R \to \pi_0R$. Note that we implicitly view the ordinary ring $\pi_0R$ as a ring spectrum via the Eilenberg-MacLane functor $H$; since there is a symmetric monoidal equivalence of derived categories $\msf{D}(H\pi_0R) \simeq \msf{D}(\pi_0R)$ this does no harm; see~\cite{SchwedeShipley} and~\cite{Shipley} for more details. If $R$ is commutative, then so is $\pi_0R$, and $\msf{D}(R)$ is a tensor-triangulated category which is rigidly-compactly generated by $R$. The extension of scalars functor $f^* = \pi_0R \otimes_R -$ is a geometric functor, and we have $f_*$ is restriction of scalars, and $f^{(1)} = \mrm{Hom}_R(\pi_0R, -)$. Note that these constructions are implicitly derived. 

\begin{lem}\label{lem:linearsupport}
The extension of scalars $\pi_0R \otimes_R -\colon \msf{D}(R) \to \msf{D}(\pi_0R)$ is a canonical $\pi_0R$-linear functor, and $\mrm{supp}_R(R) = \mrm{supp}_{R}(\pi_0R)$.
\end{lem}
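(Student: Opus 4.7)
The plan is to mimic the proofs of Lemma~\ref{lem:linear} and Lemma~\ref{lem:eq-support} verbatim in the spectral setting. Both assertions are formal consequences of the framework established in the earlier sections, once one observes that $\msf{D}(R)$ and $\msf{D}(\pi_0R)$ satisfy Hypothesis~\ref{hyp} (which is standard for a connective commutative ring spectrum whose $\pi_0$ is commutative).

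For the $\pi_0R$-linearity, I would first note that both $\msf{D}(R)$ and $\msf{D}(\pi_0R)$ carry canonical $\pi_0R$-actions: on $\msf{D}(\pi_0R)$ this is tautological, and on $\msf{D}(R)$ the action is given by the inclusion $\pi_0R \hookrightarrow \pi_{-*}R = \mrm{End}^*_{\msf{D}(R)}(R)$ in degree zero, which visibly factors through the endomorphism ring of the unit. To verify the compatibility triangle in Definition~\ref{defn:compatible} for each $M \in \msf{D}(R)$, I would apply $\pi_0$ to the commutative square of ring spectra
\[
\begin{tikzcd}
R \arrow[r] \arrow[d] & \pi_0R \arrow[d] \\
\mrm{Hom}_R(M,M) \arrow[r, "\pi_0R \otimes_R -"'] & \mrm{Hom}_{\pi_0R}(\pi_0R \otimes_R M, \pi_0R \otimes_R M),
\end{tikzcd}
\]
exactly as in the proof of Lemma~\ref{lem:linear}.

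For the support equality, I would follow Lemma~\ref{lem:eq-support}. The inclusion $\mrm{supp}_R(R) \subseteq \mrm{supp}_R(\pi_0R)$ follows from Lemma~\ref{lem:commutewithL}: since $\Gamma_\p$ and $L_\p$ commute with restriction of scalars along $R \to \pi_0R$, one has $\mrm{supp}_R(\pi_0R) = \mrm{supp}_{\pi_0R}(\pi_0R) = \mrm{Spec}(\pi_0R)$, and $\mrm{supp}_R(R) \subseteq \mrm{Spec}(\pi_0R)$ trivially. Conversely, canonicality of the $\pi_0R$-action on $\msf{D}(R)$ makes $\Gamma_\p$ and $L_\p$ smashing, so $\Gamma_\p L_\p \pi_0R \simeq \Gamma_\p L_\p R \otimes_R \pi_0R$, and $\p \notin \mrm{supp}_R(R)$ therefore forces $\p \notin \mrm{supp}_R(\pi_0R)$.

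There is no substantial obstacle here; the only point requiring any attention is the confirmation that $\msf{D}(R)$ genuinely fits into Hypothesis~\ref{hyp} and that the action of $\pi_0R$ on $\msf{D}(R)$ is canonical in the sense of~\cite{BIK08}, both of which are standard.
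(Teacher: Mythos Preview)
Your proposal is correct and follows exactly the approach indicated in the paper, which simply states that the proof is analogous to those of Lemma~\ref{lem:linear} and Lemma~\ref{lem:eq-support}. You have spelled out precisely those two arguments in the spectral setting, so there is nothing to add.
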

\begin{proof}
The proof is analogous to the proofs of Lemmas~\ref{lem:linear} and~\ref{lem:eq-support}.
\end{proof}

\begin{thm}\label{thm:connstrat}
Let $R$ be a connective commutative ring spectrum with $\pi_0R$ noetherian. Then $\msf{D}(R)$ is stratified and costratified by the canonical action of $\pi_0R$ if and only if $\pi_0R$ builds $R$ in $\msf{D}(R)$.
\end{thm}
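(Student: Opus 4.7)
The plan is to deduce this theorem directly from the descent machinery developed earlier, applied to the extension-of-scalars functor $f^* = \pi_0R \otimes_R -\colon \msf{D}(R) \to \msf{D}(\pi_0R)$, in exact analogy with the proof of Theorem~\ref{thm:main}. By the discussion preceding the theorem, $f^*$ fits into an adjoint triple $f^* \dashv f_* \dashv f^{(1)}$ where $f_*$ is restriction of scalars along $R \to \pi_0R$ and $f^{(1)} = \mrm{Hom}_R(\pi_0R,-)$, and since both $\msf{D}(R)$ and $\msf{D}(\pi_0R)$ are rigidly-compactly generated by their tensor units, Hypothesis~\ref{hyp} is satisfied. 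Moreover, Lemma~\ref{lem:linearsupport} tells us that $f^*$ is a canonical $\pi_0R$-linear functor and that $\mrm{supp}_R(R) = \mrm{supp}_R(\pi_0R) = \mrm{supp}_R(f_*\1_{\msf{D}(\pi_0R)})$, so in particular $\mrm{supp}_{\msf{D}(R)}(\1_{\msf{D}(R)}) \subseteq \mrm{supp}_{\msf{D}(\pi_0R)}(\1_{\msf{D}(\pi_0R)})$.

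The second input is that $\msf{D}(\pi_0R)$ is stratified and costratified by the canonical action of the (ordinary) noetherian ring $\pi_0R$; this is due to Neeman~\cite{Neeman1992, Neeman2011}, already invoked in the proof of Theorem~\ref{thm:main}. One transfers this fact from $\msf{D}(\pi_0R)$ viewed as a module category over the Eilenberg--MacLane spectrum $H\pi_0R$ by invoking the symmetric monoidal equivalence $\msf{D}(H\pi_0R) \simeq \msf{D}(\pi_0R)$ of Shipley~\cite{Shipley}, which is mentioned in the setup and is what makes the $\pi_0R$-action on $\msf{D}(\pi_0R)$ agree on both sides.

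With these two ingredients in hand, the theorem follows immediately by applying Corollary~\ref{cor:equiv-stra} and Corollary~\ref{cor:equiv-costra} to $f^*$: the condition $f_*\1_{\msf{D}(\pi_0R)}$ builds $\1_{\msf{D}(R)}$ is, unwinding the adjunction, precisely the statement that $\pi_0R$ builds $R$ in $\msf{D}(R)$, and the two corollaries then give the desired equivalence with stratification and costratification of $\msf{D}(R)$ by the action of $\pi_0R$.

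There is no real obstacle here; the theorem is essentially a formal consequence of the general descent results together with the classical stratification of $\msf{D}(\pi_0R)$. The only slightly delicate point is to make sure that all the data on the spectrum side really matches the data on the Eilenberg--MacLane side (canonicity of the action, identification of the small support, the adjoint triple), but these are all either recorded in Lemma~\ref{lem:linearsupport} or are standard consequences of Shipley's equivalence.
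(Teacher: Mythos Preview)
Your proposal is correct and follows essentially the same route as the paper: invoke Neeman's (co)stratification of $\msf{D}(\pi_0R)$ (transferred via Shipley's equivalence), then apply Corollaries~\ref{cor:equiv-stra} and~\ref{cor:equiv-costra} using Lemma~\ref{lem:linearsupport} to verify the linearity and support hypotheses. The paper's proof is just a terser version of what you wrote.
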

\begin{proof}
The derived category $\msf{D}(\pi_0R)$ is stratified and costratified by $\pi_0R$ by~\cite[Theorem 2.8]{Neeman1992} and~\cite[Corollary 2.8]{Neeman2011}. (More precisely, here we are using~\cite[Proposition 8.3]{BIK11} and the equivalence $\msf{D}(H\pi_0A) \simeq \msf{D}(\pi_0A)$ described above.) The result then follows from Lemma~\ref{lem:linearsupport}, together with Corollaries~\ref{cor:equiv-stra} and~\ref{cor:equiv-costra}.
\end{proof}

\begin{rem}
Since one can view non-positive (cohomologically graded) DG-rings as connective ring spectra, the previous theorem can be viewed as a generalization of Theorem~\ref{thm:main}. 
\end{rem}

If one is in the situation of Theorem~\ref{thm:connstrat}, then there are multiple consequences; instead of listing them here, we direct the reader to Theorems~\ref{thm:consequencessupp},~\ref{thm:consequencescompacts} and~\ref{thm:consequencescosupp} where we listed the consequences in the setting of a non-positive DG-ring. The consequences in the setting of connective ring spectra are completely analogous to those in the setting of non-positive DG-rings.

\begin{cor}\label{cor:nilpotent}
Let $R$ be a connective commutative ring spectrum with $\pi_0R$ noetherian. 
If any homogeneous element of $\pi_*R$ which does not have degree $0$ is nilpotent,
then $\msf{D}(R)$ is stratified and costratified by the canonical action of $\pi_*R$ if and only if $\pi_0R$ builds $R$ in $\msf{D}(R)$.
\end{cor}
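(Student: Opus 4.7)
The plan is to reduce the statement about the action of $\pi_*R$ to the already-established statement about the action of $\pi_0R$ given in Theorem~\ref{thm:connstrat}, and the bridge between the two is Proposition~\ref{prop:strat-rzero}.

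First I would observe that $\pi_*R$ is a graded-commutative noetherian ring (noetherianity of the positively graded piece follows from $\pi_0R$ being noetherian together with the nilpotence hypothesis, which forces all homogeneous elements of nonzero degree to square to zero and in particular to generate a nilpotent ideal). The subring of degree-zero homogeneous elements of $\pi_*R$ is precisely $\pi_0R$, and the hypothesis of the corollary is exactly the hypothesis required to apply Proposition~\ref{prop:strat-rzero} to the ring $\pi_*R$ with $R^0 = \pi_0R$.

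Next, Proposition~\ref{prop:strat-rzero} yields that $\msf{D}(R)$ is stratified (respectively, costratified) by the canonical action of $\pi_*R$ if and only if $\msf{D}(R)$ is stratified (respectively, costratified) by the canonical action of $\pi_0R$ obtained by restricting the action along the inclusion $\pi_0R \hookrightarrow \pi_*R$. Finally I would invoke Theorem~\ref{thm:connstrat}, which states that under the noetherianity assumption on $\pi_0R$, the latter is equivalent to $\pi_0R$ building $R$ in $\msf{D}(R)$. Chaining the two equivalences gives the claim.

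There is no real obstacle here: the proof is essentially a two-line combination of the two previously-established results, and the only thing to check is the minor bookkeeping that the canonical $\pi_*R$-action on $\msf{D}(R)$ restricts along $\pi_0R \hookrightarrow \pi_*R$ to the canonical $\pi_0R$-action used in Theorem~\ref{thm:connstrat}, which is immediate from the construction of these actions via the unit maps $\pi_*R \to \mrm{End}^*_{\msf{D}(R)}(\1)$ and $\pi_0R \to \mrm{End}^0_{\msf{D}(R)}(\1)$.
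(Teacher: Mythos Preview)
Your approach is exactly the paper's: combine Proposition~\ref{prop:strat-rzero} with Theorem~\ref{thm:connstrat}. The paper's own proof is a single sentence citing these two results.

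One caveat: your parenthetical justification that $\pi_*R$ is noetherian is not right. Nilpotent does not mean square-zero, and even if every nonzero-degree homogeneous element were square-zero, the ideal they generate need not be nilpotent (consider $\k[x_1,x_2,\dots]/(x_i^2)$ with the $x_i$ in various nonzero degrees), so noetherianity does not follow from the stated hypotheses alone. The paper's proof simply does not address this point; one should read noetherianity of $\pi_*R$ as an implicit standing hypothesis, since the BIK framework (and hence the very phrase ``stratified by the canonical action of $\pi_*R$'') requires the acting ring to be noetherian. Apart from this, your argument is correct and matches the paper.
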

\begin{proof}
This follows from Proposition~\ref{prop:strat-rzero} and Theorem~\ref{thm:connstrat}.
\end{proof}

\bibliographystyle{plain}
\bibliography{references}
\end{document}